\documentclass[a4, 12pt,
]{amsart}


\usepackage{comment}

\usepackage[scr=rsfs,cal=euler]{mathalfa}

\usepackage{ulem} 
\usepackage[dvipdfmx]{graphicx,xcolor} 
\usepackage[dvipdfmx]{hyperref} 
\usepackage[backrefs, alphabetic, initials]{amsrefs}


\usepackage{tikz}
\usepackage[all]{xy}
\usepackage{tikz-cd}

\usepackage{color}
\usepackage{xcolor}
\usepackage{amssymb, bm, bbm}
\usepackage{hyperref}
\hypersetup{
	colorlinks=true,
	linkcolor=red,
	citecolor=magenta}

\newtheorem{theo}{Theorem}[section]
\newtheorem{prop}[theo]{Proposition}
\newtheorem{lemm}[theo]{Lemma}
\newtheorem{cor}[theo]{Corollary}
\newtheorem{claim}[theo]{Claim}

\newtheorem{conj}[theo]{Conjecture}

\numberwithin{equation}{section}

\theoremstyle{definition}
\newtheorem{defi}[theo]{Definition}
\newtheorem{ex}[theo]{Example}

\newtheorem{setup}[theo]{Setting}

\theoremstyle{remark}
\newtheorem{rem}[theo]{Remark}





\newcommand{\Supp}[0]{\operatorname{Supp}}

\newcommand{\rank}[0]{\operatorname{rank}}
\newcommand{\codim}[0]{\operatorname{codim}}
\newcommand{\Sym}[0]{\operatorname{Sym}}

\newcommand{\ddbar}{dd^c}

\newcommand{\e}{\varepsilon}

\newcommand{\OX}{\mathcal{O}}

\newcommand{\reg}{{\rm{reg}}}
\newcommand{\sing}{{\rm{sing}}}
\newcommand{\pr}{{\rm{pr}}}
\newcommand{\id}{{\rm{id}}}

\newcommand{\cF}{\mathcal{F}}

\newcommand{\ZZ}{\mathbb{Z}}
\newcommand{\CC}{\mathbb{C}}
\newcommand{\QQ}{\mathbb{Q}}
\newcommand{\RR}{\mathbb{R}}
\newcommand{\PP}{\mathbb{P}}

\newcommand{\cal}[1]{\mathcal{#1}}

\makeatletter
\newcommand*{\da@rightarrow}{\mathchar"0\hexnumber@\symAMSa 4B }
\newcommand*{\da@leftarrow}{\mathchar"0\hexnumber@\symAMSa 4C }
\newcommand*{\xdashrightarrow}[2][]{%
  \mathrel{%
    \mathpalette{\da@xarrow{#1}{#2}{}\da@rightarrow{\,}{}}{}%
  }%
}
\newcommand{\xdashleftarrow}[2][]{%
  \mathrel{%
    \mathpalette{\da@xarrow{#1}{#2}\da@leftarrow{}{}{\,}}{}%
  }%
}
\newcommand*{\da@xarrow}[7]{%
  \sbox0{$\ifx#7\scriptstyle\scriptscriptstyle\else\scriptstyle\fi#5#1#6\m@th$}%
  \sbox2{$\ifx#7\scriptstyle\scriptscriptstyle\else\scriptstyle\fi#5#2#6\m@th$}%
  \sbox4{$#7\dabar@\m@th$}%
  \dimen@=\wd0 %
  \ifdim\wd2 >\dimen@
    \dimen@=\wd2 %
  \fi
  \count@=2 %
  \def\da@bars{\dabar@\dabar@}%
  \@whiledim\count@\wd4<\dimen@\do{%
    \advance\count@\@ne
    \expandafter\def\expandafter\da@bars\expandafter{%
      \da@bars
      \dabar@
    }%
  }%
  \mathrel{#3}%
  \mathrel{%
    \mathop{\da@bars}\limits
    \ifx\\#1\\%
    \else
      _{\copy0}%
    \fi
    \ifx\\#2\\%
    \else
      ^{\copy2}%
    \fi
  }%
  \mathrel{#4}%
}
\makeatother
\usepackage{geometry}
\geometry{left=30mm,right=30mm,top=43mm,bottom=43mm}





\begin{document}

\title[Compact K\"ahler three-folds with nef anti-canonical bundle ]
{Compact K\"ahler three-folds \\ with nef anti-canonical bundle }

\author{Shin-ichi MATSUMURA}
\address{Mathematical Institute, Tohoku University,
6-3, Aramaki Aza-Aoba, Aoba-ku, Sendai 980-8578, Japan.}
\email{{\tt mshinichi-math@tohoku.ac.jp}}
\email{{\tt mshinichi0@gmail.com}}

\author{Xiaojun WU}
\address{Laboratoire Math\'ematiques et Interactions J.A. Dieudonn\'e,
Universit\'e C$\rm{\check{o}}$te d'Azur, Nice, 06108, France}
\email{{\tt xiaojun.wu@uni-bayreuth.de}}
\email{\tt xiaojun.wu@univ-cotedazur.fr}{}

\date{\today, version 0.01}

\renewcommand{\subjclassname}{%
\textup{2020} Mathematics Subject Classification}
\subjclass[2020]{Primary 32J25, Secondary 53C25, 14E30.}

\keywords
{K\"ahler spaces,
Structure theorems,
Nef anti-canonical bundles,
Minimal Model Programs,
$\mathbb{Q}$-conic bundles,
Albanese maps,
Orbifold structures.
}

\maketitle

\begin{abstract}
In this paper, we prove that a non-projective compact K\"ahler three-fold with nef anti-canonical bundle is,
up to a finite \'etale cover, one of the following:
a manifold with vanishing first Chern class; the product of a K3 surface and the projective line; or a projective space bundle over a $2$-dimensional torus.
This result extends Cao-H\"oring's structure theorem for projective manifolds
to compact K\"ahler manifolds in dimension $3$.
For the proof, we investigate the Minimal Model Program for compact K\"ahler three-folds with nef anti-canonical bundles by using the positivity of direct image sheaves, $\mathbb{Q}$-conic bundles, and orbifold vector bundles.
\end{abstract}

\tableofcontents

\section{Introduction}\label{Sec1}

\subsection{Background and the main results}\label{subsec1-1}

In this paper, we study a structure theorem for compact K\"ahler manifolds with nef anti-canonical bundle,
motivated by the conjecture below.
This conjecture serves as a natural extension of the pioneering studies
for manifolds with nef tangent bundles \cite{DPS94}  and manifolds with non-negative holomorphic bisectional curvatures  \cite{HSW81, Mok88}.
The studies of manifolds with nef anti-canonical bundles  have advanced through
interactions with other studies of  ``non-negatively curved'' varieties
(e.g.,\,see \cite{Mat20, Mat22, Mat22b, HIM22} for recent developments).

\begin{conj}\label{conj-main}
Let $X$ be a compact K\"ahler manifold with the nef anti-canonical bundle  $-K_{X}$.
Then, there exists a fibration $\varphi\colon  X \to Y$ with the following$:$
\begin{itemize}
\item $\varphi\colon  X \to Y$ is a locally constant fibration$;$
\item $Y$ is a compact K\"ahler manifold with $c_{1}(Y)=0$$;$
\item $F$, which is the fiber of $\varphi\colon  X \to Y$,  is rationally connected.
\end{itemize}
\end{conj}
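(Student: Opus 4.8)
The plan is to let $\varphi\colon X \dashrightarrow Y$ be the MRC (rational quotient) fibration of $X$ and to establish the three bullet points in turn, the third absorbing essentially all of the work. As a preliminary reduction I would make $\varphi$ into a genuine morphism with smooth base: running the Kähler Minimal Model Program (in dimension $3$, after Höring–Peternell) and using that $-K_X$ is nef to keep the anti-canonical bundle under control through the contractions and flips, one reaches a birational model on which the rational quotient is holomorphic and the base $Y$ is smooth — a point, a curve, or (in dimension $3$) a surface. The fiber $F$ is rationally connected by construction, so the third bullet holds automatically, and it remains to analyze $Y$ and the way $X$ sits over it.

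To get $c_1(Y)=0$, I would first show $-K_Y$ is nef. Writing $-K_{X/Y} = -K_X + \varphi^{*}K_Y$ and pushing forward suitable (pluri-)relative anti-canonical sheaves, the positivity of direct image sheaves in the Kähler category converts the nefness of $-K_X$ into nefness of $-K_Y$. Since $Y$ is the base of the MRC fibration it is not uniruled, so $K_Y$ is pseudo-effective by Brunella's theorem (equivalently the Kähler analogue of the BDPP theorem, available in dimension $3$); together with $-K_Y$ nef this forces $K_Y\equiv 0$. Abundance in the relevant dimension then makes $K_Y$ torsion, i.e. $c_1(Y)=0$, and after a finite étale cover the Beauville–Bogomolov-type decomposition gives $Y\cong T\times Z$ with $T$ a torus and $Z$ simply connected with $c_1(Z)=0$; replacing $X$ and $Y$ by these covers does not affect the statement.

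The heart of the proof is that $\varphi$ is a \emph{locally constant} fibration, and I would argue separately over the two factors of $Y=T\times Z$. Over the torus factor one invokes the structure of the Albanese map: for a compact Kähler manifold with nef anti-canonical bundle the Albanese map is itself a locally constant fibration, and this pins down the $T$-direction of $\varphi$. Over the simply connected factor $Z$ the family of rationally connected fibers must be isotrivial — this is the technical core of the Cao–Höring argument, combining the (anti-)positivity of direct images with properties of families of rationally connected manifolds and the triviality of $\pi_{1}(Z)$ — and an isotrivial such family over a base with $c_1=0$ is then a holomorphic fiber bundle with locally constant transition data. In dimension $3$ the fiber over $Z$ is a $\PP^{1}$, so the task reduces to showing that the $\PP^{1}$-fibration (conic bundle) produced by the MMP is the projectivization $\PP(E)$ of a rank-two bundle $E$ that is projectively numerically flat — via semistability and the vanishing of the relevant Chern classes forced by $-K_X$ nef, in the spirit of Simpson and Demailly–Peternell–Schneider — hence projectively flat. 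On the simply connected $Z$ this makes $E$ trivial, so the $Z$-factor of $X$ is literally a product with $\PP^{1}$ (the $\mathrm{K3}\times\PP^{1}$ case), while over $T$ the flat bundle can be nontrivial (the $\PP(E)$-over-a-torus case). The point that the $\PP^{1}$-fibration may only be a $\QQ$-conic bundle over a base with orbifold singularities is exactly what is handled by passing to the orbifold base and using orbifold vector bundles, on which the numerical-flatness argument and its flat (orbifold-)structure make sense.

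I expect this third step to be the principal obstacle, and in full generality it is open: one must rule out non-isotrivial families of rationally connected varieties under a nef anti-canonical constraint, and then promote isotriviality to a fiber bundle arising from an honest representation of $\pi_{1}(Y)$. The arguments above are what make this work in dimension $3$, where the rationally connected fibers are curves or rational surfaces and the conic-bundle and orbifold machinery is available. A secondary obstacle is that several inputs to the first two steps — abundance, the Brunella/Kähler-BDPP pseudo-effectivity theorem, and the local triviality of the Albanese map — are presently established only in low dimensions, so even the reductions are unconditional only up to dimension $3$.
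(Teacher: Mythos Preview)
The statement is a \emph{conjecture}; the paper does not prove it in general, only in dimension $3$ (Theorem \ref{theo-main}, via Theorem \ref{theo-non-proj}). You correctly recognize this and your outline is aimed at that case. The broad shape --- run the K\"ahler MMP to reach a Mori fiber space $X'\to S$, show $c_1(S)=0$, then split according to whether the base is a torus or a K3 surface --- matches the paper. But several of the steps you treat as routine are in fact the substance of the paper, and a couple of your reductions are not correct as stated.

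First, you assert that after the MMP ``the base $Y$ is smooth''. It need not be: the MFS $\varphi\colon X'\to S$ is only a $\QQ$-conic bundle and $S$ can have $A_{m-1}$ quotient singularities (Corollary \ref{push-forward-sing}). This is precisely why the paper develops the orbifold vector bundle machinery (Subsection \ref{subsec2-4}, Theorem \ref{thm-flat}); your passing mention of orbifolds does not substitute for it. Relatedly, your route to $c_1(Y)=0$ via ``positivity of direct images gives $-K_Y$ nef'' is not what the paper does and is problematic because after the MMP $-K_{X'}$ is no longer nef --- the paper controls this by showing the non-nef locus is not dominant over $S$ (Lemma \ref{lemma-cohomology-compare}, Proposition \ref{prop-intersection}) and then uses the explicit conic-bundle pushforward formula $\varphi_*(c_1(K_X)^2)=-4c_1(K_S)-c_1(\Delta)$ (Proposition \ref{push-forward-formula}) together with pseudo-effectivity of $K_S$ to force $\Delta=0$ and $c_1(K_S)=0$.

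Second, and most seriously, you never explain why the locally constant fibration lives on $X$ rather than on the birational model $X'$. The conjecture is about $X$, so one must prove that the MMP was vacuous, i.e.\ that $X\dashrightarrow X'$ is an isomorphism. This occupies a large portion of the paper's proof (Claims \ref{claim-a1}--\ref{claim-a3} and \ref{claim-b1}--\ref{claim-b4}): one rules out flips and the two types of divisorial contraction one step at a time, using intersection-number inequalities derived from Proposition \ref{prop-intersection} and, in the simply connected case, a delicate analysis involving a quasi-\'etale base change and Lemma \ref{divisorial_con}. Your sketch also invokes ``the Albanese map is itself a locally constant fibration'' as an input over the torus factor, but in the K\"ahler setting that statement is essentially what is being proved here; the paper instead shows directly that $S\cong A(X)$ and that $X'\cong\PP(\mathcal{F})$ over it, and only then argues $X=X'$.
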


The notion of locally constant fibrations (e.g.,\,see \cite[Definition 2.3]{MW})
is stronger than that of locally trivial fibrations.
However, in this paper, readers unfamiliar with locally constant fibrations may regard them as locally trivial fibrations,
except for  Proposition \ref{prop-main}.

Conjecture \ref{conj-main} has been addressed by the theory of holonomy groups \cite{CDP15, DPS96}
under the stronger assumption that $-K_{X}$ is semi-positive
(i.e.,\,it admits a smooth Hermitian metric with semi-positive curvature).
Nevertheless, the nef case presents significantly greater challenges than semi-positive case,
just as it was highly non-trivial to generalize the structure theorem from manifolds with non-negative holomorphic bisectional curvatures to those with nef tangent bundles.
In the case where $X$ is a projective manifold, Cao-H\"oring solved the conjecture in \cite{Cao19, CH19}
(see \cite{CCM21, MW, Wan20} for projective klt pairs),
but their proofs require an ample line bundle on $X$;
therefore, we cannot at least directly apply their proofs to compact K\"ahler manifolds.

This paper aims to solve Conjecture \ref{conj-main} in the case of $\dim X = 3$ (see Theorem \ref{theo-main}).
Theorem \ref{theo-main} follows directly from Theorem \ref{theo-non-proj},
as detailed in Proposition \ref{prop-main}.
Thus, this paper focuses on proving Theorem \ref{theo-non-proj}.

\begin{theo}\label{theo-main}
Conjecture \ref{conj-main} is true in the case of $\dim X = 3$.
\end{theo}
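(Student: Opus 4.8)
The plan is to split according to whether $X$ is projective, invoking Cao--H\"oring in the projective case and Theorem~\ref{theo-non-proj} in the non-projective one. If $X$ is a projective manifold with $-K_{X}$ nef, then $X$ carries an ample line bundle and Cao--H\"oring's structure theorem \cite{Cao19, CH19} applies without change: a suitable morphism realizing the maximal rationally connected fibration of $X$ is a locally constant fibration $\varphi\colon X\to Y$ onto a smooth projective base $Y$ that is not uniruled and has nef anticanonical bundle --- hence $c_{1}(Y)=0$ --- with rationally connected fibers. This step is not new, so from now on I assume $X$ is not projective.

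By Theorem~\ref{theo-non-proj} there is a finite \'etale cover $\pi\colon\widetilde{X}\to X$, which we may take Galois with group $G$ (its Galois closure remains of the listed type), such that $\widetilde{X}$ is one of: \emph{(i)} a manifold with $c_{1}=0$; \emph{(ii)} a product $S\times\PP^{1}$ with $S$ a $K3$ surface; \emph{(iii)} a projective bundle $p\colon\PP(E)\to T$ over a $2$-dimensional complex torus $T$. In case (i) the pullback $-K_{\widetilde{X}}$ is numerically trivial, so $K_{X}$ is itself numerically trivial (the pullback map on $N^{1}_{\RR}$ is injective for the finite morphism $\pi$); thus $c_{1}(X)=0$ and $\varphi=\mathrm{id}_{X}$ already satisfies Conjecture~\ref{conj-main} with point fibers. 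In cases (ii) and (iii), $\widetilde{X}$ carries the evident fibration $\widetilde{\varphi}\colon\widetilde{X}\to\widetilde{Y}$ --- the projection to $S$, respectively the bundle map $p$ --- which is locally constant; the only point to check is case (iii), where nef-ness of $-K_{\PP(E)}$ together with $p$-ampleness of $-K_{\PP(E)/T}$ and $c_{1}(T)=0$ forces (by a positivity argument of the kind already used for Theorem~\ref{theo-non-proj}) the bundle to be projectively flat, i.e.\ $p$ arises from a representation $\pi_{1}(T)\to\operatorname{PGL}_{k+1}(\CC)$.

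It remains to descend $\widetilde{\varphi}$ along $\pi$ in cases (ii) and (iii); this is Proposition~\ref{prop-main}. Since $\widetilde{Y}$ is not uniruled and the fiber ($\PP^{1}$, resp.\ $\PP^{k}$) is rationally connected, $\widetilde{\varphi}$ is a morphism representing the maximal rationally connected fibration of $\widetilde{X}$, a construction functorial in $\widetilde{X}$; hence $\widetilde{\varphi}$ is $G$-equivariant for a $G$-action on $\widetilde{Y}$. Crucially, $G$ acts \emph{freely} on $\widetilde{Y}$: if a nontrivial $g\in G$ fixed a point $y\in\widetilde{Y}$, it would act biholomorphically and without fixed points on the fiber over $y$, impossible since every automorphism of a complex projective space has a fixed point. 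Therefore $Y:=\widetilde{Y}/G$ is a compact K\"ahler manifold, \'etale-covered by $\widetilde{Y}$, with $c_{1}(Y)=0$, and $\widetilde{\varphi}$ descends to a fibration $\varphi\colon X\to Y$ whose fibers are still $\PP^{1}$ (resp.\ $\PP^{k}$). That $\varphi$ is again locally constant follows from the behaviour of locally constant fibrations under finite \'etale (quotient) maps, as recorded in \cite[\S 2]{MW}. This proves Theorem~\ref{theo-main}, assuming Theorem~\ref{theo-non-proj}.

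The substantive work is thus entirely in Theorem~\ref{theo-non-proj} itself, whose proof occupies the rest of the paper: one runs the Minimal Model Program for compact K\"ahler three-folds with nef anticanonical bundle, controls the Mori fibre spaces that arise --- chiefly $\mathbb{Q}$-conic bundles over surfaces and $\mathbb{Q}$-Fano contractions over curves --- and combines the positivity of direct image sheaves with the theory of orbifold vector bundles to pin down the base and the fibration. By comparison the dichotomy and descent above are formal; the one place demanding genuine care is the bookkeeping for locally constant (rather than merely locally trivial) fibrations in Proposition~\ref{prop-main}, which is the expected minor obstacle here.
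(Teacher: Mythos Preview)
Your construction of $\varphi\colon X\to Y$ via a Galois quotient differs from the paper's Proposition~\ref{prop-main}, which instead observes that the MRC foliation $\mathcal{F}\subset T_X$ is regular (a consequence of Theorem~\ref{theo-non-proj} applied to the cover) and invokes \cite[Corollary~2.11]{Hor07} to produce a smooth morphism $\varphi\colon X\to Y$ with $T_{X/Y}=\mathcal{F}$ directly on $X$. The paper then establishes that $\varphi$ is locally constant by applying Theorem~\ref{theo-flat} to the $\varphi$-ample bundle $-K_X$ to obtain numerically flat direct images, and concluding via the criterion \cite[Proposition~2.5]{MW}.

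Your argument has a genuine gap at the descent step. You assert that because $\widetilde\varphi\colon\widetilde X\to\widetilde Y$ is locally constant and $\widetilde Y\to Y$ is finite \'etale, the quotient $\varphi\colon X\to Y$ is again locally constant, citing \cite[\S2]{MW}; but no such descent statement appears there---\cite[Proposition~2.5]{MW} is a direct-image criterion, not a result about \'etale quotients. The descent is not formal: writing $\widetilde X\cong (Y^{\Unv}\times F)/\pi_1(\widetilde Y)$ and lifting the $G$-action, the resulting $\pi_1(Y)$-action on $Y^{\Unv}\times F$ has the form $(z,t)\mapsto(\gamma z,\psi_\gamma(z)\cdot t)$ with $\psi_\gamma\colon Y^{\Unv}\to\operatorname{Aut}(F)$ holomorphic, and one must show each $\psi_\gamma$ is constant. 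In case~(ii) this follows since $Y^{\Unv}=S$ is a compact K3 and $\operatorname{PGL}_2(\CC)$ is affine, but in case~(iii) one has $Y^{\Unv}=\CC^2$, and constancy requires a further argument you do not supply. A clean repair is to note that $-K_X$ is $\varphi$-ample and that $\varphi_*(-pK_X)$ pulls back along $\widetilde Y\to Y$ to the numerically flat sheaf $\widetilde\varphi_*(-pK_{\widetilde X})$; since numerical flatness descends along finite \'etale covers, one may then apply \cite[Proposition~2.5]{MW} on $X$ itself---which is exactly the paper's endgame.
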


\begin{theo}\label{theo-non-proj}
Let $X$ be a non-projective compact K\"ahler three-fold with nef anti-canonical bundle.
Then $X$ admits a finite \'etale cover that is one of the following$:$
\begin{itemize}
 \item a  compact K\"ahler manifold with vanishing first Chern class$;$
 \item the product of a  K3 surface and the projective line $\PP^1$$;$
 \item the projective space bundle $\PP(\mathcal{F})$ of a numerically flat vector bundle $\mathcal{F}$ of rank $2$
 over a $2$-dimensional $($compact complex$)$ torus.
\end{itemize}
\end{theo}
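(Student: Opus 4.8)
The plan is to run the Minimal Model Program (MMP) for $X$ and analyze the possible outcomes, using non-projectivity to rule out most cases. Since $-K_X$ is nef and $X$ is a compact K\"ahler three-fold, by the recent MMP for K\"ahler three-folds (H\"oring--Peternell et al.), either $K_X$ is nef---in which case $K_X \equiv 0$ (as $-K_X$ is nef) and the Beauville--Bogomolov decomposition after a finite \'etale cover gives the first case---or there is a $K_X$-negative extremal contraction $\pi\colon X \to X'$. The key observation is that, because $-K_X$ is nef, any such extremal ray is ``$K$-trivial on curves not contracted'' in a controlled way; more precisely, one shows $-K_X$ restricts trivially or positively and the contraction cannot be a divisorial contraction that worsens positivity. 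I would first classify the extremal contractions: if $\dim X' = 3$ (divisorial), if $\dim X' = 2$ (conic bundle / $\mathbb{Q}$-conic bundle), if $\dim X' = 1$ (del Pezzo fibration), and if $\dim X' = 0$ (Fano three-fold, which forces $X$ projective, contradicting our hypothesis).

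Second, I would use non-projectivity aggressively. A Fano three-fold is projective, so the $\dim X' = 0$ case is excluded. If the contraction is birational, then since projectivity of $X'$ would typically pull back (or the exceptional locus is uniruled and one can argue via Nakayama/Koll\'ar that $X$ is projective iff $X'$ is, in this setting), one expects to reduce to the case where $X'$ is again non-projective K\"ahler with $-K_{X'}$ nef, and induct on, say, the Picard number or the number of MMP steps; eventually one reaches a Mori fiber space structure $\varphi\colon X_{\min} \to Y$ with $\dim Y \in \{1,2\}$ and $X_{\min}$ still non-projective (since $X_{\min}$ maps to $X$... actually one should track that a non-projective model survives). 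The heart is then: classify non-projective K\"ahler $X$ admitting a Mori fiber space with nef anticanonical bundle. If $\dim Y = 2$: $\varphi$ is a $\mathbb{Q}$-conic bundle over a K\"ahler surface $Y$; nefness of $-K_X$ forces (via the canonical bundle formula $-K_X = -\varphi^* K_Y - \varphi^* \Delta + (\text{relatively ample})$ type argument, plus positivity of the direct image $\varphi_* \mathcal{O}(-K_X)$ along the lines of Cao--H\"oring) that $-K_Y$ is nef on a K\"ahler surface, so $Y$ is a torus, K3, Enriques, or bielliptic, or rational/ruled; non-projectivity of $X$ eliminates the surfaces whose conic bundles are projective and pins $Y$ down to a $2$-torus (the K3 / bielliptic / Enriques bases force $X$ projective or reduce to the $c_1 = 0$ case), and then a smoothness/flatness argument upgrades the $\mathbb{Q}$-conic bundle to an honest $\mathbb{P}^1$-bundle $\mathbb{P}(\mathcal{F})$, with $\mathcal{F}$ numerically flat because $-K_X$ nef $\iff$ $\det\mathcal{F}$ and the relative tangent positivity force $\mathcal{F}$ and $\mathcal{F}^*$ both nef. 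If $\dim Y = 1$: $\varphi$ is a del Pezzo fibration over a curve $Y$; nefness of $-K_X$ and non-projectivity force $Y$ elliptic and the general fiber to be $\mathbb{P}^1 \times \mathbb{P}^1$ or a del Pezzo whose total space stays non-projective---but a fibration over an elliptic curve with del Pezzo surface fibers and $-K_X$ nef is handled by passing to a finite \'etale cover trivializing the monodromy, which (by the orbifold/locally constant fibration machinery) gives either a product $S \times E$ reducing to $c_1 = 0$, or lands inside the $\mathbb{P}(\mathcal{F})$-over-torus case after a further contraction; and the case where the fiber is $\mathbb{P}^2$ or $\mathbb{P}^1\times\mathbb{P}^1$ over an elliptic curve similarly collapses, with the product of a K3 and $\mathbb{P}^1$ appearing precisely when the ``rationally connected part'' is $\mathbb{P}^1$ and the base of the full fibration is a K3.

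The main obstacle I expect is the birational step---showing that a $K_X$-negative divisorial contraction $X \to X'$ preserves both non-projectivity and nefness of the anti-canonical bundle of the target in the singular K\"ahler category, so that the induction on the MMP actually terminates at a non-projective Mori fiber space rather than getting stuck. In the projective case Cao--H\"oring have the ample bundle to control everything; here one must instead use the K\"ahler MMP's existence and termination (which is only available in dimension $3$, hence the dimension restriction) together with the fact that $-K_X$ nef is preserved under $K_X$-MMP steps (a lemma presumably established earlier in the paper), and delicately argue that the special non-projective K\"ahler class on $X$ descends. A secondary difficulty is the upgrade from $\mathbb{Q}$-conic bundle to genuine $\mathbb{P}^1$-bundle with numerically flat $\mathcal{F}$: one needs the base to be smooth (a $2$-torus, so fine) and the total space to be smooth, then rule out degenerate ($\mathbb{Q}$-conic) fibers using that $-K_X$ is nef and $X$ is smooth over a surface---an Euler-characteristic / discriminant argument---after which $\mathbb{P}(\mathcal{F})$ being ``$\pm$ Fano in fibers with $-K$ nef'' forces $\mathcal{F}$ to be an extension of numerically flat line bundles, hence numerically flat by the characterization of numerically flat bundles on tori.
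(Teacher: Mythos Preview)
Your overall architecture is recognizable, but there are three concrete gaps that the paper handles very differently and that your proposal does not resolve.

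\textbf{Nefness of $-K$ is not preserved by the MMP.} You write that ``$-K_X$ nef is preserved under $K_X$-MMP steps (a lemma presumably established earlier in the paper).'' It is not. The paper explicitly flags this as the first real difficulty: after running the MMP to $X \dashrightarrow X'$, the class $-K_{X'}$ need not be nef. What the paper proves instead (Lemma~\ref{lemma-cohomology-compare}, Proposition~\ref{prop-intersection}) is a much weaker statement: $c_1(-K_{X'}) + \varepsilon\{\omega'\}$ is represented by a positive current that is smooth off the exceptional locus, so the non-nef locus of $-K_{X'}$ is not dominant over $S$. This is then fed into a twisted direct-image positivity result (Proposition~\ref{prop-direct}, Theorem~\ref{theo-flat}) that tolerates singular metrics. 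Your step-by-step induction on MMP contractions, with nefness carried along, does not go through.

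\textbf{The case split is wrong, and the K3 case is mishandled.} Non-projectivity kills $\dim R(X)\in\{0,1\}$ by elementary Hodge theory: if $\dim R(X)=1$ then $h^0(X,\Omega_X^2)=0$, hence $h^2(X,\mathcal O_X)=0$ and $X$ is projective. So there is no del Pezzo fibration case at all; your entire $\dim Y=1$ discussion is vacuous. Conversely, you dismiss the K3 base with ``the K3 \ldots\ bases force $X$ projective or reduce to the $c_1=0$ case,'' but this is exactly where the $K3\times\mathbb P^1$ outcome lives. The paper's actual split is by the irregularity after an \'etale cover: if $q>0$ the MFS base $S$ is shown to be the Albanese torus itself (hence smooth), while if $q=0$ the base $S$ is a normal K3 surface which may be \emph{singular}, and the paper needs orbifold vector bundle theory (Theorem~\ref{thm-flat}, Corollary~\ref{cor-flat}) to handle $\varphi_*(pL)$ over a singular base. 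Your proposal assumes the base is smooth and never confronts this.

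\textbf{You never undo the birational map.} The MMP gives a bimeromorphic map $X \dashrightarrow X'$, not an isomorphism, and your plan only classifies $X'$. The paper spends Claims~\ref{claim-a1}--\ref{claim-a3} and \ref{claim-b1}--\ref{claim-b4} proving, via intersection-number computations against the modified-nef current $T_\varepsilon$, that the last MMP step $X_{N-1}\dashrightarrow X_N$ can be neither a flip nor a divisorial contraction, hence $X\cong X'$. This is not automatic and is where much of the work lies; ``track that a non-projective model survives'' is not enough, since the issue is recovering the original smooth $X$, not merely the K\"ahler class.
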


\subsection{Strategy of the proof of Theorem \ref{theo-non-proj}}\label{subsec1-2}

In this subsection, we outline the proof of  Theorem \ref{theo-non-proj}.
Let $X$ be a non-projective compact K\"ahler three-fold with nef anti-canonical bundle
and let $\varphi\colon   X \dashrightarrow R(X)$ be an MRC (maximally rationally connected) fibration of $X$
(see \cite{KoMM92, Cam92} for MRC fibrations).
The candidates of $X$ in Theorem \ref{theo-non-proj} are determined by $\varphi\colon   X \dashrightarrow R(X)$.

We initially verify that it suffices to consider the case of $\dim R(X)=2$.
In the case of $\dim R(X)=0$, the manifold $X$ is rationally connected, and hence projective.
In the case of $\dim R(X)=1$, a rationally connected fiber $F$ of $\varphi\colon   X \dashrightarrow R(X)$
has no non-zero holomorphic differential forms.
As a result, we obtain $h^{2}(X, \mathcal{O}_{X})=h^{0}(X, \Omega_{X}^{2})=0$ from $\dim R(X)=1$,
implying that $X$ is projective.
In the case of $\dim R(X)=3$, the manifold $X$ is non-uniruled;
hence $K_X$ is pseudo-effective.
This follows from \cite{BDPP} for projective manifolds of any dimension
and from \cite[Corollary 1.2]{Bru06}  for compact K\"ahler manifolds of dimension $\leq 3$.
Consequently, the nefness of $-K_X$ implies that $c_{1}(X)=c_{1}(K_{X})=0$.

We now revisit  Cao-H\"oring's proof \cite{CH19}, which shows
that a projective manifold $X$ admits a locally constant MRC fibration.
For simplicity, we suppose that $\varphi\colon  X \dashrightarrow R(X)$ is
an everywhere-defined holomorphic map onto a smooth projective variety $R(X)$.
The essence of the proof involves constructing a $\varphi$-ample line bundle  $B$ on $X$
such that the direct image sheaf $\varphi_{*}(pB):=\varphi_{*}\mathcal{O}_{X}(pB)$
is weakly positively curved
and satisfies that $c_{1}(\varphi_{*}(pB))=0$ for $1 \ll p \in \mathbb{Z}$
(see Subsections \ref{subsec2-1} and \ref{subsec4-1} for details).
Then, Simpson's result \cite{Sim92} confirms that $\varphi_{*}(pB)$ admits a flat connection,
which implies that $X \to R(X)$ is a locally constant fibration.

Now, let us return to the case where $X$ is non-projective.
A  significant  challenge arises in this context: $X$ might not have a $\varphi$-ample line bundle,
even if $\varphi\colon X \dashrightarrow R(X)$ is a holomorphic map.
Our idea to overcome this challenge is to apply  the Minimal Model Program (MMP) 
for compact K\"ahler three-folds,
as developed in \cite{HP15a, HP15b, HP16}.
Note that  the MMP approach has been previously treated in  \cite{BP04, PS98}.
By running the MMP, we can find $X \dashrightarrow X' \to Z$,
where  $X \dashrightarrow X'$ is a composition of divisorial contractions and flips
and $\varphi\colon  X' \to S$ is an MFS (Mori fiber space) (see Theorem  \ref{3-fold-MMP} for details).
An advantage of running the MMP is that $-K_{X'}$ is $\varphi$-ample by construction;
thus, we can expect that Cao-H\"oring's proof works for $\varphi\colon  X' \to S$.

In considering $\varphi\colon  X' \to S$ instead of $\varphi\colon  X \dashrightarrow R(X)$,
we face the new difficulties compared to Cao-H\"oring's argument:
The first difficulty is that $-K_{X'}$ is not necessarily nef.
To overcome this difficulty, based on an observation in \cite{EIM},
we focus on the fact that the non-nef locus of $-K_{X'}$ is not dominant over $S$
(see Subsection \ref{subsec3-3}),
which enables us to treat our situation as in the case where $-K_{X'}$ is nef.
Thus, we can construct a $\varphi$-ample line bundle $B$ on $X'$
such that $\varphi_{*}(pB)$ is  weakly positively curved
and satisfies that $c_{1}(\varphi_{*}(pB))=0$.
The second difficulty is that $S$ may have singularities,
which prevents us from obtaining a flat connection on $\varphi_{*}(pB)$.
To overcome this difficulty, we observe that $\varphi\colon  X' \to S$ is a toroidal $\mathbb{Q}$-conic bundle
and $\varphi_{*}(pB)$ is an orbifold vector bundle on $S$,
which enables us to obtain a flat connection.

To implement the above ideas, we actually need to divide the situation into two cases. 
In Subsection \ref{subsec4-2}, we consider the case where $X$ is not simply connected.
In this case, we focus on the Albanese map $\alpha\colon  X \to A(X)$ after taking a finite \'etale cover of $X$
(cf.\,\cite{NW}).
Each step in the MMP contracts rational curves and $A(X)$ has no rational curve;
hence we can find a morphism $\beta\colon S \to A(X)$.
Then, comparing  $S$ to $A(X)$,
we show that the MFS $\varphi\colon  X' \to S$ is actually a conic bundle.
Then, by studying conic bundles in the non-projective setting,
we deduce that $\varphi\colon  X' \to S$ is a projective space bundle and $X \dashrightarrow X'$ is isomorphic.
In  Subsection \ref{subsec4-3}, we treat the remaining case where $X$ is simply connected.
In this case, by applying the theory of orbifold vector bundles,
we show that after taking the base change by an appropriate finite quasi-\'etale cover,
the MFS $\varphi \colon  X' \to S$ is a locally constant fibration,
which implies that $X \dashrightarrow X'$ is isomorphic.

\subsection*{Notation and Conventions}\label{subsec-not}

We use the terms ``Cartier divisors,'' ``invertible sheaves,''  and ``line bundles'' interchangeably,
and adopt the additive notation for tensor products
(e.g.,\,$L+M:=L\otimes M$ for line bundles $L$ and $M$).
Additionally, we  use the terms  ``locally free sheaves'' and ``vector bundles'' interchangeably,
and often refer to singular Hermitian metrics simply as ``metrics.''
The term ``fibrations'' refers to a proper surjective morphism with connected fibers,
the term ``analytic varieties'' refers to an irreducible and reduced complex analytic space,
the term ``K\"ahler spaces'' refers to a normal analytic variety admitting a K\"ahler form
(i.e.,\,a smooth positive $(1,1)$-form on $X$ with local potential).

\subsection*{Acknowledgment}\label{subsec-ack}
The authors would like to thank Prof.\,Patrick Graf for providing a simpler proof of Lemma \ref{lem-topo}
than the original one in a draft of this paper.
They also express their gratitude to an anonymous referee who carefully reviewed the draft,
offered numerous constructive suggestions, and pointed out an error in Subsection \ref{subsec4-3} of the draft.
The first author was partially  supported
by Grant-in-Aid for Scientific Research (B) $\sharp$21H00976
and Fostering Joint International Research (A) $\sharp$19KK0342 from Japan Society for the Promotion of Science.
The second author was supported by
DFG Projekt ``Singul\"are hermitianische Metriken f\"ur Vektorb\"undel und Erweiterung kanonischer Abschnitte,''  managed by Prof.\,Mihai P\u{a}un.

\section{Preliminary Results}\label{Sec2}

\subsection{Bott-Chern cohomology groups on normal analytic varieties}\label{subsec2-2}

In this subsection, following \cite{BEG},
we review Bott-Chern cohomology groups and positive currents on normal analytic varieties.

Let $X$ be a normal analytic variety.
A pluriharmonic function on  $X$ can be locally written as the real part of a holomorphic function,
in other words, the kernel of the $\partial \overline{\partial}$-operator acting on the sheaf of distributions of bidegree $(0,0)$
coincides with the sheaf $\RR \OX_X$ of real parts of holomorphic functions
(e.g.,\,see \cite[Lemma 4.6.1]{BEG}).
Then, the {\textit{Bott-Chern cohomology group}} of $X$ is defined by
$$
H^{1,1}_{BC}(X, \CC):=H^1(X, \RR \OX_X).
$$
The first Chern class $c_1(L) \in H^{1,1}_{BC}(X, \CC)$ of a line bundle $L$ on $X$  is defined
by the Bott-Chern cohomology class of $(\sqrt{-1}/2\pi )\Theta_{h}(L)$,
where $\Theta_{h}(L)$ denotes the Chern curvature of a smooth Hermitian metric $h$ on $L$.
Note that $c_1(L)$ does not depend on the choice of smooth Hermitian metrics and
the first Chern class of $\QQ$-Cartier divisors can also be defined by linearity.
There exists the natural morphism $H^{1,1}_{BC}(X, \CC) \to H^2(X, \RR)$
such that the first Chern class of line bundles coincides with the topological definition,
where $H^2(X, \RR)$ denotes the singular cohomology of $X$ (e.g.,\,see \cite[Page 230]{BEG}).

The proposition below is often used as an extension theorem for positive currents
representing Bott-Chern cohomology classes (see \cite{Dem85} for currents on analytic varieties).
The lemma below is a generalization of the support theorem to analytic varieties.

\begin{prop}[{\cite[Proposition 4.6.3]{BEG}}] \label{BEG}
Let $\alpha \in H^{1,1}_{BC}(X, \CC)$ be a Bott-Chern cohomology class on a normal analytic variety $X$, and let $T$
be a positive current on $X_\reg$ representing the restricted class $\alpha|_{X_\reg} \in H^{1,1}_{BC}(X_{\reg}, \CC)$.
Then, the current $T$ is uniquely extended to the positive current with local potential on $X$ representing $\alpha \in H^{1,1}_{BC}(X, \CC)$.
\end{prop}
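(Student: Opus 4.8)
The plan is to reduce the statement to a local extension problem across the singular locus, using crucially that normality of $X$ forces $\codim_X X_\sing \ge 2$, and then to apply a Riemann-type extension theorem for plurisubharmonic functions. First I would fix a locally finite Stein cover $\{U_i\}$ of $X$, chosen fine enough that $\alpha$ is represented by a \v{C}ech cocycle $(h_{ij})$ with $h_{ij}\in\RR\OX_X(U_i\cap U_j)$, that $\{U_i\cap X_\reg\}$ is a Leray cover for $\RR\OX$ on $X_\reg$, and that each $U_i\cap U_j$ is connected and simply connected. Since $T$ represents $\alpha|_{X_\reg}$, it is then locally of the form $T=\deldel\varphi_i$ on $U_i\cap X_\reg=U_i\setminus X_\sing$ with $\varphi_i-\varphi_j=h_{ij}$ on $(U_i\cap U_j)\setminus X_\sing$, and positivity of $T$ makes each $\varphi_i$ plurisubharmonic on $U_i\setminus X_\sing$.

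The heart of the argument is the extension across $X_\sing$. As $X$ is normal, $U_i\setminus X_\sing$ is the complement in $U_i$ of an analytic set of codimension $\ge 2$, so by the classical Riemann-type extension theorem for plurisubharmonic functions on normal complex spaces, $\varphi_i$ extends to a unique plurisubharmonic function $\tilde\varphi_i$ on $U_i$. On each overlap, $\tilde\varphi_i-h_{ij}$ and $\tilde\varphi_j$ are plurisubharmonic on $U_i\cap U_j$ and agree on the dense subset $(U_i\cap U_j)\setminus X_\sing$; since a plurisubharmonic function is determined by its restriction to the complement of any pluripolar (in particular analytic) subset, they agree everywhere, i.e.\ $\tilde\varphi_i-\tilde\varphi_j=h_{ij}$. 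Hence $\deldel\tilde\varphi_i=\deldel\tilde\varphi_j$ on overlaps, so the local currents $\deldel\tilde\varphi_i$ glue to a positive current $\tilde T$ with local potential on $X$; by construction its Bott-Chern class is $[(\tilde\varphi_i-\tilde\varphi_j)]=[(h_{ij})]=\alpha$ and $\tilde T|_{X_\reg}=T$, which gives existence. For uniqueness, if $T_1,T_2$ are two such extensions with plurisubharmonic local potentials $\psi^{(1)}_i,\psi^{(2)}_i$ on $U_i$ inducing the same cocycle $(h_{ij})$, then on $U_i\setminus X_\sing$ the difference $p_i:=\psi^{(1)}_i-\psi^{(2)}_i$ is pluriharmonic; applying the extension theorem to the plurisubharmonic functions $\pm p_i$ and comparing with the extension of $0$ shows that $p_i$ extends to a pluriharmonic function on $U_i$, whence $\psi^{(1)}_i-p_i$ is plurisubharmonic on $U_i$ and agrees with $\psi^{(2)}_i$ off $X_\sing$, hence everywhere, so $T_1=T_2$ locally and therefore globally.

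I expect the main obstacle to be precisely the extension step across $X_\sing$: this is where normality is indispensable, since it is exactly the bound $\codim_X X_\sing\ge 2$ that brings Riemann-type extension theorems into play, and one must be careful that the plurisubharmonic potentials, a priori controlled only on $X_\reg$, are automatically locally bounded above near $X_\sing$ — which is the substantive content of those theorems on normal spaces (alternatively one may prefer to phrase this via a Skoda--El Mir type trivial-extension argument for the current $T$ itself, after checking that it has locally finite mass near $X_\sing$). The remaining steps are formal, relying only on the insensitivity of plurisubharmonic functions to pluripolar sets and on the gluing of local potentials.
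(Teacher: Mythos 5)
Your argument is, in substance, the standard proof of the cited result \cite[Proposition 4.6.3]{BEG}, which the paper itself does not reprove: write $T$ through local psh potentials, extend them across $X_\sing$ by the Grauert--Remmert extension theorem (normality gives $\codim X_\sing\geq 2$, so the upper bound near $X_\sing$ is automatic), glue using the pluriharmonic cocycle, and obtain uniqueness by extending the pluriharmonic difference of two potentials via $\pm$-psh extension; all of this is correct and is exactly the mechanism behind the quoted proposition. One technical requirement in your setup should be dropped, because it is both generally unachievable and unnecessary: you cannot in general arrange $\{U_i\cap X_\reg\}$ to be a Leray cover for $\RR\OX_X$ on $X_\reg$. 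For a normal isolated singularity $x\in X$ one typically has $H^1(U\setminus\{x\},\OX_X)\neq 0$ (e.g.\ the cone over an elliptic curve), and then $H^1(U\setminus\{x\},\RR\OX_X)\neq 0$ as well, no matter how small $U$ is. The fix is elementary: choose each $U_i$ Stein and contractible, so that $H^1(U_i,\RR\OX_X)=0$ and hence $\alpha|_{U_i}=0$; since the Bott--Chern class of $T|_{U_i\cap X_\reg}$ is the restriction of $\alpha|_{U_i}$, it vanishes, which gives a single potential $\varphi_i$ on all of $U_i\cap X_\reg$, and because the \v{C}ech $H^1$ of any cover injects into sheaf $H^1$, the two cocycles $(\varphi_i-\varphi_j)$ and $(h_{ij})$ differ by a coboundary of pluriharmonic functions $p_i$ on $U_i\cap X_\reg$, so after replacing $\varphi_i$ by $\varphi_i-p_i$ you get exactly $\varphi_i-\varphi_j=h_{ij}$. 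With that adjustment the remaining steps (extension across $X_\sing$, gluing, and the uniqueness argument) go through as you wrote them.
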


\begin{lemm} \label{support-lemma}
Let $X$ be an analytic variety, and let $T_{1}$, $T_{2}$ be $d$-closed positive currents of bidimension $(p,p)$
$($without assuming that they admit local potentials$)$.
If the support of the difference $T:=T_{1}-T_{2}$ is contained
in a Zariski closed subset  $A \subset X$ of dimension $< p$, then we have $T = 0$.
\end{lemm}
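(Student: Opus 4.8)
The plan is to deduce the vanishing from the classical support theorem for normal currents, after reducing to a smooth ambient manifold. Set $T := T_1 - T_2$. Since $T_1$ and $T_2$ are positive currents of bidimension $(p,p)$, each of them has order zero (a positive current has measure coefficients), hence so does $T$; together with $dT = dT_1 - dT_2 = 0$ this shows that $T$ is a $d$-closed current of order zero, i.e. a closed \emph{normal} current, of real dimension $2p$. The conclusion $T = 0$ is local on $X$, so I may assume that $X$ is a closed analytic subvariety of an open set $\Omega \subset \CC^N$. The closed embedding $\iota \colon X \hookrightarrow \Omega$ is proper, so pushing forward preserves order zero and commutes with $d$; moreover $\iota_* T$ is tested against $(p,p)$-forms on $\Omega$ through $\iota^*$, so it is again of bidimension $(p,p)$, and $\Supp(\iota_* T) \subset \iota(A) =: A$ is an analytic subset of $\Omega$ of dimension $q := \dim A < p$. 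Thus it suffices to prove: on a complex manifold, a closed normal current $T$ of bidimension $(p,p)$ with $\Supp(T)$ contained in an analytic subset $A$ of dimension $< p$ is zero.

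I would prove this by induction on $q = \dim A$, the case $A = \emptyset$ being trivial. Assume the claim for analytic sets of dimension $< q$, and let $A_\sing$ be the singular locus of $A$, an analytic subset of dimension $< q$. On the complex manifold $M := \Omega \setminus A_\sing$ the restriction $T|_M$ is still a closed normal current of bidimension $(p,p)$, and $\Supp(T|_M) \subset A_\reg := A \setminus A_\sing$, which is a complex submanifold of $M$ of real dimension $2q \le 2(p-1) < 2p$. By the support theorem for normal currents -- a normal current of dimension $m$ whose support lies in a $C^1$ submanifold of dimension strictly smaller than $m$ must vanish (see e.g. Federer's \emph{Geometric Measure Theory}; it is elementary, by testing in submanifold charts against forms $\e\,\eta(x_j/\e)\,\omega$ with $x_j$ a coordinate transverse to the submanifold and letting $\e\to 0$, using that $dT$ has locally bounded mass) -- we obtain $T|_M = 0$. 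Hence $\Supp(T) \subset A_\sing$, and the induction hypothesis applied to $T$ on $\Omega$ with the analytic set $A_\sing$ yields $T = 0$.

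The only point of substance is the invocation of the support theorem, and the key hypothesis making it applicable is that $T$ is \emph{normal}: for a current that is merely positive, or merely of order zero, no such vanishing holds -- e.g. $\delta_0\,\sqrt{-1}\,dz \wedge d\bar z$ on $\CC$ is a nonzero positive current of bidimension $(1,1)$ supported at a point. This is precisely why both hypotheses in the statement are needed: ``$d$-closed'' together with ``difference of two positive currents'' is exactly what upgrades $T$ to a closed normal current, and the explicit remark that local potentials are \emph{not} assumed signals that the argument must be run at the level of geometric measure theory rather than through $\ddbar$ of quasi-plurisubharmonic potentials.
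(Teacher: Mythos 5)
Your proof is correct and follows essentially the same route as the paper: localize, embed $X$ into an open subset of $\CC^{N}$, observe that $T$ (hence its pushforward under the proper embedding) is a $d$-closed normal current of bidimension $(p,p)$, and conclude by the support theorem on the smooth ambient space --- the only difference being that you supply a self-contained stratification-plus-Federer proof of that support theorem, which the paper simply cites from Demailly's book. (A minor aside: your closing counterexample $\delta_0\,\sqrt{-1}\,dz\wedge d\bar z$ on $\CC$ has bidimension $(0,0)$ rather than $(1,1)$, but this side remark does not affect the argument.)
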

\begin{proof}[Proof of Lemma \ref{support-lemma}]
The statement is local in $X$;
therefore, we may assume that there exists an embedding $i\colon  X \to B \subset \CC^N$ of $X$ into an open set $B \subset \CC^N $.
Since the pushforward $i_* T$ is a normal current (e.g.,\,see \cite[Chap.\,III, Section 2.C]{agbook} for the definition of normal currents),
the support theorem for smooth varieties shows that $i_* T=0$,
which implies that $T=0$.
\end{proof}

\subsection{Positivity of sheaves on normal analytic varieties}\label{subsec2-1}
In this subsection, following \cite{HPS18, PT18, Mat22},
we briefly review singular Hermitian metrics on torsion-free sheaves on normal analytic varieties.

Let $\cal{E}$ be a  torsion-free  coherent sheaf on a normal analytic variety $X$.
\textit{A singular Hermitian metric} $h$ on $\cal{E}$
is a possibly singular Hermitian  metric on the vector bundle $\mathcal{E}|_{X_{0}}$
(see \cite[Definition 17.1]{HPS18} and \cite[Definition 2.2.1]{PT18} for metrics on vector bundles).
Here $\mathcal{E}|_{X_{0}}$ is the restriction of $\mathcal{E}$ to $X_{0}:=X_{\reg} \cap X_\mathcal{E}$,
where  $X_{\reg}$ is the non-singular locus of $X$
and $X_\mathcal{E}$ is the maximal locally free locus of $\mathcal{E}$.
Note that $X_{0} \subset X$ is a Zariski open set with $\codim (X \setminus X_{0})\geq 2$.
For a smooth $(1,1)$-form $\theta$ on $X$ with local potential,
we write
$$
\sqrt{-1}\Theta_{h} \geq \theta \otimes {\rm{id}} \text{ on } X
$$
if  the function $\log |e|_{h^{*}}-f$ is psh for any local section $e $ of $\mathcal{E}^{*}$,
where $f$ is a local potential of $\theta$ (i.e.,\,$\theta=\sqrt{-1} \partial \overline{\partial} f$) and
$h^{*}$ is the induced metric on the dual sheaf
$\mathcal{E}^*:=\mathit{Hom} (\mathcal{E}, \mathcal{O}_{X})$.
The plurisubharmonicity can be extended through a Zariski closed set of codimension $\geq 2$;
therefore it suffices to check the plurisubharmonicity on an open set of $X_{0}$.

\begin{defi}\label{def-psef}
Let $X$ be a K\"ahler space with a  K\"ahler form $\omega_{X}$
and $\theta$ be a $(1,1)$-form on $X$ with local potential.
Let $\mathcal{E}$  be a torsion-free sheaf on $X$
\begin{itemize}
\item[$(1)$] $\mathcal{E}$ is said to be {\textit{$\theta$-positively curved}}
if $\mathcal{E}$ admits a singular Hermitian metric $h$ such that
$
\sqrt{-1}\Theta_{h} \geq \theta \otimes {\rm{id}} \text{ on } X.
$
\item[$(2)$]
$\mathcal{E}$  is said to be {\textit{$\theta$-weakly positively curved}}
if $\cal{E}$ admits singular Hermitian metrics $\{h_{\e}\}_{\e>0}$ 
such that $\sqrt{-1}\Theta_{h} \geq (\theta - \e \omega_{X}) \otimes {\rm{id}}$ on  $X$.
\item[$(3)$] $\mathcal{E}$ is simply said to be {\textit{positively curved}}
or {\textit{weakly positive curved}}  in the case of $\theta=0$.
\end{itemize}
\end{defi}

When $X$ is compact, the notion of weakly positively curved sheaves
does not depend on the choice of $\omega_{X}$ and
is stronger than the notion of pseudo-effective sheaves in the sense of \cite[Definition 2.1]{Mat22c}.

\subsection{On direct image sheaves for projective morphisms}\label{subsec4-1}

This subsection aims to prove Theorem \ref{theo-flat}.
For this purpose, we prepare the following proposition:

\begin{prop}[{cf.\,\cite[2.8 Proposition]{CH19}, \cite[Theorem 2.2 (1)]{CCM21}}]\label{prop-direct}
Let $\varphi\colon X\to Y$ be a fibration between $($not necessarily compact$)$ K\"ahler manifolds $X$ and $Y$
with the K\"ahler forms $\omega_{X}$ and $\omega_{Y}$.
Let $L$ be a line bundle on $X$ and $\theta$ be a $(1,1)$-form on $Y$ with local potential.
Assume the following conditions$:$
\begin{itemize}
\item[(a)]
The non-nef locus of $-K_{X/Y}$ is not dominant over $Y$ in the following sense$:$
$- K_{X/Y}$ has singular Hermitian metrics $\{g_{\delta}\}_{\delta>0}$ such that
$\sqrt{-1}\Theta_{g_{\delta}} \geq -\delta \omega _{X}$ holds on $X$
and  $\{x \in X\,|\, \nu(g_{\delta}, x)>0 \}$
is not dominant over $Y$,
where $\nu(g_{\delta}, x)$ is the Lelong number of a local potential of $g_{\delta}$ at $x$$;$

\item[(b)]
$L$ is a $\varphi$-big line bundle in the following sense$:$
$L$ has a singular Hermitian metric $g$ such that
$\sqrt{-1}\Theta_{g}+\varphi^{*}\omega_{Y} \geq \omega_{X}$ holds on $X$$;$

\item[(c)]
$L$ is $\varphi^{*}\theta$-weakly positively curved in the following sense$:$
$L$ has singular Hermitian metrics $\{h_{\delta'}\}_{\delta'>0}$
such that $\sqrt{-1}\Theta_{h_{\delta'}}\geq \varphi^{*}\theta - \delta' \omega_{X}$ on $X$.
\end{itemize}
Then, we have$:$
\begin{itemize}
\item[$(1)$]  The direct image sheaf $\varphi_{*}(-mK_{X/Y} +L)$ is
$((1-\varepsilon ) \theta-\varepsilon\omega_{Y})$-positively curved
for any $m\in \mathbb{Z}_{+}$ and $\e>0$.

\item[$(2)$] If we further assume that $\omega_{Y} \geq \theta$ holds,
then $\varphi_{*}(-mK_{X/Y} +L)$ is $\theta$-weakly positively curved.
In particular, if $L$ is a pseudo-effective line bundle $($i.e.,\,positively curved$)$,
then $\varphi_{*}(-mK_{X/Y} +L)$ is weakly positively curved.
\end{itemize}
\end{prop}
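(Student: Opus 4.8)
The plan is to reduce to the known positivity theorem for direct images of twisted relative pluricanonical sheaves (as in \cite{CH19, PT18, CCM21}) by absorbing the non-nefness of $-K_{X/Y}$ into the twist provided by $L$. First I would fix $m \in \mathbb{Z}_+$ and work with the line bundle $-mK_{X/Y} + L$. For each $\delta, \delta' > 0$, the metric $g_\delta^{\otimes m} \otimes h_{\delta'}$ on $-mK_{X/Y} + L$ has curvature $\geq \varphi^*\theta - (m\delta + \delta')\omega_X$, so $-mK_{X/Y} + L$ is $\varphi^*\theta$-weakly positively curved; but more importantly, combining $g_\delta^{\otimes(m+1)} \otimes g$ (using hypothesis (b)) shows that $-(m+1)K_{X/Y} + L$ is $\varphi$-big, and its metric has Lelong-number locus contained in $\{\nu(g_\delta, x) > 0\}$, hence not dominant over $Y$. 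The key point is then to write $-mK_{X/Y} + L = K_{X/Y} + \big(-(m+1)K_{X/Y} + L\big)$ and apply the positivity of direct images of the form $\varphi_*\big(K_{X/Y} + N\big)$ for a line bundle $N$ carrying a (possibly singular) metric with semi-positive curvature twisted by $\varphi^*(\text{something})$ and with multiplier ideal generically trivial along the fibers.

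The main steps, in order: (i) Produce, for each small $\e > 0$, a single singular metric on $N_\e := -(m+1)K_{X/Y} + L$ whose curvature is $\geq \varphi^*((1-\e)\theta) - \e\omega_X$ — this is a standard regularization/convexity argument mixing $g$ (weight $\e$), $g_\delta$ (to handle the $-K_{X/Y}$ contribution with $\delta$ chosen $\ll \e$), and $h_{\delta'}$ (weight $1-\e$, with $\delta' \ll \e$); crucially hypothesis (a) guarantees the resulting metric still has Lelong-number locus not dominant over $Y$, so its multiplier ideal sheaf $\mathcal{I}(h_{N_\e})$ restricts to $\mathcal{O}_{X_y}$ on a general fiber. (ii) Invoke the positivity theorem for $\varphi_*\big(K_{X/Y} + N_\e \otimes \mathcal{I}(h_{N_\e})\big)$ — this is the Berndtsson–Păun–Takayama-type statement, in the form packaged in \cite[Theorem 2.2]{CCM21} or \cite{PT18}, valid over (not necessarily compact) Kähler manifolds — to conclude that this direct image is $(1-\e)\theta - \e'\omega_Y$-positively curved for a suitable $\e'$ depending on $\e$. (iii) Identify $\varphi_*\big(K_{X/Y} + N_\e \otimes \mathcal{I}(h_{N_\e})\big)$ with (a subsheaf of, agreeing generically with, hence — after taking the torsion-free/reflexive hull and using that the statement of positivity is about the metric on the locally free locus) $\varphi_*(-mK_{X/Y}+L)$: the $\varphi$-bigness from (b) forces the $m$-th and $(m+1)$-st relative pluricanonical-twisted pieces to have the same generic rank, and a standard argument (e.g.\ \cite[2.8 Proposition]{CH19}) shows the natural inclusion is generically an isomorphism; since singular Hermitian metrics and their curvature bounds only see a Zariski-open dense subset, the positivity transfers. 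This proves (1); for (2), under the extra hypothesis $\omega_Y \geq \theta$ we have $(1-\e)\theta - \e'\omega_Y \geq \theta - \e'' \omega_Y$ for $\e'' = \e' + \e$, so letting $\e \to 0$ exhibits the family $\{h_{\e''}\}$ realizing $\theta$-weak positivity; the final sentence is the special case $\theta = 0$ together with the observation that a pseudo-effective $L$ is $0$-positively curved, so (c) holds with $\theta = 0$ trivially.

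The main obstacle I anticipate is Step (i): carefully arranging the three metrics so that the combined weight on $\varphi^*\theta$ is exactly $(1-\e)$ (not merely bounded away from $1$), the residual negative term is $\leq \e\omega_X$, \emph{and} the Lelong upper-level sets remain non-dominant over $Y$. The subtlety is that hypothesis (a) only controls the Lelong numbers of $g_\delta$, so one must take the $-K_{X/Y}$-contribution with coefficient $m+1$ exactly (to cancel $K_{X/Y}$ and leave a single honest line bundle $N_\e$) while keeping $\delta$ small relative to $\e$; one then needs that a finite sum of quasi-psh weights, all but one of which have non-dominant Lelong locus, again has non-dominant Lelong locus — true because the union of finitely many non-dominant analytic (or Lelong super-level) sets is non-dominant, but it requires knowing the upper-level sets are analytic, which is Siu's theorem. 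A secondary technical point, Step (iii), is that the ambient spaces here are merely \emph{Kähler} (possibly non-compact) rather than projective, so one must cite the versions of the direct-image positivity and of the "$m$ vs $m+1$" comparison that are already available in the Kähler/analytic category; since the excerpt allows us to use \cite[Theorem 2.2]{CCM21} and the setup of \S\ref{subsec2-1}, this is a matter of bookkeeping rather than new input.
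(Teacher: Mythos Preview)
Your overall strategy is right, but Step~(i) has a genuine gap that the paper resolves by a different decomposition. You write $-mK_{X/Y}+L = K_{X/Y} + N$ with $N = -(m+1)K_{X/Y}+L$ and build a metric on $N$ from $g_\delta^{m+1}$, $g^\e$, and $h_{\delta'}^{1-\e}$. For the direct-image positivity theorem to yield a metric on all of $\varphi_*(-mK_{X/Y}+L)$ (not a proper subsheaf), you need the multiplier ideal of this metric to be $\mathcal{O}_{X_y}$ on a general fiber. But hypotheses~(b) and~(c) impose \emph{no} control whatsoever on the Lelong numbers of $g$ or of $h_{\delta'}$: these are arbitrary singular metrics, and their Lelong super-level sets can perfectly well dominate $Y$. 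Your sentence ``all but one of which have non-dominant Lelong locus, again has non-dominant Lelong locus'' is in fact false as stated (Lelong numbers add under products of metrics, so one dominant summand makes the total dominant), and even the intended statement fails because two of your three ingredients, not just one, are uncontrolled. Moreover, even if the positive-Lelong-number locus were non-dominant, that alone does not force $\mathcal{I}(h_{N_\e})|_{X_y}=\mathcal{O}_{X_y}$; Skoda's criterion needs the Lelong numbers to be strictly below $1$, and with weight $1-\e$ on $h_{\delta'}$ you cannot arrange this.

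The paper's fix is to decompose instead as
\[
-mK_{X/Y}+L \;=\; kK_{X/Y} \;+\; \bigl(-(m+k)K_{X/Y}+L\bigr)
\]
with $k\gg 1$, and put $G := g_\delta^{m+k}\cdot g^{\e}\cdot h_{\delta'}^{1-\e}$ on the second summand. The P\u{a}un--Takayama theorem (in the K\"ahler form of \cite{Wan21}) then outputs positivity for $\varphi_*\bigl((-mK_{X/Y}+L)\otimes\mathcal{I}(G^{1/k})\bigr)$, and the relevant Lelong numbers on a general fiber are now
\[
\nu\bigl(g_\delta^{1+m/k}\cdot g^{\e/k}\cdot h_{\delta'}^{(1-\e)/k},\,x\bigr)\;\le\;\tfrac{1}{k}\,\nu(g\cdot h_{\delta'},x)\;<\;1
\]
once $k$ is chosen large (after fixing $\e,\delta',\delta$). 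This is exactly the ``$k$-th root'' trick that makes the uncontrolled singularities of $g$ and $h_{\delta'}$ harmless via Skoda; your $k=1$ version cannot achieve it. Once you insert this step, your outline (curvature estimate, generic surjectivity of the inclusion, and the deduction of~(2) from~(1)) matches the paper's argument.
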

\begin{rem}
Throughout this paper, for a line bundle $M$ on $X$,  the notation  $\varphi_{*}(M)$ denotes
the direct image sheaf  $\varphi_{*}(\mathcal{O}_{X}(M))$ of the invertible sheaf  $\mathcal{O}_{X}(M)$.
\end{rem}

\begin{proof}
We apply the positivity of direct image sheaves \cite[Theorem 5.1.2]{PT18} (see also \cite{HPS18}) to
construct the desired singular Hermitian metrics on $\mathcal{W}_{m}:=\varphi_{*} (-mK_{X/Y} +L)$.
Note that \cite[Theorem 5.1.2]{PT18}  is stated for projective fibrations,
but in fact it is valid for K\"ahler fibrations (see  \cite[Proposition 2.5, Theorem 2.6]{Wan21}).
To this end, we consider the following decomposition of  $-mK_{X/Y} + L $:
\begin{align*}
-mK_{X/Y} + L &= k K_{X/Y}
\overbrace{-(m+k)K_{X/Y}}^{\textup{with  $g_{\delta}^{m+k}$}}
+ \overbrace{L.}^{\textup{with  $g^{\e} \cdot h_{\delta'}^{1-\e}$}}
\end{align*}
Furthermore, we define the metric $G$ on $-(m+k)K_{X/Y}+L$
by
$$ G := g_{\delta}^{m+k} \cdot g^{\e} \cdot h_{\delta'}^{1-\e}.
$$
We can easily confirm that the multiplier ideal sheaf $\mathcal{I}(G^{1/k})$ satisfies that
\begin{align}\label{eq-mul}
\mathcal{I}(G^{1/k})|_{X_{y}} =
\mathcal{I}\big( g_{\delta}^{1+(m/k)}  \cdot g^{\e/k} \cdot h_{\delta'}^{(1-\e)/k} \big)|_{X_{y}}=\mathcal{O}_{X_{y}}
\end{align}
for a very general fiber $X_{y}$ and  a sufficiently large $k \gg 1$.
Indeed,  since Condition (a) implies that
$$
\nu(g_{\delta}^{1+(m/k)}  \cdot g^{\e/k} \cdot h_{\delta'}^{(1-\e)/k}, x) \leq \nu(g^{1/k} \cdot h_{\delta'}^{1/k}, x) < 1 \text{ for any $x \in X_{y}$},
$$
we see that
$$
\mathcal{O}_{X_{y}} = \mathcal{I}(G^{1/k}|_{X_{y}})\subset
\mathcal{I}(G^{1/k})|_{X_{y}} \subset \mathcal{O}_{X_{y}}
$$
by Skoda's lemma and the restriction formula.
This indicates that the support of $\mathcal{O}_{X}/\mathcal{I}(G^{1/k})$ is not dominant over $Y$. 
Hence, the natural inclusion
\begin{align}\label{eq-gensur}
\varphi_{*}((-mK_{X/Y} +L)\otimes \mathcal{I}(G^{1/k})) \to \varphi_{*}(-mK_{X/Y} +L)
\end{align}
is generically surjective.
Meanwhile, by the construction of metrics, we can easily see that
\begin{align*}
\sqrt{-1}\Theta_{G} &\geq - \delta (m+k) \omega_{X} + \e \omega_{X} - \e \varphi^{*} \omega_{Y} +(1-\e) \varphi^{*} \theta
- (1-\e) \delta' \omega_{X}\\
& \geq \big( \e -\delta(m+k)  -\delta'\big) \omega_{X} - \e \varphi^{*}\omega_{Y} +(1-\e) \varphi^{*}  \theta.
\end{align*}
For a given  $\e>0$,
after taking $\delta'=\delta'(\e)>0$ with $\delta'<(1/2)\e$,
we fix a sufficiently large $k=k(\delta')=k(\e)$ satisfying \eqref{eq-mul}.
Furthermore, we  take $1\gg \delta=\delta(m,k,\e) >0$ so that $\delta(m+k)<(1/2)\e$.
Then, the right-hand side is bounded from below
by $\varphi^{*}(- \e\omega_{Y} +(1-\e)\theta)$.
From this curvature estimate and \eqref{eq-gensur},
we can deduce that the sheaf $\varphi_{*}(-mK_{X/Y} +L)$ has the desired singular Hermitian metrics in Conclusion (1),
by applying the positivity of direct image sheaves
(see \cite[Theorem 5.1.2]{PT18} or \cite[Proposition 2.5, Theorem 2.6]{Wan21})
to $G e^{-\varphi^{*} f}$, where $f$ is a local potential of $- \e\omega_{Y} +(1-\e)\theta$.
Conclusion (2) directly follows from $ -\varepsilon\omega_{Y}  +(1-\varepsilon ) \theta \geq -2\varepsilon\omega_{Y}+ \theta$.
\end{proof}

\begin{theo}\label{theo-flat}
Let $\varphi\colon X\to Y$ be an equi-dimensional fibration between compact K\"ahler spaces $X$ and $Y$.
Let  $Y_{0} \subset Y$ be  a Zariski open set  with $\codim (Y \setminus Y_{0}) \geq 2$
such that $X_{0}:=\varphi^{-1}(Y_{0})$ and $Y_{0}$ are smooth
and that $\varphi_{0}:=\varphi|_{X_{0}}\colon X_{0} \to Y_{0}$ is a smooth fibration.
Let $L$ be a line bundle on $X$.
Assume the following conditions$:$
\begin{itemize}
\item[(a)] $-K_{X}$ is $\QQ$-Cartier and
the non-nef locus of $-K_{X}$ is not dominant over $Y$ in the sense of
Proposition \ref{prop-direct} $($\rm{a}$)$$;$
\item[(b)] $-K_{Y}$ is $\QQ$-Cartier and numerically trivial$;$
\item[(c)] $L$ is a pseudo-effective (i.e.,\,positively curved) and $\varphi$-ample line bundle on $X$$;$
\item[(d)] For any $p \in \mathbb{Z}_{+}$ with $\varphi_{*}(pL) \not =0$,
the line bundle $\det (\varphi_{*}(pL)) |_{Y_{0}}$ has a smooth Hermitian metric $g_{p}$
such that $\eta_{p}:=\sqrt{-1}\Theta_{g_{p}} \geq  -\omega_{Y}$ holds on $Y_{0}$
for some K\"ahler form $\omega_{Y}$.
\end{itemize}
Let $r$ be the rank of $\varphi_{*}(L)$
and $p$ be a sufficiently large integer with $p/r \in \mathbb{Z}_{+}$.
Define the sheaf $\mathcal{V}_{p}$ on $Y$ by
$$
\mathcal{V}_{p}:=\varphi_{*}(pL)\otimes
\Big( \dfrac{p}{r}\det \varphi_{*}(L) \Big)^{*}.
$$
Then, both $\mathcal{V}_{p}$ and $(\det \mathcal{V}_{p})^{*}$ are weakly positively curved.
\end{theo}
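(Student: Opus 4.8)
The plan is to prove both weak-positivity statements by working on the open set $Y_{0}$, where $\varphi$ is a smooth morphism of K\"ahler manifolds, and then to descend everything to $Y$ using $\codim(Y\setminus Y_{0})\geq 2$; weak positivity of $\mathcal{V}_{p}$ will come from the positivity of direct images, whereas weak positivity of $(\det\mathcal{V}_{p})^{*}$ will be reduced to the numerical triviality of $\det\mathcal{V}_{p}$, established by a Grothendieck--Riemann--Roch computation that uses that the fibres are curves.

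\emph{Positivity on $Y_{0}$.} I would first check that Proposition \ref{prop-direct} applies to $\varphi_{0}\colon X_{0}\to Y_{0}$. Since $-K_{Y}$ is numerically trivial by (b), one has $-K_{X/Y}\equiv -K_{X}$, so tensoring the near-nef metrics of (a) on $-K_{X}$ with a smooth metric of arbitrarily small curvature on $\varphi^{*}K_{Y}$ produces metrics on $-K_{X/Y}$ verifying hypothesis (a) of Proposition \ref{prop-direct}; hypothesis (b) holds because $L$ is $\varphi$-ample in (c) (a fibrewise-positive smooth metric on $L$ satisfies $\sqrt{-1}\Theta_{g}+\varphi^{*}\omega_{Y}\geq\omega_{X}$ after enlarging $\omega_{Y}$), and hypothesis (c) with $\theta=0$ is the pseudo-effectivity of $L$. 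Applying Proposition \ref{prop-direct}(2) (with $\theta=0$ and no relative canonical twist) to $pL$ and to $L$ then shows that $\varphi_{*}(pL)$ and $\varphi_{*}(L)$ are weakly positively curved on $Y_{0}$, and hence so are the line bundles $\det\varphi_{*}(pL)$ and $\det\varphi_{*}(L)$.

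\emph{Numerical triviality of $\det\mathcal{V}_{p}$ --- the main obstacle.} Setting $r_{p}:=\rank\varphi_{*}(pL)$, we have $c_{1}(\det\mathcal{V}_{p})=c_{1}(\det\varphi_{*}(pL))-\tfrac{p r_{p}}{r}c_{1}(\det\varphi_{*}(L))$, and I would show this class vanishes by pairing it with general curves $C\subset Y_{0}$. Over such a $C$ the morphism restricts to a $\PP^{1}$-bundle $S:=\varphi^{-1}(C)=\PP(E)\to C$; writing $d:=L\cdot f$ for the fibre degree (so $r=d+1$ and $r_{p}=pd+1$) and using Grothendieck--Riemann--Roch together with $\omega_{S/C}=\mathcal{O}_{\PP(E)}(-2)\otimes\varphi^{*}\det E$, one obtains
\[
c_{1}(\varphi_{S*}(pL))-\tfrac{p(pd+1)}{d+1}c_{1}(\varphi_{S*}(L))=\tfrac{p(p-1)}{2(d+1)}\,\varphi_{S*}\!\big(L\cdot(L+d\,K_{S/C})\big),
\]
while a direct intersection-number computation on the ruled surface $S$ gives $L\cdot(L+d\,K_{S/C})=0$. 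Condition (d) is what allows this comparison to be carried out with genuine smooth Chern forms representing $c_{1}(\det\varphi_{*}(pL))$ over $Y_{0}$. Thus $\deg_{C}\det\mathcal{V}_{p}=0$ for all general $C$, so $c_{1}(\det\mathcal{V}_{p})=0$ on $Y_{0}$, hence on $Y$ because $\codim(Y\setminus Y_{0})\geq 2$; in particular $(\det\mathcal{V}_{p})^{*}$ is numerically trivial and therefore weakly positively curved.

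\emph{Weak positivity of $\mathcal{V}_{p}$ and descent.} By the projection formula $\mathcal{V}_{p}=\varphi_{*}\big(pL-\tfrac{p}{r}\varphi^{*}\det\varphi_{*}(L)\big)$, and by the previous step the twisting class $\tfrac{p}{r}\det\varphi_{*}(L)$ differs from $\tfrac{1}{r_{p}}\det\varphi_{*}(pL)$ only by a numerically trivial $\QQ$-class. Combining the weakly positively curved metrics on $\varphi_{*}(pL)$ with the smooth reference metric $g_{p}$ on $\det\varphi_{*}(pL)$ furnished by (d), one builds on $\mathcal{V}_{p}|_{Y_{0}}$ singular Hermitian metrics with curvature $\geq-\varepsilon\omega_{Y}$ for every $\varepsilon>0$, the numerically trivial discrepancy being absorbed by a flat metric. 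Finally, the plurisubharmonic-potential condition defining these metrics extends across the codimension-$\geq 2$ set $Y\setminus Y_{0}$ (equivalently, the associated positive currents, whose classes are pulled back from $Y$, extend by Proposition \ref{BEG}), so $\mathcal{V}_{p}$ is weakly positively curved on $Y$. I expect the determinant identity and extracting from condition (d) exactly the curvature control needed in the last step to be the technically delicate points.
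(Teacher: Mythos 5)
Your first step (applying Proposition \ref{prop-direct} over $Y_{0}$ and extending across the codimension~$\geq 2$ set) is fine and coincides with the paper's Claim \ref{claim1}. The rest of the argument, however, has two genuine gaps. First, your route to $c_{1}(\det\mathcal{V}_{p})=0$ tests the class against ``general curves $C\subset Y_{0}$'' and uses that $\varphi^{-1}(C)\to C$ is a $\mathbb{P}^{1}$-bundle with $r=d+1$, $r_{p}=pd+1$ and vanishing relative Todd contribution. None of this is among the hypotheses of Theorem \ref{theo-flat}: the fibration is only assumed equi-dimensional, with no restriction on the fiber dimension or fiber type. More fatally, in the situations the theorem is designed for, $Y$ is a \emph{non-projective} compact K\"ahler surface (a non-projective torus or a K3-type base), which may contain no curves at all; numerical triviality of a $(1,1)$-class on a compact K\"ahler space cannot be detected by degrees on curves, so the Grothendieck--Riemann--Roch computation cannot even get started. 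This is exactly the projective-only shortcut the paper is structured to avoid.

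Second, even granting $c_{1}(\det\mathcal{V}_{p})=0$, the weak positivity of $\mathcal{V}_{p}$ does not follow formally from that of $\varphi_{*}(pL)$: you must twist the metrics $H_{\varepsilon}$ on $\varphi_{*}(pL)$ by a metric on $\bigl(\frac{p}{r}\det\varphi_{*}(L)\bigr)^{*}$ whose curvature is bounded below by $-\varepsilon\omega_{Y}$, and no such (``flat'') metric exists, since $c_{1}(\det\varphi_{*}(L))$ is in general a nontrivial pseudo-effective class; normalizing a weakly positively curved sheaf by its determinant is not a formal operation (already $\mathcal{O}(1)\oplus\mathcal{O}$ on $\mathbb{P}^{1}$ shows this for nefness). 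Condition (d) does not help here either: it bounds the curvature of $\det\varphi_{*}(pL)$ from \emph{below}, while your last step needs control of the \emph{dual} direction. The paper supplies precisely this missing mechanism: the fiber-product/diagonal argument (Claim \ref{claim2}) showing that the line bundle $r_{p}pL\otimes(\varphi^{*}\det\varphi_{*}(pL))^{*}$ is weakly positively curved on $X$, a second application of Proposition \ref{prop-direct} with $\theta=\frac{1}{pr_{p}}\eta_{p}$ giving weak positivity of $\varphi_{*}(L)\otimes\bigl(\frac{1}{pr_{p}}\det\varphi_{*}(pL)\bigr)^{*}$ (whence $(\det\mathcal{V}_{p})^{*}$), and finally the generically surjective map $\Sym^{p}(\varphi_{*}L)\otimes\bigl(\frac{p}{r}\det\varphi_{*}L\bigr)^{*}\to\mathcal{V}_{p}$ for $p\gg 1$ to transfer positivity to $\mathcal{V}_{p}$. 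Without an argument of this kind, the step you yourself flag as ``technically delicate'' is exactly where the proof is missing.
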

\begin{rem}\label{rem-flat}
The determinant sheaf $\det \varphi_{*}(L):= (\Lambda^{r} \varphi_{*}(L))^{**}$
is a reflexive sheaf of rank $1$, but not necessarily invertible when $Y$ has singularities;
therefore, precisely speaking, the notation $(p/r)\det \varphi_{*}(L) $ should be replaced with
$((\det \varphi_{*}(L))^{\otimes (p/r)})^{**}$.
Nevertheless, we mainly handle only  $\det \varphi_{*}(L)|_{Y_{0}}$,
which  is a line bundle on $Y_{0}$; hence, this notation does not cause confusion.

We apply this theorem to the case where the sheaf $\varphi_{*}(pL)$ is an orbifold vector bundle.
Condition (d), which may appear as a technical requirement, is automatically satisfied in this case.
\end{rem}
\begin{proof}
For the proof, employing  $L$ instead of ample line bundles, we adopt the argument in \cite{Cao19, CH19}.
However, the original argument in \cite{Cao19, CH19} is not so easy,
and our proof requires solving several technical issues.
Therefore, we write a detailed proof for the reader's convenience.

We may assume that $\varphi_{*}(pL)$ is locally free on $Y_{0}$
by removing the non-locally free locus of $\varphi_{*}(pL)$ from $Y_{0}$.
We initially prove Claim \ref{claim1} and Claim \ref{claim2}.

\begin{claim}\label{claim1}
$\varphi_{*}(pL)$ is weakly positively curved on $Y$ for any $p \in \mathbb{Z}_{+}$.
\end{claim}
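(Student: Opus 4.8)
The plan is to apply Proposition \ref{prop-direct} on the smooth locus $Y_0$ and then extend the resulting metric across the codimension-$\geq 2$ bad locus using the Bott-Chern machinery from Subsection \ref{subsec2-2}. More precisely, restrict everything to $\varphi_0\colon X_0 \to Y_0$, which is a smooth fibration between (non-compact) K\"ahler manifolds. Since $-K_Y$ is numerically trivial and $Y_0$ is smooth, we have $K_{X_0/Y_0} = K_{X_0} - \varphi_0^* K_{Y_0}$, so the non-nef locus of $-K_{X_0/Y_0}$ agrees with that of $-K_{X_0}$ up to the pullback of a numerically trivial class; by Condition (a) this locus is not dominant over $Y_0$, giving hypothesis (a) of Proposition \ref{prop-direct}. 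Condition (c) says $L$ is $\varphi$-ample, so after twisting by the pullback of a K\"ahler form on $Y$ we get a metric on $L$ with $\sqrt{-1}\Theta_g + \varphi^*\omega_Y \geq \omega_X$, i.e.\ hypothesis (b); and $L$ pseudo-effective gives hypothesis (c) with $\theta = 0$. Writing $pL = -mK_{X_0/Y_0} + (pL + mK_{X_0/Y_0})$ for a suitable auxiliary decomposition — or more directly applying the $\theta = 0$ case — Proposition \ref{prop-direct}(2) yields that $\varphi_{0*}(pL)$ is weakly positively curved on $Y_0$.

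Next I would upgrade this to a statement on all of $Y$. The point is that $\codim(Y \setminus Y_0) \geq 2$, and weak positivity is detected by plurisubharmonicity of $\log|e|_{h_\varepsilon^*}$ for local sections $e$ of the dual sheaf. Given the metrics $\{h_\varepsilon\}$ on $\varphi_{0*}(pL)|_{Y_0}$ with $\sqrt{-1}\Theta_{h_\varepsilon} \geq -\varepsilon \omega_Y \otimes \mathrm{id}$, the associated psh-type functions extend across $Y \setminus Y_0$ by the standard extension of plurisubharmonicity over analytic sets of codimension $\geq 2$ (the same principle invoked at the end of Subsection \ref{subsec2-1}). Since $\varphi_*(pL)$ is torsion-free and $Y$ is normal, its restriction to $Y_0$ determines it on the maximal locally free locus, so the extended metrics witness weak positivity of $\varphi_*(pL)$ on $Y$. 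One should check that the family $\{h_\varepsilon\}_{\varepsilon > 0}$ can be chosen with a uniform local boundedness so the extensions glue, but this is routine once the curvature bound $-\varepsilon\omega_Y$ is in hand.

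The main obstacle I anticipate is the bookkeeping around $K_{X_0/Y_0}$ versus $K_X$ and making sure Condition (a), stated for $-K_X$ on $X$, correctly transfers to the relative setting on $X_0$ — in particular that the non-dominant non-nef locus survives the passage to the relative canonical and that the Lelong-number condition in Proposition \ref{prop-direct}(a) is genuinely met by restricting the metrics $\{g_\delta\}$ to $X_0$ and correcting by $\varphi_0^*(\text{metric on } K_{Y_0})$. Because $-K_Y$ is only numerically trivial (not necessarily torsion), the correction metric on $K_{Y_0}$ may not be flat, but it can be taken with arbitrarily small curvature after absorbing into the $\delta$'s, which is exactly the flexibility Proposition \ref{prop-direct} allows. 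A secondary, more technical point is ensuring $\varphi_{0*}(pL)$ is nonzero and locally free on the (further shrunk) open set where we argue — but this is already arranged in the setup, since we reduced to $\varphi_*(pL)$ locally free on $Y_0$ at the start of the proof.
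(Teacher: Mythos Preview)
Your proposal is correct and follows essentially the same approach as the paper: apply Proposition~\ref{prop-direct} with $\theta = 0$ to the smooth fibration $\varphi_0 \colon X_0 \to Y_0$ (taking $m = 0$ and $L$ replaced by $pL$), and then extend the resulting metrics across $Y \setminus Y_0$ using $\codim \geq 2$. The paper's proof is terser, simply asserting that Conditions (a), (b), (c) of Theorem~\ref{theo-flat} verify the hypotheses of Proposition~\ref{prop-direct}; your worry about passing from $-K_X$ to $-K_{X/Y}$ via the numerically trivial $-K_Y$ is legitimate bookkeeping that the paper leaves implicit, and your resolution (absorb the small curvature of a smooth metric on $K_{Y_0}$ into the $\delta$'s) is exactly right. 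Two minor remarks: the decomposition $pL = -mK_{X_0/Y_0} + (pL + mK_{X_0/Y_0})$ is unnecessary, since $m = 0$ already gives what you need; and no uniform boundedness of the family $\{h_\varepsilon\}$ is required, since weak positivity only asks for one metric per $\varepsilon$, each extending separately --- the key point (which the paper does flag) is that the K\"ahler form $\omega_Y$ in the curvature bound is defined on all of $Y$, not just $Y_0$.
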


\begin{proof}
The assumptions of Proposition \ref{prop-direct} for $\theta=0$
are satisfied from Conditions (a), (b), (c) of Theorem \ref{theo-flat}.
Hence, by applying Proposition \ref{prop-direct} to $\varphi_{0}=\varphi|_{X_{0}}\colon X_{0} \to Y_{0}$,
we can obtain singular Hermitian metrics $\{H_{\e}\}_{\e>0}$ on $\varphi_{*}(pL) |_{Y_{0}}$
such that $\sqrt{-1}\Theta_{H_{\e}} \geq  -\varepsilon \omega_{Y} \otimes \id$ holds on $Y_{0}$
for some K\"ahler form $\omega_{Y}$.
Then, by $\codim (Y \setminus Y_{0}) \geq 2$,
the metrics $H_{\e}$ can be automatically extended to $Y$,
where we implicitly used that
$\omega_{Y}$ is a K\"ahler form defined on $Y$ (not only on $Y_{0}$).
This means that $\varphi_{*}(pL)$ is weakly positively curved on $Y$.
\end{proof}

\begin{claim}\label{claim2}
Let $r_{p}$ be the rank of $\varphi_{*}(pL)$.
Then, the sheaf
$$
r_{p}pL \otimes \big( \varphi^{*}\det \varphi_{*}(pL) \big)^{*}
$$
is weakly positively curved on $X$.
\end{claim}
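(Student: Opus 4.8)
The plan is to deduce weak positivity not from a direct image on the base $Y$ but from a direct image on the \emph{total space}: I will pass to the $r$-fold fibered power of $\varphi$, produce there an effective \emph{relative discriminant divisor} that incorporates $\det\varphi_{*}(pL)$, push this positivity down to $X$ via Proposition \ref{prop-direct}, and collapse the resulting sheaf-level statement by a determinant. The reason one must go to the fibered power is that $\det\varphi_{*}(pL)=\bigwedge^{r}\varphi_{*}(pL)$ sits inside $\varphi_{*}(pL)^{\otimes r}$ only as its \emph{alternating} summand, so it cannot be detected by restricting an evaluation-type section of $rpL$ to a single fiber point or to a diagonal — the alternating part vanishes identically there. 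Throughout I take $p$ large enough that $pL$ is $\varphi$-very ample and $R^{0}\varphi_{*}(pL)$ commutes with base change over $Y_{0}$, I work over the smooth loci $Y_{0}$ and $X_{0}:=\varphi^{-1}(Y_{0})$ — where $\varphi_{0}:=\varphi|_{X_{0}}$ is a smooth fibration and $W:=\varphi_{*}(pL)$ is locally free of rank $r:=r_{p}\geq 2$ — and I extend the metrics obtained across $X\setminus X_{0}$, which is harmless since $\codim_{X}(X\setminus X_{0})\geq 2$ by the equidimensionality of $\varphi$.

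First I would form $\widetilde{X}:=X_{0}\times_{Y_{0}}\cdots\times_{Y_{0}}X_{0}$ ($r$ copies), with projections $q_{1},\dots,q_{r}\colon\widetilde{X}\to X_{0}$ and $\Pi:=\varphi_{0}\circ q_{1}\colon\widetilde{X}\to Y_{0}$, equip $\widetilde{X}$ with the K\"ahler form $\sum_{i}q_{i}^{*}\omega_{X_{0}}$, and consider
$$
D:=\mathcal{O}_{\widetilde{X}}\Big(\sum_{i=1}^{r}q_{i}^{*}(pL)\Big)\otimes\Pi^{*}\big((\det W)^{*}\big).
$$
By flat base change and Künneth, $\Pi_{*}\mathcal{O}_{\widetilde{X}}\big(\sum_{i}q_{i}^{*}pL\big)\cong W^{\otimes r}$, which over $\CC$ has $\det W$ as a direct summand; the corresponding section of $D$ is the relative discriminant, and it is not identically zero, because over $y\in Y_{0}$ it is the determinant of the matrix of values of a basis of $H^{0}(X_{y},pL|_{X_{y}})$ at $r$ moving points of $X_{y}$, which is nonzero since $X_{y}$ is linearly nondegenerate in $\PP(W_{y}^{*})$ and hence $r$ general points of $X_{y}$ are independent. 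Thus $D$ is effective, in particular weakly positively curved, on $\widetilde{X}$.

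Next I would apply Proposition \ref{prop-direct} to the smooth fibration $q_{1}\colon\widetilde{X}\to X_{0}$ with the line bundle $D$ and $\theta=0$. Condition (b) holds because $D$ is $q_{1}$-ample: its restriction to a fiber $q_{1}^{-1}(x)=\{x\}\times X_{\varphi(x)}^{\,\times(r-1)}$ is $\bigotimes_{i=2}^{r}\operatorname{pr}_{i}^{*}(pL|_{X_{\varphi(x)}})$, which is ample. Condition (c) with $\theta=0$ is the previous step. The crucial point is Condition (a): one has $-K_{\widetilde{X}/X_{0}}=\sum_{i=2}^{r}q_{i}^{*}(-K_{X_{0}/Y_{0}})$, and since $-K_{Y}$ is numerically trivial (hypothesis (b) of Theorem \ref{theo-flat}) it carries a smooth Hermitian metric of zero Chern curvature, so $-K_{X_{0}/Y_{0}}=(-K_{X/Y})|_{X_{0}}$ differs from a numerically trivial class only by the curvature-free factor $\varphi_{0}^{*}K_{Y_{0}}$; pulling back and tensoring the metrics $\{g_{\delta}\}$ on $-K_{X/Y}$ furnished by hypothesis (a) of Theorem \ref{theo-flat} yields metrics on $-K_{\widetilde{X}/X_{0}}$ with curvature $\geq-\delta\,\omega_{\widetilde{X}}$ whose Lelong locus lies in $\bigcup_{i=2}^{r}q_{i}^{-1}\varphi_{0}^{-1}(B)$ for a proper subvariety $B\subsetneq Y$, and whose image under $q_{1}$ is $\varphi_{0}^{-1}(B\cap Y_{0})\subsetneq X_{0}$, hence not dominant over $X_{0}$. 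Proposition \ref{prop-direct}(1), applied with $m=0$, then gives that $q_{1*}(D)$ is weakly positively curved on $X_{0}$. By the projection formula and flat base change along $\varphi_{0}$,
$$
q_{1*}(D)\;\cong\;\mathcal{O}_{X_{0}}(pL)\otimes\varphi_{0}^{*}\big(W^{\otimes(r-1)}\otimes(\det W)^{*}\big),
$$
a locally free sheaf of rank $r^{\,r-1}$; its determinant is again weakly positively curved (the induced determinant metrics lose curvature only by a factor equal to the rank), and a Chern-class computation, using $\det(W^{\otimes(r-1)})=(\det W)^{\otimes(r-1)r^{r-2}}$ and $(r-1)r^{r-2}-r^{r-1}=-r^{r-2}$, identifies it with $\big(\mathcal{O}_{X_{0}}(rpL)\otimes\varphi_{0}^{*}(\det W)^{*}\big)^{\otimes r^{r-2}}$. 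Taking $r^{r-2}$-th roots of the metrics, and then extending across the codimension-$\geq 2$ locus $X\setminus X_{0}$, proves that $r_{p}\,pL-\varphi^{*}\det\varphi_{*}(pL)$ is weakly positively curved on $X$.

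The conceptual obstacle is the very first move: recognizing that $\det\varphi_{*}(pL)$ is invisible to pointwise or diagonal restrictions, so that the $r$-fold fibered power together with the discriminant section is the correct device to bring it to the surface. Once that is in place, the technical heart is the verification of hypothesis (a) of Proposition \ref{prop-direct} on $\widetilde{X}$ — which is exactly where the numerical triviality of $-K_{Y}$ enters — and the concluding determinant identification; the remaining ingredients (base change and local freeness of $\varphi_{*}(pL)$ over $Y_{0}$, the smooth/K\"ahler structure on $\widetilde{X}$, extension over a codimension-$\geq 2$ set) are routine in the setting of Theorem \ref{theo-flat}.
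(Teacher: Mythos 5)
Your overall architecture is close to the paper's in its first half: the paper also forms the $r$-fold fibre product, twists $\sum_i\pr_i^*(pL)$ by the pull-back of $(\det\varphi_*(pL))^*$, uses the nonzero alternating map $\det\varphi_*(pL)\to(\varphi_*(pL))^{\otimes r}$ as the section giving Condition (c), and verifies Condition (a) for the relative anti-canonical bundle of the fibre product exactly as you do. Where you genuinely diverge is the second half: you push $D$ forward along the other projection $q_1\colon\widetilde{X}\to X_0$ and collapse by a determinant and a root, whereas the paper pushes forward to $Y_0$, pulls back, and restricts the metric induced by the evaluation map $\psi^*\psi_*(L')\to L'$ to the diagonal. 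That alternative ending is legitimate (your rank and determinant bookkeeping, $\det(W^{\otimes(r-1)})=(\det W)^{\otimes(r-1)r^{r-2}}$ etc., is correct), and your verification of Condition (a) for $q_1$, via $-K_{\widetilde{X}/X_0}=\sum_{i\geq2}q_i^*(-K_{X_0/Y_0})$ and the fact that the Lelong locus sits in $\Pi^{-1}(B)$ whose $q_1$-image is $\varphi_0^{-1}(B)$, is fine. But there is a genuine gap at Condition (b). You dismiss it with ``$D$ is $q_1$-ample because its restriction to each fibre is ample''; over the non-compact base $X_0$ fibrewise ampleness does not give the metric statement required in Proposition \ref{prop-direct} (b), namely a singular metric $G$ on $D$ with $\sqrt{-1}\Theta_G+q_1^*\omega_{X_0}\geq\omega_{\widetilde X}$ for fixed K\"ahler forms. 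The delicate factor is precisely $\Pi^*(\det\varphi_*(pL))^*$: a smooth metric on $\det\varphi_*(pL)|_{Y_0}$ can have wildly negative curvature for the dual near $Y\setminus Y_0$, and the only control available is hypothesis (d) of Theorem \ref{theo-flat} (the metric $g_p$ with $\eta_p\geq-\omega_Y$), which your proof never invokes --- a red flag, since (d) exists exactly for this step. The fix is to build $G=\prod_iq_i^*g^{p}\cdot\Pi^*g_p^{-1}$ and absorb $-\Pi^*\eta_p$ into a modified base form such as $C\omega_X+\varphi_0^*\eta_p$ (K\"ahler on $X_0$ thanks to (d)), mirroring what the paper does for $\psi$; but then the weak positivity of $q_{1*}(D)$ you obtain is a priori relative to this form, which is defined only on $X_0$, so your final extension across the codimension-two set $X\setminus X_0$ also needs an argument (the extension lemma requires the bounding form to be defined on all of $X$). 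As written, Condition (b) and the extension step are not established.

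Two further points. First, your motivating claim that the determinant ``cannot be detected by restricting an evaluation-type section to the diagonal because the alternating part vanishes there'' is mistaken: the paper restricts to the diagonal not the alternating section but the metric induced by the full evaluation morphism $\psi^*\psi_*(L')\to L'$, which is generically nonzero on the diagonal (decomposable tensors $s_1\otimes\cdots\otimes s_r$ evaluate nontrivially at $(x,\dots,x)$), so the diagonal route does work. Second, the claim is stated for every $p$ with $\varphi_*(pL)\neq0$, while you restrict to $p\gg0$ with $\varphi$-very ampleness and cohomology-and-base-change; this restriction is unnecessary (the nonvanishing of your section follows already from the injectivity of $\det W\hookrightarrow W^{\otimes r}$ over $Y_0$, with no fibrewise interpretation needed), and the case $r_p=1$, which you exclude, should at least be noted (there the section of $pL\otimes\varphi^*(\det\varphi_*(pL))^*$ coming from the evaluation map settles it directly).
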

\begin{proof}
The basic strategy is the same as in \cite[Proposition 3.15]{Cao19},
but some different arguments are required because of the lack of ample line bundles.
For simplicity of the notation, we assume that
$p=1$ by replacing $L$ with $pL$. 
Fix a K\"ahler form $\omega_{Y}$ on $Y$ with Condition (d) of Theorem \ref{theo-flat}.
We may assume that 
$$
\eta_{1}+(1/r)\omega_{Y}
=\sqrt{-1}\Theta_{g_{1}} + (1/r)\omega_{Y}$$ 
is a K\"ahler form 
by replacing $\omega_{Y}$ with $k\omega_{Y}$, 
where $r:=r_{1}$ and $k \gg 1$.
Furthermore, we take a K\"ahler form $\omega_{X}$ on $X$ such that
$\omega_{X} \geq   \varphi^{*}\omega_{Y}$.
Note that this condition is preserved 
by replacing  $\omega_{X}$ and $\omega_{Y}$ with $k\omega_{X}$ and $k\omega_{Y}$. 
Since $L$ is a $\varphi$-ample line bundle on $X$,
we can take a  smooth Hermitian metric $g$ on $L$
such that $\sqrt{-1}\Theta_{g}+\varphi^{*} \omega_{Y} \geq c \omega_{X}$
holds for some constant $1 \gg c>0$ by replacing  $\omega_{X}$ and $\omega_{Y}$ with $k\omega_{X}$ and $k\omega_{Y}$.

Let $Z$ be the $r$-times fiber product  of $\varphi \colon X \to Y$
with the $i$-th projection $\pr_{i}\colon Z\to X$ and the natural morphism $\psi \colon Z \to Y$:
$$
\xymatrix{
    Z:=X \times_{Y}  \cdots \times_{Y}X   \ar[drr]_{\psi} \ar[rr]^{\qquad \qquad  \pr_{j}} \ar[d]_{\pr_{i}} && X \ar[d]^{\varphi} \\
   X       \ar[rr]_{\varphi}  &&   Y.}
$$
Set
$$
L_{r}:=\sum_{i=1}^{r} \pr_{i}^{*} L \text{ and }
L':=L_{r} \otimes  (\psi^{*}\det \varphi_{*} (L))^{*}.
$$
To apply Proposition \ref{prop-direct} to
\begin{align*}
\psi_{0}= \psi|_{Z_{0}}\colon Z_{0} := \psi^{-1}(Y_{0})\to Y_{0}
\text{ equipped with } L' |_{Z_{0}} \text{ and }  \theta:=\eta_{1}
\end{align*}
we  examine the non-nef locus of $-K_{Z/Y}$ and metrics on $L'$.

By Conditions (a), (b) of  Proposition \ref{prop-direct}, we obtain singular Hermitian metrics $\{g_{\delta}\}_{\delta>0}$
on $-K_{X/Y}|_{X_{0}}=-K_{X_{0}/Y_{0}}$ such that
$\sqrt{-1}\Theta_{g_{\delta}} \geq -\delta \omega _{X}$ holds on $X_{0}$
and the upper-level set $\{x \in X_{0}\,|\, \nu(g_{\delta}, x)>0 \}$ of Lelong numbers is not dominant over $Y_{0}$.
Since $\psi\colon Z \to Y$ is a smooth fibration over $Y_{0}$,
we have
$$
K_{Z_{0}}= \sum_{i=1}^{r} \pr_{i}^{*} K_{X_{0}} \text{ on } Z_{0}.
$$
Hence, we obtain the metric $G_{\delta}:=\sum_{i=1}^{r} \pr_{i}^{*} g_{\delta}$ on $-K_{Z_{0}/Y_{0}}$.
By construction, the upper-level set $\{x \in Z_{0}\,|\, \nu(G_{\delta}, x)>0 \}$  of Lelong numbers 
is not dominant over $Y_{0}$ and the curvature current $\sqrt{-1}\Theta_{g_{\delta}}$ satisfies that
$$
\sqrt{-1}\Theta_{g_{\delta}} \geq  -\delta \sum_{i=1}^{r} \pr_{i}^{*} \omega _{X}
\text{ on } Z_{0}.
$$
This indicates that $-K_{Z_{0}/Y_{0}}$ satisfies Condition (a) of Proposition \ref{prop-direct}
for the K\"ahler form  $\sum_{i=1}^{r} \pr_{i}^{*} \omega _{X}$ on $Z_{0}$.
Consider the smooth Hermitian metric
$$
G:=(\sum_{i=1}^{r} \pr_{i}^{*}g) \cdot (\psi^{*}g_{1})^{-1}
\text{ on } L'=(\sum_{i=1}^{r} \pr_{i}^{*} L) \otimes  (\psi^{*}\det \varphi_{*} (L))^{*}.
$$
Recall that $g_{1}$ is the smooth Hermitian metric on $\det \varphi_{*} L  |_{Y_{0}}$ in Condition (d).
Then, we obtain that
$$
\sqrt{-1}\Theta_{G}(L') +
\sum_{i=1}^{r} \pr_{i}^{*} \varphi^{*}\big (\omega_{Y} + \frac{1}{r}\eta_{1} \big) \geq \sum_{i=1}^{r} \pr_{i}^{*} c\,\omega_{X}
\text{ on } Z_{0}.
$$
Here we used $\psi^{*}=\pr_{i}^{*} \circ  \varphi^{*}$ for any $1 \leq i \leq r$.
Since $\omega_{Y} + (1/r)\eta_{1}$ is a K\"ahler form on $Y_{0}$,
the line bundle $L'|_{Z_{0}}$ satisfies Condition (b) of Proposition \ref{prop-direct}.
On the other hand, there exists the non-zero natural morphism
$$
\det \varphi_{*}(L) \to (\varphi_{*}(L))^{\otimes r} \cong \psi_{*} (L_{r}) \text{ on } Y_{0},
$$
which shows that $h^{0}(Z_{0}, L') \not =0$ by the definition of $L'$.
In particular, the line bundle $L'|_{Z_{0}}$ satisfies Condition (c) of Proposition \ref{prop-direct} for $\theta=0$ (and $\delta'=0$).
The above arguments enable us to apply Proposition \ref{prop-direct},
and then we obtain singular Hermitian metrics $\{H_{\e}\}_{\e>0}$ on $\psi_{*}(L') |_{Y_{0}}$
such that $\sqrt{-1}\Theta_{H_{\e}} \geq  -\e \omega_{Y} \otimes \id$ on $Y_{0}$.

We finally prove the desired conclusion
using the metrics on $L'$ induced by $H_{\e}$.
Let us consider the natural morphism
\begin{align}\label{eq-dia}
\psi^{*}\psi_{*}(L') \to L'  \text{ on } Z_{0},
\end{align}
which is generically surjective by $h^{0}(Z_{0}, L') \not =0$.
Let  $G_{\e}$ be the metric on $L' |_{Z_{0}}$ induced
by $\psi^{*}H_{\e}$ and the above morphism.
The diagonal subset $\Delta$ of the fiber product $Z_{0}$ is identified with $X_{0}$.
Note that $L'|_{\Delta} \cong  rL \otimes \big(  \varphi^{*}\det \varphi_{*}L\big)^{*}$ holds
under this identification and \eqref{eq-dia} is not identically zero on $\Delta$.
By construction, the metric $G_{\e}|_{\Delta}$ on
$L'|_{\Delta} \cong rL \otimes ( \varphi^{*}\det \varphi_{*}L)^{*}$
 is well-defined
(i.e.,\,$G_{\e}|_{\Delta} \not \equiv \infty$) and
$$
\sqrt{-1}\Theta_{G_{\e}}|_{\Delta} \geq -\e \psi^{*}\omega_{Y}|_{\Delta} \geq -\e \omega_{X}
\text{ holds on }\Delta \cong X_{0}.
$$
Note that the well-definedness follows since $G_{\e}$ is constructed by the pull-back $\psi^{*}H_{\e}$.
This curvature condition can be extended to $X$ by $\codim (X\setminus X_{0}) \geq 2$.
Here, we used that $\varphi\colon  X\to Y$ has equi-dimensional fibers.
\end{proof}

We finish the proof of Theorem \ref{theo-flat}.
Let $p$ be an integer with $p/r \in \mathbb{Z}_{+}$.
By Claim \ref{claim2} and Condition (d),
there exist singular Hermitian metrics $\{g_{\delta'}\}_{\delta'>0}$ on $L |_{X_{0}}$
such that
$$
\sqrt{-1}\Theta_{g_{\delta'}}(L) \geq \varphi^{*}
\Big( \frac{1}{pr_{p}} \eta_{p} \Big) - \delta' \omega_{X} \text{ on } X_{0}.
$$
Let us apply Proposition \ref{prop-direct} to $\theta:=(1/pr_{p}) \eta_{p}$.
Then, since $\eta_{p}$ is the curvature of $\det \varphi_{*}(pL)$,
we see that
\begin{align}\label{eq-weak}
\varphi_{*}L  \otimes \Big( \frac{1}{pr_{p}} \det \varphi_{*}(pL)\Big)^{*}
\text{ is weakly positively curved}
\end{align}
on $Y_{0}$ (with respect to $\omega_{Y}$);
hence it is weakly positively curved on $Y$ since $\omega_{Y}$ is defined on $Y$.
The determinant sheaf
$$
\det \varphi_{*} L   \otimes \Big( \frac{r}{pr_{p}} \det \varphi_{*}(pL)\Big)^{*}
=\det \varphi_{*} L  - \frac{r}{pr_{p}} \det \varphi_{*}(pL)
$$
is also weakly positively curved on $Y$.
Here, we use the additive notation on the left-hand side, which is justified on $Y_{0}$
(see Remark \ref{rem-flat}).
This implies that
\begin{align*}
(\det \mathcal{V}_{p})^{*}&=
-\det \varphi_{*}(pL) + \frac{r_{p} p}{r} \det \varphi_{*} L \\
&\geq_{\rm{w}} -\det \varphi_{*}(pL) + \frac{r_{p} p}{r}\cdot \frac{r}{pr_{p}} \det \varphi_{*}(pL) =0
\text{ on } Y_{0}
\end{align*}
is weakly positively curved,
where the notation $\geq_{\rm{w}}  $ denotes the difference is weakly positively curved.
On the other hand, since $(p/r)L$ is sufficiently $\varphi$-ample,
the natural morphism
$$
\mathcal{W}_{p}:=\Sym^{p}(\varphi_{*} L ) \otimes
\Big( \frac{p}{r}\det \varphi_{*} L \Big)^{*}
\to
\varphi_{*}(pL)\otimes \Big( \frac{p}{r}\det \varphi_{*} L \Big)^{*}=\mathcal{V}_{p}
$$
is generically surjective for $p \gg1$.
The sheaf $\mathcal{W}_{p}$ can be written as the $p$-th symmetric tensor of
\eqref{eq-weak} of $p=1$;
therefore $\mathcal{W}_{p}$ is weakly positively curved.
By the above morphism, we see that $\mathcal{V}_{p}$
is also weakly positively curved.
\end{proof}

\subsection{Hermitian metrics on orbifold vector bundles}\label{subsec2-4}
In this subsection,  following the discussions in \cite{MM, Wu23},
we review some basic facts on orbifold structures.

Let $(X, \omega)$ be a compact K\"ahler space with quotient singularities.
The space $X$ can be regarded as a K\"ahler orbifold
(see \cite[Section 5.4]{MM}, \cite[Definition 1]{ Wu23} for the precise definition);
in particular, there exists an open cover $\mathcal{U}:=\{U_\alpha\}_{\alpha \in A}$ of $X$
and a Galois quasi-\'etale cover $\pi_\alpha\colon  \tilde{U}_\alpha \to \tilde{U}_\alpha/G_\alpha \cong U_\alpha$,
where $\tilde{U}_\alpha$ is a smooth variety and $G_\alpha$ is a finite group.
Recall that quasi-\'etale covers are defined as finite surjective morphisms that are \'etale in codimension $1$.
The smooth variety $\tilde{U}_\alpha$  is called a \textit{local smooth ramified cover}.

A reflexive  sheaf $\cF$ on $X$ is called an {\textit{orbifold vector bundle}}
if $\{(\pi_\alpha^* \cF)^{**}\}_\alpha$ is locally free for any $\alpha \in A$.
The complex orbifold, denoted by $F$, is determined
by the quotients $\{(\pi_\alpha^* \cF)^{**}/G_\alpha\}_\alpha$
of the total space (as a vector bundle) of $(\pi_\alpha^* \cF)^{**}$,
which has the orbifold morphism $F \to X$.
Furthermore, the projective space orbifold bundle $p \colon  \PP(F) \to X$ and
the tautological orbifold line bundle $\OX_{\PP(F)}(1)$ can be also defined.
A singular Hermitian metric $h$ on $\cF$ in the sense of Subsection \ref{subsec2-1}
determines the {\textit{orbifold metric}},
that is, the family $\{h_{\alpha}\}_{\alpha}$
of a (possibly singular) metric $h_{\alpha}$ (defined by the pullback of $h$) on $(\pi_\alpha^* \cF)^{**}$
compatible with the orbifold structure.
The orbifold metric is said to be {\textit{smooth}} if $h_{\alpha}$ is smooth for any $\alpha$.
Then, we can define the notation of positivity for $\cF$ as in the case of locally free sheaves,
and generalize \cite[Theorem 1.18]{DPS94} to orbifold vector bundles.
We emphasize that orbifold vector bundles themselves are sheaves,
but all the metrics and positivity are calculated on local smooth ramified covers.

\begin{defi}
\label{orb-metric}
Let $\omega$ be a K\"ahler form on a K\"ahler orbifold $X$.
An orbifold vector bundle $\cF$ is said to be {\textit{pseudo-effective in the strong sense}}
if there exists a family $\{h_\e\}_{\e>0}$ of orbifold metrics on $\OX_{\PP(F)}(1)$ such that
$$
\sqrt{-1}\Theta_{h_\e}(\OX_{\PP(F)}(1)) +  \e p^* \omega \geq 0
\text{ holds  in the sense of currents }
$$
and the polar set $\{h_\e=+\infty\}$ is not dominant over $X$,
where the above inequality means that it holds on each local smooth ramified cover of $\PP(F)$.
The orbifold vector bundle $\cF$ is said to be {\textit{nef}}
if  $h_\e$ in the above condition can be chosen as a smooth orbifold metric.

The orbifold vector bundle $\cF$ is said to be \textit{{numerically flat}}  (resp.\,{\textit{Hermitian flat}})
if both $\cF$, $\cF^*$ are nef (resp.\,$\cF$ admits an orbifold metric whose curvature vanishes
over each local smooth ramified cover).
\end{defi}

\begin{theo}[{\cite[Corollary 2, Theorem D]{Wu23}}] \label{thm-flat}
Let $\cF$ be an orbifold vector bundle on a compact K\"ahler orbifold $X$.

$(1)$  If $\cF$ is weakly positively curved as a sheaf on $X$ $($see Definition \ref{def-psef}$)$
and satisfies that $c_{1}(\cF)=0$, then $\cF$ is a numerically flat orbifold vector bundle.

$(2)$ The orbifold vector bundle $\cF$ is numerically flat if and only if there exists a filtration of orbifold vector bundles
\begin{align}\label{fil-herm}
0=:\mathcal{F}_0 \subset \mathcal{F}_{1} \subset \dots \subset \mathcal{F}_{m-1} \subset
\mathcal{F}_{m}:=\cF
\end{align}
such that each quotient  $\mathcal{F}_k/\mathcal{F}_{k-1}$ is a Hermitian flat orbifold vector bundle.
\end{theo}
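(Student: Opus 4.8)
The plan is to deduce both statements from the corresponding results for honest (non-orbifold) vector bundles by pulling back to local smooth ramified covers and then descending, following the strategy of \cite{DPS94} adapted to the orbifold setting as in \cite{Wu23}. For part $(1)$, I would first observe that the hypotheses are stable under pullback: if $h$ is a family of metrics on $\mathcal{F}$ realizing weak positivity, then on each chart $\pi_\alpha\colon \tilde U_\alpha \to U_\alpha$ the pulled-back metrics $h_\alpha$ on $(\pi_\alpha^*\mathcal{F})^{**}$ realize weak positivity there, and $c_1(\mathcal{F})=0$ pulls back to $c_1((\pi_\alpha^*\mathcal{F})^{**})=0$ since $\pi_\alpha$ is quasi-\'etale (so it preserves first Chern classes of reflexive sheaves, being an isomorphism in codimension $1$). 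The point is then that a weakly positively curved vector bundle on a compact K\"ahler manifold with vanishing first Chern class is nef with nef dual — this is the content of \cite[Corollary 2]{Wu23} (the K\"ahler analogue of \cite[Theorem 1.18]{DPS94}) applied on $\tilde U_\alpha$ after passing to a compact model, or it is stated directly for the orbifold. One must check that the nef orbifold metrics produced on each chart are compatible with the orbifold structure, i.e.\ $G_\alpha$-invariant; this follows because the construction (smoothing by regularization and solving the relevant Monge-Amp\`ere type estimates) can be carried out $G_\alpha$-equivariantly, or because uniqueness of the limiting objects forces invariance. Gluing the resulting smooth orbifold metrics on $\mathcal{O}_{\PP(F)}(1)$ over the cover $\mathcal{U}$ then exhibits $\mathcal{F}$ as numerically flat in the sense of Definition \ref{orb-metric}.

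For part $(2)$, the ``if'' direction is straightforward: given a filtration \eqref{fil-herm} with Hermitian flat quotients, each $\mathcal{F}_k/\mathcal{F}_{k-1}$ is numerically flat (a Hermitian flat orbifold bundle is manifestly nef with nef dual, its Chern curvature vanishing on each local cover), and numerical flatness is stable under extensions — an extension of a nef bundle by a nef bundle is nef, and dually — so by induction up the filtration $\mathcal{F}$ is numerically flat. The ``only if'' direction is the substantive one. Here I would mimic the proof of \cite[Theorem 1.18]{DPS94}: if $\mathcal{F}$ is numerically flat, consider a maximal destabilizing subsheaf with respect to a fixed orbifold K\"ahler polarization $\omega$; numerical flatness forces every such subsheaf to have slope $0$ and to be again numerically flat (in particular locally free as an orbifold bundle, reflexivity plus the curvature bound giving local freeness on each chart), and the quotient is likewise numerically flat of slope $0$. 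One then argues that the minimal-rank, slope-$0$ numerically flat orbifold subbundle is actually Hermitian flat: on each chart $\tilde U_\alpha$ this follows because a stable numerically flat bundle of slope $0$ on a compact K\"ahler manifold admits a Hermite-Einstein metric which, by the Chern class vanishing, is flat (Kobayashi-Hitchin / Uhlenbeck-Yau, or the more elementary argument in \cite{DPS94}), and $G_\alpha$-equivariance of the Hermite-Einstein metric comes from its uniqueness. Peeling off this flat orbifold subbundle $\mathcal{F}_1$ and inducting on the rank produces the filtration \eqref{fil-herm}.

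The main obstacle I anticipate is bookkeeping the orbifold/equivariant structure throughout — specifically, ensuring that the analytic constructions on the local smooth ramified covers $\tilde U_\alpha$ (regularized metrics realizing nefness in part $(1)$, Hermite-Einstein metrics on stable subquotients in part $(2)$) are $G_\alpha$-equivariant and patch across overlaps to genuine orbifold objects, and that the notions of stability and slope used in the Harder-Narasimhan argument for orbifold sheaves are the right ones and behave compatibly under pullback by $\pi_\alpha$. Since \cite{Wu23} (Corollary 2 and Theorem D there) is precisely set up to handle these compatibility issues, the honest content of the present proof is to verify that our hypotheses feed into that machinery: weak positive curvature of $\mathcal{F}$ as a sheaf together with $c_1(\mathcal{F})=0$ are exactly the input of \cite[Corollary 2]{Wu23}, and the structure theorem for numerically flat orbifold bundles is \cite[Theorem D]{Wu23}. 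Thus, modulo recalling these statements in the orbifold language of Subsection \ref{subsec2-4}, the theorem follows.
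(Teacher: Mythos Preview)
Your final paragraph is essentially the paper's proof: the theorem is a direct citation of \cite[Corollary 2, Theorem D]{Wu23}, and the only work is to check that the hypotheses match. The paper does exactly this for part $(1)$: from the metrics $\{h_\e\}$ witnessing that $\cF$ is weakly positively curved as a sheaf, it passes to the induced metrics on $\OX_{\PP(F)}(1)$ and verifies these satisfy Definition \ref{orb-metric} (pseudo-effective in the strong sense), then invokes \cite[Lemma 1]{Wu23} to replace the K\"ahler form $\omega$ by an orbifold K\"ahler form modulo a $\ddbar$-exact term with continuous potential, so that \cite[Corollary 2]{Wu23} applies. Part $(2)$ is quoted verbatim from \cite[Theorem D]{Wu23}. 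You should make this bridging step (sheaf-metric $\Rightarrow$ orbifold metric on $\OX_{\PP(F)}(1)$, and the K\"ahler form adjustment) explicit rather than asserting the inputs ``are exactly'' those of \cite{Wu23}.

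The first two thirds of your proposal, by contrast, sketch a \emph{re-proof} of \cite[Corollary 2, Theorem D]{Wu23} by working chart-by-chart on the $\tilde U_\alpha$, and this route has a genuine gap. Nefness, numerical flatness, Harder--Narasimhan filtrations, and Hermite--Einstein metrics are global objects on a compact space; you cannot produce them on the non-compact local covers $\tilde U_\alpha$ (your parenthetical ``after passing to a compact model'' has no obvious meaning here) and then glue. The whole point of \cite{Wu23} is to run these arguments globally on the compact orbifold $X$ and the compact orbifold $\PP(F)$, with equivariance handled once and for all by the orbifold formalism. So drop the chart-by-chart sketch and keep only your last paragraph, augmented by the explicit passage from Definition \ref{def-psef} to Definition \ref{orb-metric} via $\OX_{\PP(F)}(1)$ and \cite[Lemma 1]{Wu23}.
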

\begin{proof}
Let $\{h_{\e}\}_{\e>0}$ be singular Hermitian metrics  on $\cF$ satisfying
the definition of weakly positively curved sheaves (see Definition \ref{def-psef}).
We can easily see that the  metrics on $\OX_{\PP(F)}(1)$ induced by $\{h_{\e}\}_{\e>0}$
satisfies the definition of pseudo-effective orbifold vector bundles in the strong sense.
By \cite[Lemma 1]{Wu23}, a K\"ahler form on $X$ defines an orbifold K\"ahler form
modulo $ \ddbar$-exact forms with continuous potential.
In particular, we may assume that $\omega$ in Definition \ref{orb-metric}
is an orbifold K\"ahler form.
Thus, Conclusion (1) follows from \cite[Corollary 2]{Wu23}.
Conclusion (2) is a direct consequence of  \cite[Theorem D]{Wu23}.
\end{proof}

\begin{cor} \label{cor-flat}
Let $\cF$ be a numerically flat orbifold vector bundle on a compact K\"ahler orbifold $X$.
Assume that $\pi_{1}(X_{\reg})=\{\id\}$ and $H^1(X_\reg, \mathcal{O}_{X})=0$.
Then, the sheaf $\cF$ is a trivial vector bundle on $X$.
\end{cor}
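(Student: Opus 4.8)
The plan is to push the whole problem down to the smooth locus $X_{\reg}$, where the orbifold structure is trivial, and then cash in the two vanishing hypotheses $\pi_{1}(X_{\reg})=\{\id\}$ and $H^{1}(X_{\reg},\mathcal{O}_{X})=0$. First I would apply Theorem~\ref{thm-flat}~$(2)$ to the numerically flat orbifold vector bundle $\cF$, obtaining a filtration
\[
0=\mathcal{F}_{0}\subset \mathcal{F}_{1}\subset \cdots \subset \mathcal{F}_{m}=\cF
\]
by orbifold vector bundles whose successive quotients $\mathcal{G}_{k}:=\mathcal{F}_{k}/\mathcal{F}_{k-1}$ are Hermitian flat orbifold vector bundles.

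Next I would restrict everything to $X_{\reg}$. The key observation is that each local ramified cover $\pi_{\alpha}\colon \tilde{U}_{\alpha}\to U_{\alpha}$ is quasi-\'etale, hence \'etale in codimension one; since $U_{\alpha}\cap X_{\reg}$ is smooth, purity of the branch locus (Zariski--Nagata) forces $\pi_{\alpha}$ to be genuinely \'etale over $U_{\alpha}\cap X_{\reg}$ (equivalently, by Chevalley--Shephard--Todd, a smooth quotient point of a quasi-\'etale chart has trivial stabilizer). Consequently the orbifold locus is contained in $X_{\sing}$, so $\cF|_{X_{\reg}}$ and all the $\mathcal{F}_{k}|_{X_{\reg}}$, $\mathcal{G}_{k}|_{X_{\reg}}$ are honest holomorphic vector bundles, the restricted sequences $0\to \mathcal{F}_{k-1}|_{X_{\reg}}\to \mathcal{F}_{k}|_{X_{\reg}}\to \mathcal{G}_{k}|_{X_{\reg}}\to 0$ remain exact, and the flat orbifold metric on each $\mathcal{G}_{k}$ descends along these \'etale charts to a genuine Hermitian metric of vanishing Chern curvature on $\mathcal{G}_{k}|_{X_{\reg}}$. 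Thus each $\mathcal{G}_{k}|_{X_{\reg}}$ is a Hermitian flat vector bundle on $X_{\reg}$.

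Then I would trivialize inductively. A Hermitian flat vector bundle on $X_{\reg}$ is given by a unitary representation of $\pi_{1}(X_{\reg})=\{\id\}$, hence is holomorphically trivial, so $\mathcal{G}_{k}|_{X_{\reg}}\cong \mathcal{O}_{X_{\reg}}^{\oplus r_{k}}$. Assuming $\mathcal{F}_{k-1}|_{X_{\reg}}\cong \mathcal{O}_{X_{\reg}}^{\oplus s}$, the extension $0\to \mathcal{O}_{X_{\reg}}^{\oplus s}\to \mathcal{F}_{k}|_{X_{\reg}}\to \mathcal{O}_{X_{\reg}}^{\oplus r_{k}}\to 0$ has class in $\operatorname{Ext}^{1}_{X_{\reg}}(\mathcal{O}_{X_{\reg}}^{\oplus r_{k}},\mathcal{O}_{X_{\reg}}^{\oplus s})\cong H^{1}(X_{\reg},\mathcal{O}_{X_{\reg}})^{\oplus r_{k}s}=0$, so the sequence splits and $\mathcal{F}_{k}|_{X_{\reg}}$ is trivial; by induction $\cF|_{X_{\reg}}\cong \mathcal{O}_{X_{\reg}}^{\oplus r}$ with $r=\rank\cF$. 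Finally, since $X$ is normal with $\codim(X\setminus X_{\reg})\geq 2$ and $\cF$ is reflexive, we have $\cF=j_{*}(\cF|_{X_{\reg}})$ for the open immersion $j\colon X_{\reg}\hookrightarrow X$, whence $\cF\cong j_{*}(\mathcal{O}_{X_{\reg}}^{\oplus r})=\mathcal{O}_{X}^{\oplus r}$, which is the asserted triviality.

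I expect the main obstacle to be the second step, namely verifying that passing to $X_{\reg}$ really erases the orbifold structure and that a Hermitian flat orbifold vector bundle restricts to a Hermitian flat vector bundle on $X_{\reg}$ --- rather than only a flat bundle carrying nontrivial finite monodromy around the orbifold locus. This is exactly where one must use that the charts are quasi-\'etale, so that the orbifold locus sits in $X_{\sing}$; once this is in place, the remaining steps are routine manipulations with reflexive sheaves and the two cohomological vanishings.
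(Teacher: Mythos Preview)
Your proof is correct and follows essentially the same line as the paper's: take the filtration of Theorem~\ref{thm-flat}~(2), observe that on $X_{\reg}$ the Hermitian flat quotients are genuine flat vector bundles and hence trivial by $\pi_{1}(X_{\reg})=\{\id\}$, split the successive extensions using $H^{1}(X_{\reg},\mathcal{O}_{X})=0$, and conclude by reflexivity. You are in fact more careful than the paper about the point you flagged as the main obstacle---namely, why the orbifold structure disappears on $X_{\reg}$---invoking purity of the branch locus for the quasi-\'etale charts; the paper simply asserts that $\mathcal{F}_{k}/\mathcal{F}_{k-1}$ is a Hermitian flat vector bundle on $X_{\reg}$ without further comment.
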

\begin{proof}
Take the filtration as in Theorem \ref{thm-flat}.
The quotient sheaf $\mathcal{F}_k/\mathcal{F}_{k-1}$ is a Hermitian flat vector bundle on $X_{\reg}$ (but not necessarily on $X$);
therefore, it is induced by a $\rm{GL}$-representation of the (topological) fundamental group $\pi_{1}(X_{\reg})$,
which indicates that $\mathcal{F}_k/\mathcal{F}_{k-1}$ is a trivial vector bundle.
The extension class of $0 \to \mathcal{F}_k \to \cF \to \mathcal{F}_k/\mathcal{F}_{k-1} \to 0$
(which is an exact sequence of vector bundles on $X_{\reg}$) is trivial by $H^1(X_\reg, \mathcal{O}_{X})=0$.
This indicates that $\cF$ is trivial on $X_{\reg}$, and thus it is trivial on $X$ by reflexivity.
\end{proof}

\section{$\mathbb{Q}$-Conic Bundles and the Minimal Model Program}\label{Sec3}

$\mathbb{Q}$-conic bundles naturally appear as an outcome of the MMP in our situation
(see Subsection \ref{subsec3-3} for details).
For this reason, we respectively study $\mathbb{Q}$-conic bundles and conic bundles
in Subsections  \ref{subsec3-2} and \ref{subsec3-1}.
Furthermore, we clarify what the nefness of anti-canonical bundles brings to the geometry of $\mathbb{Q}$-conic bundles.

\subsection{$\mathbb{Q}$-conic bundles}\label{subsec3-2}
In this subsection, following \cite{MP08a, MP08b, Pro07},
we summarize the basic properties of $\QQ$-conic bundles.
We first review the definition of $\QQ$-conic bundles.

\begin{defi}\label{def-Qconic}
(1)
Let $X$  and $S$ be normal analytic varieties.
A fibration $\varphi\colon X \to S$ is called \textit{a $\QQ$-conic bundle} if it satisfies following conditions:
\begin{itemize}
    \item $X$ has terminal singularities;
    \item $\varphi\colon X \to S$ is equi-dimensional and of relative dimension $1$;
    \item $-K_X$ is $\varphi$-ample.
\end{itemize}
Throughout this paper, except for Subsection \ref{subsec3-1},
we consider only a $\QQ$-conic bundle $\varphi\colon X \to S$ with $\dim X=3$.

(2)
\textit{The discriminant divisor $\Delta$} of a $\QQ$-conic bundle $\varphi\colon  X \to S$  is defined by
the union of divisorial components of the non-smooth locus
$
\{s \in S \, |\, \varphi \mathrm{\; is \; not \; a\;  smooth\;  fibration \; at \;s}\}.
$

(3)
A $\QQ$-conic bundle $\varphi\colon  X \to S$ is said to be \textit{toroidal} at $s \in S$
with respect to $\mu_m:=\mathbb{Z}/m\mathbb{Z}$
if  $X$ is isomorphic to the quotient  $\PP^1 \times \CC^2/\mu_m$ over a neighborhood of $s$.
Here the $\mu_m$-action is defined by
$$
(t; z_1, z_2) \to (\e^b t; \e z_1, \e^{-1} z_2),
$$
where $b$ is an integer with $\mathrm{gcd} (m,b)=1$ and $\e$ is the primitive $m$-th root of unity.
In this case, the singularities of $X$ are cyclic quotient singularities of types $(1/m)(b,1,-1)$ and $(1/m)(-b,1,-1)$;
furthermore, the singularities of the base $S \cong \CC^2/\mu_m$ are the cyclic quotient of type $A_{m-1}$.
\end{defi}

A $\QQ$-conic bundle $\varphi\colon X \to S$  with $\dim X=3$ can be explicitly described locally over $S$.
The following corollary is a direct consequence of the classification \cite[Corollary 10.85]{Pro18}.

\begin{lemm} [{\cite[Corollary 10.85]{Pro18}}]\label{MP}
Let $\varphi\colon  X \to S$ be a $\QQ$-conic bundle with $\dim X=3$ and  $\Delta \subset S$
be the discriminant divisor.
Then $s \not \in \Delta$ if and only if $\varphi\colon  X \to S$ is toroidal at $s$.
\end{lemm}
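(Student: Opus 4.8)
The statement to prove is Lemma \ref{MP}: for a $\QQ$-conic bundle $\varphi\colon X\to S$ with $\dim X=3$ and discriminant divisor $\Delta\subset S$, a point $s\in S$ lies outside $\Delta$ if and only if $\varphi$ is toroidal at $s$. The plan is to extract this from the local classification of three-dimensional $\QQ$-conic bundle germs due to Mori--Prokhorov, as packaged in \cite[Corollary 10.85]{Pro18}. One implication is essentially formal: if $\varphi$ is toroidal at $s$, then near $s$ we have $X\cong \PP^1\times\CC^2/\mu_m$ and $S\cong\CC^2/\mu_m$, and the branch locus of $\varphi$ (the image of the ramification of the induced map on the smooth cover $\PP^1\times\CC^2\to\CC^2$) has codimension $\geq 2$ downstairs; in fact the only fibers that degenerate are over the image of the origin, which is a point, hence not divisorial. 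So $s\notin\Delta$ by the very definition of the discriminant divisor as the \emph{divisorial} part of the non-smooth locus.

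For the converse, suppose $s\notin\Delta$. First I would pass to the analytic germ $(S,s)$ and invoke the classification: by \cite[Corollary 10.85]{Pro18}, a $\QQ$-conic bundle germ over $(S,s)$ is, up to the finitely many explicitly listed cases, either toroidal or else has a discriminant curve passing through $s$ (equivalently, the general fiber over a curve through $s$ is singular). The key point is to read off from that classification that the only case with \emph{empty} local discriminant divisor at $s$ is the toroidal one: in every non-toroidal case on the list, the classification exhibits a nontrivial discriminant curve germ through $s$, so $s\in\Delta$. Thus $s\notin\Delta$ forces the toroidal case, and one records the precise form of the $\mu_m$-action and the resulting quotient singularities as in Definition \ref{def-Qconic}(3).

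The main obstacle is that this is not really a self-contained argument but a careful bookkeeping exercise against the Mori--Prokhorov classification: I must make sure I am using the correct notion of discriminant divisor (divisorial part of the non-smooth locus, which is exactly what allows codimension-two degeneracy to be invisible to $\Delta$), and I must verify that \cite[Corollary 10.85]{Pro18} is stated in a form that directly correlates ``$s$ not on the discriminant'' with ``toroidal germ.'' Concretely, the subtlety is the local-versus-global distinction and the $\QQ$-factorial/terminal hypotheses: the classification applies to the germ, and I should note that terminality of $X$ plus $\varphi$-ampleness of $-K_X$ plus equidimensionality of relative dimension $1$ are exactly the standing hypotheses of that classification, so no extra reduction is needed. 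Once the dictionary is set up, the proof is a one-line appeal: $s\notin\Delta \iff$ no discriminant curve through $s$ $\iff$ (by the classification) toroidal germ at $s$.
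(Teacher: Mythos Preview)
Your proposal is correct and follows exactly the paper's approach: the paper gives no self-contained proof at all, simply stating that the lemma ``is a direct consequence of the classification \cite[Corollary 10.85]{Pro18}.'' Your write-up is thus a more detailed unpacking of that same citation, with the forward implication made explicit via the codimension-two degeneracy in the toroidal model and the converse read off from the Mori--Prokhorov list; nothing further is needed.
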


\subsection{Conic bundles} \label{subsec3-1}

In this subsection, we review conic bundles as presented in \cite{Sar82} and extend certain properties to the non-projective case
(see Propositions \ref{basic-conic-bundle} and \ref{push-forward-formula}).
We derive Proposition \ref{push-forward-formula} from Proposition \ref{basic-conic-bundle}.
Proposition \ref{basic-conic-bundle} is proved by the same argument as in \cite{Sar82} even in the non-projective case;
hence we omit the detailed proof.

\begin{defi} \label{conic-bundle}
A $\mathbb{Q}$-conic bundle $\varphi\colon  X \to S$ is called a \textit{conic bundle} if
$X$ and $S$ are smooth.
\end{defi}

\begin{prop} \label{basic-conic-bundle}
Let $\varphi\colon  X \to S$ be a conic bundle. Then we have$:$
\begin{enumerate}
\item
$-K_X$ is $\varphi$-very ample;
$E := \varphi_* (-K_X)$ is a locally free sheaf of rank $3$;
$E = \varphi_* (-K_X)$ defines an embedding of $\varphi\colon  X \to S$ into $p\colon \mathbb{P}(E) \to S$$:$
\begin{equation*}
\xymatrix{
X \ar@{^{(}->}@<0.3ex>[rr] \ar[rd]_{\varphi}&&  \PP(E)\ar[ld]^{p} \\
&S.   &
}
\end{equation*}
Furthermore, the scheme-theoretic fiber $X_s$ at a point $s \in S$ is a $($possibly reducible or non-reduced$)$ conic
on the projective plane $\PP(E_s)(\cong \mathbb{P}^{2})$.

\item  $X \subset \PP(E )$ can be written as the zero locus of a section
$$\sigma \in H^0(\PP(E), \OX_{\PP(E)}(2) \otimes p^*(-\det E-K_S)).$$

\item We identify $\PP(E )$ with $\PP^2 \times U$ over a small open subset $U \subset S$ with a coordinate $z$.
Then, the embedding $X \subset \PP(E ) = \PP^2 \times U$ over $U$
can be written as
$$
X = \{([x_{0} : x_{1} : x_{2}], z) \in \PP^2 \times U \,|\,\sum_{0 \leq i,j \leq 2} a_{i,j}(z) x_i x_j = 0\},
$$
where $a_{i,j} \in \OX_{S}(U)$.
Furthermore, the discriminant divisor $\Delta$ coincides  with
the non-smooth locus
$
\{s \in S \, |\, \varphi \mathrm{\; is \; not \; smooth \; at \;s}\}
$
and can be described as
$$
\Delta = \{z \in S  \,|\, \det [a_{i,j}(z)]_{i,j=0}^{2}  = 0\}.
$$

\item For a given point $s \in U$, by changing a coordinate $z$ of  $ U$ and a basis of $E$,
we can assume that $a_{i,j}=0$ for $i \not =j$.
Furthermore, we can assume that
\begin{itemize}
\item $a_{i,i} \in \OX_{Y}^*(U)$ for any $0\leq i \leq 2$ when $X_{s}$ is smooth;
\item $a_{i,i} \in \OX_{Y}^*(U)$ for any $1\leq i \leq 2$  and ${\rm{mult}}_{s}(a_{0,0})=1$
when $X_{s}$ is reduced and reducible.
\end{itemize}

\item The discriminant divisor $\Delta$  is normal crossing in codimension $2$ in $S$.
Furthermore, the fiber $X_s$ of a general point $s \in \Delta$ is reduced and reducible.

\item $c_1(\Delta)=-c_1(\det E)-3c_1(K_S)$ holds.
\end{enumerate}
\end{prop}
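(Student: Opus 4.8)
The plan is to follow the classical Sarkisov analysis \cite{Sar82}, checking at each point that projectivity is never actually used, so the arguments go through verbatim in the K\"ahler category; below I indicate where the substantive input lies. For (1): since $\varphi$ is a $\mathbb{Q}$-conic bundle with $X$ and $S$ smooth, the fibers are one-dimensional Gorenstein curves of arithmetic genus $0$ with $-K_{X_s}$ ample of degree $2$, so $-K_X$ is $\varphi$-very ample and $R^1\varphi_*(-K_X)=0$; cohomology and base change then give that $E=\varphi_*(-K_X)$ is locally free of rank $h^0(X_s,-K_{X_s})=3$, and the relative evaluation map $\varphi^*E\to \mathcal O_X(-K_X)$ furnishes the closed embedding $X\hookrightarrow \PP(E)$ with $X_s$ a conic in $\PP(E_s)$. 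All of this is local over $S$ and uses only properness and flatness, not an ample bundle on $X$. For (2): the relative ideal sheaf of $X$ in $\PP(E)$ is computed by the Koszul resolution $0\to \OX_{\PP(E)}(-2)\otimes p^*(\det E+K_S)\to \OX_{\PP(E)}\to \OX_X\to 0$ (the twist is forced by adjunction $K_X=(K_{\PP(E)}+X)|_X$ together with $K_{\PP(E)/S}=-3\OX_{\PP(E)}(1)+p^*\det E$), so $X$ is cut out by a section of $\OX_{\PP(E)}(2)\otimes p^*(-\det E-K_S)$.

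Parts (3) and (4) are purely local and coordinate-theoretic: trivializing $E$ over a small $U\subset S$ identifies $\PP(E)|_U$ with $\PP^2\times U$ and writes $X$ as the zero locus of a quadratic form $\sum a_{i,j}(z)x_ix_j$ with $a_{i,j}\in\OX_S(U)$; the non-smooth locus of $\varphi$ is where the fiber conic degenerates, i.e.\ where the symmetric matrix $[a_{i,j}(z)]$ drops rank, which is exactly $\{\det[a_{i,j}]=0\}$, and this set is a divisor (when nonempty) equal to $\Delta$ by Definition \ref{def-Qconic}(2) since in dimension one ``non-smooth locus'' and its divisorial part agree when $X,S$ are smooth. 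Near a fixed $s\in U$ one diagonalizes the quadratic form over the local ring $\OX_{S,s}$ (a regular, hence UFD, local ring), which gives the normal forms in (4): a smooth fiber corresponds to all three diagonal entries being units, while a reduced reducible fiber (rank-$2$ conic, a pair of distinct lines) corresponds to two units and one entry of multiplicity one after a coordinate change — the multiplicity-one statement uses that the fiber is reduced, equivalently that $\Delta$ is smooth at $s$ with $X_s$ the generic degeneration.

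For (5): the statement that $\Delta$ is normal crossing in codimension $2$ and that the general fiber over $\Delta$ is reduced and reducible is again local on $S$ and proved exactly as in \cite{Sar82}, by the local classification of rank-$\leq 1$ degenerations of families of conics over a regular two-dimensional base; the codimension-$2$ restriction is what makes the base effectively a surface, where the only degenerations to analyze are rank-$2$ conics along the smooth locus of $\Delta$ and rank-$1$ conics (double lines) along codimension-$2$ strata, and one checks the transversality of the two local branches of $\Delta$ at such points. Part (6) is then a clean consequence: applying $\varphi_*$ to the Koszul sequence of (2), or directly using that the discriminant of the quadratic form $[a_{i,j}]$ is a section of $(\det E)^{\otimes 3}\otimes (\det E)^{-?}$ — concretely, $\det[a_{i,j}]$ transforms as a section of $\OX_S(\Delta)$ and simultaneously, being a cubic in the entries of a matrix of sections of $E^*\otimes E^*\otimes\det E$ adjusted by the $K_S$-twist from (2), as a section of $(-\det E-K_S)^{\otimes 3}\otimes (\det E)^{\otimes ?}$ — so matching the two descriptions yields $c_1(\Delta)=-c_1(\det E)-3c_1(K_S)$ in $H^{1,1}_{BC}(S,\CC)$. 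The cleanest route is: from (2) the section $\sigma$ defining $X$ lives in $\OX_{\PP(E)}(2)\otimes p^*(-\det E-K_S)$, and the classical identity expressing the conic-bundle discriminant as the vanishing of the determinant of the associated symmetric bilinear form shows $\OX_S(\Delta)\cong \det\bigl(\varphi_*\OX_X(-K_X)\bigr)^{\otimes(-1)}\otimes(-K_S)^{\otimes(-3)}\otimes(\text{line bundle with trivial }c_1)$, giving the Chern class identity.

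I expect the main obstacle to be part (5) — specifically, verifying the codimension-$2$ normal-crossing property and the reducedness/reducibility of the general fiber over $\Delta$ — since this is the one place where a genuine local-analytic classification of degenerating conic families is needed rather than formal homological bookkeeping; however, as the authors note, this classification in \cite{Sar82} is local and does not use projectivity, so the only real work is to transcribe it into the K\"ahler/analytic setting, which is routine. The Chern-class computation in (6) is a close second in terms of bookkeeping care (tracking the $\det E$ versus $-\det E-K_S$ twists), but is otherwise formal.
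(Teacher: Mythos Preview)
Your proposal is correct and takes the same approach as the paper, which simply cites the relevant parts of Sarkisov \cite{Sar82} item by item and remarks that the arguments are local over $S$ and carry over unchanged to the non-projective setting. The only slip is in your sketch of (6): your displayed isomorphism for $\OX_S(\Delta)$ has the $K_S$ sign wrong---viewing $\sigma$ as a symmetric map $E^*\to E\otimes L$ with $L=-\det E-K_S$, its determinant lives in $(\det E)^2\otimes L^3=-\det E-3K_S$, which is what yields the stated identity $c_1(\Delta)=-c_1(\det E)-3c_1(K_S)$.
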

\begin{proof}
Points (1) and (2) are due to \cite[1.5]{Sar82}.
Point (3) is due to \cite[Proposition 1.7]{Sar82}.
Point (4) is due to \cite[Proposition 1.8, Point 5]{Sar82}.
Point (6) is due to \cite[Definition 1.6]{Sar82}.
The divisor $\Delta$ is reduced by \cite[Corollary 1.9]{Sar82}.
The first statement of Point (5) follows
from the proof of \cite[Proposition 1.8, Point 5]{Sar82} (see also \cite[3.3.3]{Pro18}).
The second statement of Point (5) follows from \cite[Proposition 1.8, Point 4]{Sar82}.
\end{proof}

\begin{prop} \label{push-forward-formula}
 Let $\varphi\colon  X \to S$ be a conic bundle. Then, we have
$$\varphi_*(c_1(K_X)^2)=-4c_1(K_S)-c_1(\Delta).$$
\end{prop}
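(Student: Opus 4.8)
The plan is to carry out the computation inside the $\PP^2$-bundle $p\colon \PP(E)\to S$ with $E:=\varphi_*(-K_X)$, reducing the identity to a formal Chern-class manipulation and feeding in the structural facts from Proposition \ref{basic-conic-bundle}.

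First I would record what Proposition \ref{basic-conic-bundle} provides: $E=\varphi_*(-K_X)$ is locally free of rank $3$; writing $i\colon X\hookrightarrow \PP(E)$ for the inclusion and $\xi:=c_1(\mathcal{O}_{\PP(E)}(1))$, the divisor $X$ belongs to $|2\xi+p^*(-\det E-K_S)|$, i.e. $\mathcal{O}_{\PP(E)}(X)\cong\mathcal{O}_{\PP(E)}(2)\otimes p^*(-\det E-K_S)$; and $\varphi=p\circ i$. Using the relative canonical bundle formula $K_{\PP(E)/S}\cong\mathcal{O}_{\PP(E)}(-3)\otimes p^*\det E$ together with adjunction $K_X=(K_{\PP(E)}+X)|_X$, the terms pulled back from $S$ cancel and I obtain $c_1(K_X)=-\xi|_X$; this is consistent with $-K_X$ being $\varphi$-very ample and $\varphi_*(-K_X)=E$.

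Next I would push the class $c_1(K_X)^2=i^*(\xi^2)$ forward along $\varphi=p\circ i$. The projection formula gives $i_*\big(i^*(\xi^2)\big)=\xi^2\cdot[X]$, so
\[
\varphi_*\big(c_1(K_X)^2\big)=p_*\big(\xi^2\cdot(2\xi-p^*\det E-p^*K_S)\big)=2\,p_*(\xi^3)-c_1(E)\,p_*(\xi^2)-c_1(K_S)\,p_*(\xi^2),
\]
where I used the projection formula again and $c_1(\det E)=c_1(E)$. By the standard push-forward relations for a rank-$3$ projective bundle, $p_*(\xi^2)=1$ and $p_*(\xi^3)=c_1(E)$, so the right-hand side equals $c_1(E)-c_1(K_S)$. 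Finally I would substitute Proposition \ref{basic-conic-bundle}(6), namely $c_1(\Delta)=-c_1(\det E)-3c_1(K_S)$, i.e. $c_1(E)=-c_1(\Delta)-3c_1(K_S)$, to conclude $\varphi_*(c_1(K_X)^2)=-4c_1(K_S)-c_1(\Delta)$.

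No genuine obstacle arises; the proof is essentially bookkeeping. The step demanding the most care is fixing the convention for $\PP(E)$ consistently --- the Grothendieck convention with $p_*\mathcal{O}_{\PP(E)}(1)=E$, which is the one forced here by $E=\varphi_*(-K_X)$ --- so that the signs in the relative canonical bundle formula and in the push-forward relations $p_*(\xi^2)=1$, $p_*(\xi^3)=c_1(E)$ come out correctly. Everything used (the projective bundle formula with these push-forward relations, the relative canonical bundle formula, adjunction, and the content of Proposition \ref{basic-conic-bundle}) is valid over an arbitrary compact complex manifold $S$, so the possibly non-projective nature of $S$ is irrelevant.
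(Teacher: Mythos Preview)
Your proof is correct and considerably more direct than the paper's. The paper does not carry out the computation cohomologically inside $\PP(E)$; instead it establishes the refined current identity
\[
p_*\bigl(c_1(\mathcal{O}_{\PP(E)}(1),h)^2\wedge[X]\bigr)=\tfrac{4}{3}\,c_1(\det E,\det h)+\tfrac{1}{3}[\Delta]
\]
for any smooth metric $h$ on $E$: first on $S\setminus\Delta$ by an explicit fiber-integral calculation (reducing to the model conic $x_0^2+x_1^2+x_2^2=0$ via Proposition~\ref{basic-conic-bundle}(4)), and then pinning down the coefficient along each component $\Delta_i$ by showing it is metric-independent and computing it through a local approximation of $X\to S$ by \emph{projective} conic bundles $X_N\to\PP^n$, to which the known projective case \cite[4.11]{Mi83} applies. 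Your route bypasses all of this: once one grants that the Grothendieck relation and the push-forward identities $p_*(\xi^2)=1$, $p_*(\xi^3)=c_1(E)$ hold for a holomorphic $\PP^2$-bundle over an arbitrary complex manifold --- which they do, the argument being formal --- the statement reduces to the three-line Chern-class manipulation you give. What the paper's approach buys is the pointwise current equation, which is stronger than the cohomological identity; but for the sole application in Corollary~\ref{push-forward-sing} only the cohomology-level statement is used, so your argument suffices. One small remark: you write ``compact complex manifold $S$'', but compactness plays no role in your argument and the proposition is in fact applied over the non-compact open set $S_0$ in Corollary~\ref{push-forward-sing}; the projective-bundle identities you invoke are valid over any base.
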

\begin{proof}
The proposition is proved in the projective case (see \cite[4.11]{Mi83}).
We extend the discussion of \cite[4.11]{Mi83} to the non-projective case.
By Proposition \ref{basic-conic-bundle} (6),
the conclusion is equivalent to
$$
\varphi_*(c_1(K_X)^2)=\frac{4}{3}c_1(\det E)+\frac{1}{3}c_1(\Delta).
$$
We fix a smooth Hermitian metric $h$ on $E= \varphi_* (-K_X)$
and use the same notation $h$ to denote the induced metric on $\OX_{{\mathbb{P}(E)}}(1)$.
Then, by the adjunction formula and Proposition \ref{basic-conic-bundle} (2),
the conclusion is equivalent to the following formula:
\begin{align}\label{eq-goal}
p_*(c_1(\OX_{\mathbb{P}(E)}(1),h)^2 \wedge [X])=\frac{4}{3}c_1(\det E, \det h)+\frac{1}{3}[\Delta],
\end{align}
where $[X], [\Delta]$ are the integration currents and
$c_1(\OX_{\mathbb{P}(E)}(1),h), c_1(\det E, \det h)$ are the Chern curvatures divided by $2\pi$.

We first prove the desired formula on $S \setminus \Delta$.
For this purpose, we summarize some formulas for the curvatures of vector bundles
(e.g.,\,see \cite[Section 15.C, Chap.\,V] {agbook}).
Set  $n=\dim S$ and $r:=\rank E (=3)$.
For a given $s \in S\setminus \Delta$, we take a local frame $(e_\lambda )_{\lambda=1}^{r}$ of $E$
giving an orthonormal basis of $E_{s}$ at $s \in S$,
and then write the Chern curvature of $E$ as
\begin{align}\label{eq1}
\Theta_{h}(E) = \sum_{1 \leq j,k \leq n, 1 \leq \lambda, \mu \leq r} c_{jk \lambda \mu} dz^j \wedge d\overline{z}^k \otimes e^*_\lambda \otimes  e_\mu,
\end{align}
where $(z_j)_{j=1}^{n}$ is a local coordinate of $S$.
Let $[x] \in \PP (E_s)$ be the point represented
by a vector $\sum_{\lambda=1}^{r} x_\lambda e^*_\lambda \in E^*_s$
with $\sum_{\lambda=1}^{r} |x_\lambda|^{2}=1$.
Then, the curvature of $\OX_{\PP(E)}(1)$ at $[x]$ can be written as
\begin{align}\label{eq2}
\Theta_{h}(\OX_{\PP(E)}(1))_{[x]} = \sum_{1 \leq j,k \leq n, 1 \leq \lambda, \mu \leq r} c_{jk \lambda \mu} x_\lambda \overline{x}_\mu dz^j \wedge d\overline{z}^k +\sum_{1 \leq \lambda \leq r-1} d \xi_\lambda \wedge d \overline{\xi}_\lambda,
\end{align}
where $(\xi_\lambda)_{\lambda=1}^{r-1}$ is the  coordinate of $\PP (E)$
induced by unitary coordinates on the hyperplane $(\mathbb{C} x)^{\perp} \subset E^*_s$.
We identify $\PP(E)  $ with $\PP^{r-1} \times U$ over an open neighborhood $U \subset S$ of $s$,
and regard the Fubini-Study form $\Omega$  on $\PP^{r-1}$ as the $(1,1)$-form on $\PP(E)$.
Then,  we can easily check that
$$
\frac{\sqrt{-1}}{2 \pi} \Theta_{h}(\OX_{\PP(E)}(1))_{[x]} = \Omega-
\frac{\sqrt{-1}}{2 \pi}\frac{\langle p^*  \Theta_{h^*}(E^*)x, x \rangle }{|x|^2}
\quad \text{ for any } [x] \in p^{-1}(s).
$$
The pushforward of smooth forms can be described as a fiber integration near $s \in S \setminus \Delta$
(e.g.,\,see \cite[Theorem 1.14, Proposition 2.15, Chap.\,II]{GPR94}).
Hence, we obtain
\begin{align*}
p_*(c_1(\OX_{\mathbb{P}(E)}(1),h)^2 \wedge [X])_{s}&=
\int_{X_s} c_1(\OX_{\PP(E)}(1),h)^2 \\
&= 
-2 \cdot   \frac{\sqrt{-1}}{2 \pi} \int_{X_s} \Omega \wedge \frac{\langle p^* \Theta_{h^*}(E^*)x, x \rangle }{|x|^2},
\end{align*}
where $X_s$ is the fiber of $\varphi\colon  X \to S$ at $s \in S$.

We consider the special case of $X_s=\{x_0^2+x_1^2+x_2^2=0\}.$
We can easily see that
$$
\int_{X_s} \frac{x_\lambda \overline{x}_\mu} {|x|^2} \, \Omega =\frac{2}{3} \cdot \delta_{\lambda \mu},
$$
where $\delta_{\lambda \mu}$ is the Kronecker  delta.
Hence, by the formula \eqref{eq1}, we obtain
\begin{align*}
\int_{X_s} c_1(\OX_{\mathbb{P}(E)}(1),h)^2&= \frac{4}{3} \frac{\sqrt{-1}}{2 \pi} \sum_\lambda c_{jk \lambda \lambda} dz^j \wedge d \overline{z}^k \\ &=\frac{4}{3}c_1(\det E, \det h).
\end{align*}
The general case can be reduced to the special case.
Indeed,  by Proposition \ref{basic-conic-bundle} (4), we may assume that
$X_s=\{C_0 x_0^2+ C_1 x_1^2+ C_2x_2^2=0\}$,
where $C_i $ is a non-zero constant.
Hence, we see that $X_s$ is cohomologeous to $\{x_0^2+x_1^2+x_2^2=0\}$ in $\PP(E_s)$.
By the Stokes formula, we can conclude that
\begin{align*}
\int_{X_s} c_1(\OX_{\mathbb{P}(E)}(1),h)^2&= \int_{\{x_0^2+x_1^2+x_2^2=0\}} c_1(\OX_{\mathbb{P}(E)}(1),h)^2\\ &=\frac{4}{3}c_1(\det E, \det h).
\end{align*}
The Fubini theorem shows that
$$
\int_{S} \varphi_{*}(c_1(\OX_{\mathbb{P}(E)}(1),h)^2) \wedge \alpha =\int_{S \setminus \Delta } \alpha  \wedge \int_{X_s} c_1(\OX_{\mathbb{P}(E)}(1),h)^2  $$
for any smooth form $\alpha$ with $\Supp \alpha \Subset S \setminus \Delta$,
which implies that
$$
p_*(c_1(\OX_{\mathbb{P}(E)}(1),h)^2 \wedge [X])=\frac{4}{3}c_1(\det E, \det h)
\text{ on } S \setminus \Delta.
$$

We finally prove the desired formula \eqref{eq-goal} on $S$.
The pushforward $p_*(c_1(\OX_{\mathbb{P}(E)}(1),h)^2 \wedge [X])$ is a normal current;
hence, by applying the support theorem (see Lemma \ref{support-lemma}),
we can find $c_i \in \RR$ such that
$$
\mathbbm{1}_{\Delta} \, p_*(c_1(\OX_{\mathbb{P}(E)}(1),h)^2 \wedge [X])=\sum_i c_i [\Delta_i],
$$
where $\{\Delta_i\}_{i \in I}$ are the irreducible components of $\Delta$.
We show that $c_i=1/3$ for any $i$ by regarding $c_i$
as the generic Lelong number along $\Delta_{i}$.
Note that $c_i$ is independent of the choice of metric $h$ on $E$.
Indeed, let $h'=h e^{-\psi}$ be another smooth Hermitian metric on $\OX_{\mathbb{P}(E)}(1)$ with some $\psi \in C^{\infty}(\PP(E))$.
Then, we have
$$
p_*(c_1(\OX_{\mathbb{P}(E)}(1),h')^2 \wedge [X])-p_*(c_1(\OX_{\mathbb{P}(E)}(1),h)^2 \wedge [X])=\frac{\sqrt{-1}}{2 \pi} \partial \overline{\partial} F,
$$
where $F$ is a function defined by
$$
F(s)=\int_{X_s} \psi  \cdot (c_1(\OX_{\mathbb{P}(E)}(1),h')+c_1(\OX_{\mathbb{P}(E)}(1),h))
\text{ for } s \in S.
$$
To check that the Lelong number is independent of the choice of $h$,
it suffices to show that $F$ is continuous
over any small $1$-dimensional disc passing through a general point of $\Delta$,
but this follows from the theory of cycle spaces (e.g.,\,see \cite[Corollaire 1]{Bar78}).

To finish the proof, for a general point $s \in \Delta_i$,
we  construct a smooth Hermitian metric $h$ on $E$ (e.g.,\,by a partition of unity)
such that  $h$ is flat on a neighborhood of $s$.
Then, by $c_1(\OX_{\mathbb{P}(E)}(1),h)=\Omega$,
it suffices to show that
$$
p_*(\Omega^2 \wedge [X])=\frac{1}{3} [\Delta_i]
$$
on a neighborhood $U$ of $s$.
In the following, we locally approximate $X \to S$ with projective conic bundles $\{X_{N} \to S\}_{N=1}^{\infty}$,
and prove the desired equality by using \cite[4.11]{Mi83}.
(The proposition itself does not seem to be directly derivable from \cite[4.11]{Mi83},
even when considering the following approximate argument.)
Let $z_0 x_0^2+f_1 x_1^2 +f_2 x^2_2$ be
a local defining function of $X \subset \PP(E)$,
where $f_1, f_2 \in \OX^*(U)$.
Let us regard $U \subset S$ as an open subset in $\PP^n$.
Then, we can find $g_N \in H^0(\PP^n \times \PP^{2}, \OX(N)  \boxtimes \OX(2))$
such that
$$
[X \cap \PP(E)]=\lim_{N \to \infty}[X_{N} \cap \PP(E)] \text{ over }U\text{, where }X_{N}:=\{g_{N}=0\}
$$
by using the polynomial  approximation of $f_1$, $f_2$ (e.g.,\,we can use the Taylor expansion).
By the Bertini theorem,
a general member in $H^0(\PP^n \times \PP^{2}, \OX(N)  \boxtimes \OX(2))$
determines a conic bundle over $\PP^n$.
Thus, by replacing $g_{N}$ with the general member, we may assume that
$X_{N}=\{g_{N}=0\} \to \PP^n$ is a conic bundle with discriminant divisor $\Delta_{N}$.
Since $X_{N} \to \PP^n$ is a projective conic bundle,
we have
$$
p_*(\Omega^2 \wedge [X_{N} \cap \PP(E)])=
\frac{1}{3} [\Delta_{N}] \text{ over }U
$$
by  \cite[4.11]{Mi83}.
Consequently, by taking the limit as $N \to \infty$ in the space of currents, we obtain the desired conclusion.
Indeed, the pushforward $p_*(\Omega^2 \wedge \bullet)$ defines a continuous map
from the space of currents on  $U \times \mathbb{P}^2$ to that of $U$,
which implies that the left-hand side converges to $p_*(\Omega^2 \wedge [X \cap \PP(E)])$.
On the other hand, the right-hand side converges to $[\Delta_i]$ by construction. 
\end{proof}

\subsection{Minimal Model Program}\label{subsec3-3}
In this subsection, we review the MMP for K\"ahler three-folds developed in \cite{HP15a, HP15b, HP16},
and observe what the MMP brings to Conjecture \ref{conj-main} (see Corollary \ref{push-forward-sing}).

\begin{theo} [\cite{HP15b}]\label{3-fold-MMP}
Let $X$ be a $\QQ$-factorial compact K\"ahler space of dimension $3$ with terminal singularities.
Assume that $\dim R(X)=2$, where $R(X)$ is the base of an MRC fibration $X \dashrightarrow R(X)$ of $X$.
Then, we have$:$

$(1)$ $X$ is bimeromorphic to an MFS $($Mori fiber space$)$;
more precisely, there exist
$$
\text{
a bimeromorphic map $\pi\colon  X \dashrightarrow X'$ and an MFS $\varphi\colon  X' \to S$ such that
}
$$
\begin{itemize}
\item[(a)] $X \dashrightarrow X'$ is obtained from the composition of divisorial contractions and flips;
\item[(b)] $X'$ is a $\QQ$-factorial compact K\"ahler space  with terminal singularities;
\item[(c)] $S$ is a $\QQ$-factorial compact K\"ahler space of dimension $2$ with klt singularities;
\item[(d)] $S$ is non-uniruled and $K_{S}$ is pseudo-effective;
\item[(e)] $-K_{X'}$ is $\varphi$-ample and the relative Picard number $\rho(X'/S) $ is $1$;
\item[(f)] $\varphi\colon  X' \to S$ is equi-dimensional and of relative dimension $1$.
\end{itemize}

$(2)$ %
The outcome $X \dashrightarrow X'  \to S$ of the MMP
factors through the Albanese map $\alpha\colon X \to A(X)$, that is,
there exists the morphism $\beta\colon S \to A(X)$ with the diagram$:$
$$
\xymatrix{
    X \ar@{.>}[r]^{\pi} \ar[d]_{\alpha} & X' \ar[d]^{\varphi} \\
   A(X)        & \ar[l]_{\beta}  S.}
$$
\end{theo}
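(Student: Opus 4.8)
The plan is to deduce Part (1) from the Minimal Model Program for compact K\"ahler threefolds of \cite{HP15a, HP15b, HP16}, supplemented by a dimension count based on the bimeromorphic invariance of the MRC fibration, and to deduce Part (2) from the fact that the steps of the MMP and the fibers of $\varphi$ are all covered by rational curves, whereas the torus $A(X)$ contains none.

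For Part (1), I would first observe that $X$ is uniruled: since $\dim R(X)=2<3$, the general fiber of an MRC fibration $X\dashrightarrow R(X)$ is positive-dimensional and rationally connected, so $K_X$ is not pseudo-effective (by \cite{BDPP} in the projective case and \cite{Bru06} in the K\"ahler case, as $\dim X=3$). Hence one may run a $K_X$-MMP by \cite{HP15b, HP16}: it terminates, each step is a divisorial contraction or a flip that preserves $\QQ$-factoriality, terminal singularities and the K\"ahler property, and it ends with a Mori fiber space $\varphi\colon X'\to S$. This yields (a), (b), (e), and the $\QQ$-factoriality and K\"ahler conditions in (c). To pin down $\dim S$, note that the MRC fibration is a bimeromorphic invariant, so $\dim R(X')=\dim R(X)=2$. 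If $\dim S\le 1$, the general fiber of $\varphi$ (which is $X'$ itself when $\dim S=0$) is a $\QQ$-factorial terminal Fano threefold or a smooth del Pezzo surface, hence rationally connected; since through a general point the MRC fiber contains every rationally connected subvariety, this forces $\dim R(X')\le\dim S\le 1$, a contradiction. Hence $\dim S=2$, the general fiber $F$ of $\varphi$ is a smooth curve with $-K_F=-K_{X'}|_F$ ample, so $F\cong\PP^1$, and since $\dim F=1=\dim X'-\dim R(X')$ the general $\varphi$-fiber coincides with the general MRC fiber; therefore $S$ is bimeromorphic to $R(X')$, hence to $R(X)$. Equi-dimensionality of relative dimension $1$ in (f) and the klt property of $S$ in (c) then follow from the structure of fiber-type Mori contractions of K\"ahler threefolds in \cite{HP16} (the K\"ahler analogue of the fact that a three-dimensional conic-bundle-type contraction has no divisorial fibers, together with adjunction for the Fano fibration $\varphi$). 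For (d), the base of an MRC fibration is non-uniruled (a general property of MRC fibrations; see \cite{KoMM92, Cam92}), hence so is $S$ because uniruledness is a bimeromorphic invariant; applying \cite{Bru06} to a resolution of $S$ (of dimension $2\le 3$) and pushing the resulting pseudo-effective class forward shows that $K_S$ is pseudo-effective.

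For Part (2), I would show by induction along the steps of the MMP that $\alpha\colon X\to A(X)$ descends. For a divisorial contraction $c\colon X_1\to X_2$, every fiber of $c$ is a connected union of rational curves (the exceptional divisor is swept out by rational curves spanning the contracted extremal ray), so a morphism to the torus $A(X)$ must contract it; since $X_2$ is normal, the rigidity lemma shows that $\alpha_1\colon X_1\to A(X)$ factors through $c$, producing $\alpha_2\colon X_2\to A(X)$. For a flip $X_1\dashrightarrow X_1^{+}$ over a small contraction $X_1\to W\leftarrow X_1^{+}$, the exceptional loci are one-dimensional and covered by rational curves, so $\alpha_1$ factors through $X_1\to W$, and composing with $W\leftarrow X_1^{+}$ gives a morphism $X_1^{+}\to A(X)$. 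Iterating yields $\alpha'\colon X'\to A(X)$ with $\alpha'\circ\pi=\alpha$. Finally, every fiber of the $\QQ$-conic bundle $\varphi\colon X'\to S$ is a conic (possibly reducible or non-reduced), hence a connected union of rational curves, so $\alpha'$ contracts it to a point; since $\varphi$ is a fibration (so $\varphi_*\OX_{X'}=\OX_S$ and $S$ is normal), the rigidity lemma produces the morphism $\beta\colon S\to A(X)$ with $\beta\circ\varphi=\alpha'$, which is the asserted diagram.

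The main obstacle is concentrated in Part (1): guaranteeing that the K\"ahler MMP is available in this generality and, above all, that the resulting fiber-type contraction $\varphi\colon X'\to S$ is equi-dimensional of relative dimension $1$ over a klt base --- this is the K\"ahler counterpart of classical three-dimensional conic-bundle theory and rests essentially on \cite{HP16}. Granting this, the dimension count yielding $\dim S=2$, the verification of (d), and all of Part (2) are comparatively formal.
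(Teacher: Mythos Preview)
Your proof is correct and follows essentially the same approach as the paper's: run the K\"ahler MMP of \cite{HP15b, HP16}, use the MRC dimension hypothesis to pin down $\dim S=2$ and the non-uniruledness of $S$ (whence $K_S$ is pseudo-effective via a resolution and \cite{Bru06}), and argue that every step of the MMP and every fiber of $\varphi$ is spanned by rational curves, so the map to the torus $A(X)$ descends. Your account is if anything more explicit than the paper's, which compresses Part~(2) into a single sentence and leaves the dimension count for $\dim S$ largely implicit.
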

\begin{proof}
By running the MMP in \cite{HP15b} for the initial variety $X$,
we can find a bimeromorphic map $\pi\colon  X \dashrightarrow X'$
such that either $K_{X'}$ is nef
or there exists an MFS $\varphi\colon  X' \to S$.

We can easily exclude the case where $K_{X'}$ is nef.
Indeed, by \cite[Corollary 1.2]{Bru06}, we see that $Y$  is non-uniruled if and only if $K_{Y}$ is pseudo-effective,
where $Y$ is a compact K\"ahler space of dimension $\leq 3$ with terminal singularities.
Hence, by noting that $X'$ has terminal singularities,
if $K_{X'}$ is nef, the variety $X'$ is non-uniruled, which contradicts $\dim R(X)=2$.
Hence, except for (c) and (d), the other properties follow from \cite{HP15b}.

The outcome $X \dashrightarrow X'  \to S$ gives one of MRC fibrations of $X$,
which implies that $S$ is a non-uniruled surface by $\dim R(X)=2$.
The non-uniruledness shows that $K_{S}$ is pseudo-effective.
Indeed, for a minimal resolution $\pi\colon   \bar S \to S$ of $S$,
we have $\pi^{*} K_{S} =K_{\bar S} + E$ for some effective exceptional $\mathbb{Q}$-divisor $E$.
Since $\bar S$ is non-uniruled, we see that $K_{\bar S}$ is pseudo-effective;
hence so is $K_{S}$.

As in the case of the projective case,
all the steps of the MMP (i.e.,\,divisorial contractions, flips, MFSs)
are obtained from contractions of rational curves.
Thus, since the torus $A(X)$ has no rational curve,
the outcome $X \dashrightarrow X' \to S$ factors through the Albanese map $X \to A(X)$.
\end{proof}

Let us briefly examine the positivity of $-K_{X'}$.
Suppose that we start from $X$ with the nef anti-canonical bundle $-K_{X}$,
and obtain a bimeromorphic map $\pi \colon X \dashrightarrow X'$ in Theorem \ref{3-fold-MMP}.
We might expect that $-K_{X'}$ is still nef ``outside the exceptional locus'' of $\pi\colon  X \dashrightarrow X'$,
but proving this is not straightforward.
In fact, after taking a smooth form $T_{\e} \in c_{1}(-K_{X})$  such that $T_{\e} \geq -\e \omega$,
we can obtain $\pi_{*}T_{\e} \geq -\e \pi_{*}\omega$ on $X'$, where $\omega$ is a K\"ahler form on $X$.
However, it is unclear how the current $\pi_{*}\omega$ relates to a K\"ahler form on $X'$.
For this reason, we prepare Lemma \ref{lemma-cohomology-compare} to compare K\"ahler forms on $X$ to those on $X'$.

\begin{setup}\label{setup}
Before stating Lemma \ref{lemma-cohomology-compare}, we fix the notation.
Assume that $X $ in Theorem \ref{3-fold-MMP} is smooth and  $-K_{X}$ is nef.
The bimeromorphic map $X \dashrightarrow X'$ is decomposed as follows:
\begin{align}\label{eq-bir}
X=:X_0 \dashrightarrow X_1 \dashrightarrow \cdots \dashrightarrow X_{N}:=X',
\end{align}
where each bimeromorphic map $\pi_i\colon  X_i  \dashrightarrow X_{i+1}$ is a divisorial contraction or flip.
Let $\bar X$ be a compact K\"ahler manifold with a bimeromorphic morphism $p_{i}\colon \bar X \to X_{i}$
that resolves the indeterminacy locus of $\pi_{i}$ (when $\pi_{i}$ is a flip).
Depending on whether $\pi_{i}$ is a divisorial contraction or flip,
we obtain the following diagrams:
$$
\xymatrix{
\bar X  \ar[d]_{p_{i}}    \ar@/^10pt/[drr]^{p_{i+1}}   & &  \\
X_{i}  \ar[rr]^{\pi_{i}} && X_{i+1},
}
\xymatrix{
\bar X \ar[dr]^{p_{i}} \ar@/^20pt/[drrr]^{p_{i+1}} \ar@/_30pt/[ddrr]_{r_{i}}& &\\
&X_{i} \ar@{.>}[rr]_{\pi_{i}} \ar[dr]_{q_{i}} && X_{i+1}\ar[dl]^{q_{i+}} \\
&&Z_{i}. &
}
$$
Note that $Z_i$ and $X_{i}$ are K\"ahler spaces by  \cite[Theorem 3.15]{HP15b}.
The varieties $X_i$ are biholomorphic to each other on a non-empty Zariski open set.
This open set, regarded as an open subset of all $X_i$'s,  is called a {\textit{biholomorphic locus}}.
Note that the complement of the biholomorphic locus in $X_{N}=X'$ is of codimension $\geq 2$.

\end{setup}
\begin{lemm} \label{lemma-cohomology-compare}
We consider Setting \ref{setup}.
Then, for any $0 \leq i \leq N$, there exists a K\"ahler form $\omega_{i}$  on $X_{i}$
such that  the Bott-Chern class
$$
\{ p_{0*} (p_{i+1}^* \omega_{i+1} -  p_{i}^* \omega_{i})\}+O(E, K_X)
$$
is represented by a positive current that is smooth on the biholomorphic locus of $X \dashrightarrow X'$,
where  $O(E, K_X)$ is a linear combination of the first Chern classes of $K_X$ and
the exceptional divisors $p_{0*}p_{i}^{*}E_{i}$.
Here $E_{i}$ denotes
the exceptional divisor of $\pi_{i}\colon X_{i} \to X_{i+1}$ $($when it is a divisorial contraction$)$.
In particular, the Bott-Chern cohomology class $
\{ p_{0*} p_{i}^* \omega_{i} -   \omega_{0}\}+O(E, K_X)
$ is represented by a positive current that is smooth on the biholomorphic locus of $X \dashrightarrow X'$.
\end{lemm}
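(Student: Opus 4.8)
The plan is to build, by induction on $i$, a single coherent family $\{\omega_i\}_{i=0}^{N}$ of K\"ahler forms (one on each $X_i$) whose comparison classes $\{p_{0*}(p_{i+1}^*\omega_{i+1}-p_i^*\omega_i)\}$ all lie in the finite-dimensional subspace $V\subset H^{1,1}_{BC}(X,\CC)$ spanned by $c_1(K_X)$ and the classes $c_1(p_{0*}p_i^*E_i)$, and are moreover represented by effective currents supported off the biholomorphic locus at the divisorial-contraction steps. Before starting I would enlarge $\bar X$ so that it dominates \emph{all} the $X_i$ at once (so that every $p_i$ and every $p_{0*}p_i^*E_i$ makes sense). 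The final assertion of the lemma then follows by telescoping, $p_{0*}p_i^*\omega_i-\omega_0=\sum_{j<i}p_{0*}(p_{j+1}^*\omega_{j+1}-p_j^*\omega_j)$: subtracting the $V$-valued contributions of the flip steps — this is the allowed correction $O(E,K_X)$ — leaves a finite sum of effective currents supported off the biholomorphic locus, which is positive and vanishes, hence is smooth, there.

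At a divisorial contraction $\pi_i\colon X_i\to X_{i+1}$ with exceptional divisor $E_i$ one has $p_{i+1}=\pi_i\circ p_i$. Since $\rho(X_i/X_{i+1})=1$ and $-E_i$ is $\pi_i$-ample, I would invoke the K\"ahler MMP of \cite{HP15a,HP15b} to get the unique $\mu_i>0$ making $[\omega_i]+\mu_i c_1(E_i)$ trivial along the contracted ray and descending to a K\"ahler class on $X_{i+1}$, and let $\omega_{i+1}$ be a K\"ahler form in it. Then $p_{i+1}^*[\omega_{i+1}]-p_i^*[\omega_i]=\mu_i c_1(p_i^*E_i)$, so $p_{0*}(p_{i+1}^*\omega_{i+1}-p_i^*\omega_i)=\mu_i\,p_{0*}[p_i^*E_i]$ is an effective current whose support lies over the contracted divisor $E_i$, hence off the biholomorphic locus; no correction is needed here.

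At a flip $\pi_i\colon X_i\dashrightarrow X_{i+1}$ of a small contraction $q_i\colon X_i\to Z_i$ (with $q_{i+}\colon X_{i+1}\to Z_i$ the flipped side, so that $-K_{X_i}$ is $q_i$-ample, $K_{X_{i+1}}$ is $q_{i+}$-ample, and $Z_i$ is K\"ahler by Setting \ref{setup}) I would fix a K\"ahler form $\omega_{Z_i}$ on $Z_i$, choose $\omega_i$ inside the class $q_i^*[\omega_{Z_i}]+\varepsilon_i c_1(-K_{X_i})$ with $\varepsilon_i>0$ small — which is K\"ahler by relative ampleness of $-K_{X_i}$ — and then take $\omega_{i+1}$ inside the K\"ahler class $q_{i+}^*[\omega_{Z_i}]+\varepsilon_i c_1(K_{X_{i+1}})$. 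Since the two pullbacks of $\omega_{Z_i}$ agree on $\bar X$, the $\omega_{Z_i}$-terms cancel and
\[
p_{i+1}^*[\omega_{i+1}]-p_i^*[\omega_i]=\varepsilon_i\,p_{i+1}^*c_1(K_{X_{i+1}})+\varepsilon_i\,p_i^*c_1(K_{X_i}).
\]
Using $p_i^*K_{X_i}=K_{\bar X}-\Gamma_i$, $p_{i+1}^*K_{X_{i+1}}=K_{\bar X}-\Gamma_{i+1}$ with $\Gamma_i,\Gamma_{i+1}\ge 0$ exceptional, and $K_{\bar X}=p_0^*K_X+\Gamma_0$ with $\Gamma_0\ge 0$ being $p_0$-exceptional (as $X$ is smooth), the pushforward $p_{0*}$ kills $\Gamma_0$, while every prime component of $p_{0*}\Gamma_i$ or $p_{0*}\Gamma_{i+1}$ is either zero or a positive multiple of some $c_1(p_{0*}p_k^*E_k)$ — because a divisor of $X$ that gets contracted by $X\dashrightarrow X_i$ or $X\dashrightarrow X_{i+1}$, maps that are isomorphisms in codimension $1$ away from the divisorial contractions, must be the strict transform of one of the $E_k$. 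Hence the comparison class lands in $V$.

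The hard part, as I see it, is the flip step together with the demand that all the $\omega_i$ fit into one family: at each intermediate $X_i$ the form $\omega_i$ must simultaneously be the corrected pushforward of $\omega_{i-1}$ under the contraction arriving at $X_i$ and be adapted to the small contraction $q_i$ leaving it. Checking that these are compatible — equivalently, that the corrected pushforward of a K\"ahler class through a divisorial or a small contraction is again K\"ahler on the possibly singular target — is exactly the non-formal input I would draw from the analytic MMP of \cite{HP15a,HP15b,HP16}; once it is in place, the canonical-bundle and exceptional-divisor terms swallow all the discrepancy contributions and the bookkeeping closes up.
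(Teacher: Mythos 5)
Your class-level bookkeeping (at both divisorial contractions and flips) is consistent with what the lemma asserts, but the proof has a genuine gap at exactly the point you yourself flag as ``the hard part'': the forward induction needs that the corrected class descends to a K\"ahler class on the next, possibly singular, model. Concretely, at a divisorial contraction you need that $[\omega_i]+\mu_i c_1(E_i)$, chosen trivial on the contracted ray, is $\pi_i^*$ of a K\"ahler class on $X_{i+1}$, and at a flip you need the analogous descent of a ray-trivial correction of $[\omega_i]$ to a K\"ahler class on $Z_i$ before pulling back to $X_{i+1}$. This is a contraction-theorem/descent statement for transcendental $(1,1)$-classes on singular K\"ahler threefolds; it is not proved in your proposal, and it is not an off-the-shelf citation from \cite{HP15a,HP15b,HP16} in the form you need (what Setting \ref{setup} quotes from \cite{HP15b} is only that the targets $X_{i+1}$ and $Z_i$ are K\"ahler spaces, not that every ray-trivial K\"ahler class descends to a K\"ahler class). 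Without it, your flip recipe is internally inconsistent: when you arrive at $X_i$ the form $\omega_i$ has already been fixed by the preceding step and will in general not lie in a class of the shape $q_i^*[\omega_{Z_i}]+\varepsilon_i c_1(-K_{X_i})$, so you are not free to choose both $\omega_i$ and $\omega_{i+1}$ at that step.

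The paper avoids this entirely by running the induction in the opposite direction: fix $\omega_N$ on $X_N=X'$ and construct $\omega_i$ from $\omega_{i+1}$. At a divisorial contraction it sets $\omega_i:=\pi_i^*\omega_{i+1}-\varepsilon\theta_i$ with $\theta_i\in c_1(E_i)$ smooth, K\"ahler because $-E_i$ is $\pi_i$-ample; at a flip it sets $\omega_i:=q_i^*\omega_{Z_i}-\varepsilon\eta_i$ with $\eta_i\in c_1(K_{X_i})$, K\"ahler because $-K_{X_i}$ is $q_i$-ample, after rescaling $\omega_{Z_i}$ so that $\omega_{i+1}\ge q_{i+}^*\omega_{Z_i}$; then $p_{0*}(p_{i+1}^*\omega_{i+1}-p_i^*\omega_i)$ is the positive current $p_{0*}p_{i+1}^*(\omega_{i+1}-q_{i+}^*\omega_{Z_i})$, smooth on the biholomorphic locus, plus $\varepsilon p_{0*}p_i^*\eta_i$, which is of type $O(E,K_X)$ exactly as in your discrepancy computation. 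In this direction every choice is a pullback perturbed by a small relatively ample class, so no descent statement is needed; if you simply reverse the direction of your induction, your own argument closes up and becomes essentially the paper's proof. (Your telescoping deduction of the final assertion and your identification of $p_{0*}p_i^*c_1(K_{X_i})$ with an $O(E,K_X)$ class are fine.)
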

\begin{rem}\label{rem-dimension}
The proof of Lemma \ref{lemma-cohomology-compare} works for K\"ahler spaces of any dimension,
provided that divisorial contractions or flips $\pi_i\colon X_i \dashrightarrow X_{i+1}$ exist.
In the projective case, Mori's cone theorem and Kawamata's base-point-free theorem
contract the extremal ray whose intersection number with the canonical divisor is strictly negative.
This contraction is given by the Stein factorization of the Iitaka fibration of a suitably chosen line bundle.
As a result, the outcomes of MMP are always projective varieties.
It is conjectured that a K\"ahler variant of Mori's cone theorem would similarly contract K\"ahler spaces to K\"ahler spaces, which is verified up to dimension 3 (see \cite{HP16}).
\end{rem}

\begin{proof}
Fix a K\"ahler form $\omega_{N}$ on $X_{N}$.
Assuming $\omega_{i+1}$ has been constructed,
we proceed to inductively construct $\omega_{i}$.

We initially consider the case where $\pi_{i}\colon X_{i} \to X_{i+1}$ is a divisorial contraction
with the exceptional divisor $E_i$.
Since $-E_i$ is $\pi_{i}$-ample,
we can take a smooth form $\theta_{i} \in c_{1}(E_{i})$ such that
$\omega_{i}:=\pi_{i}^* \omega_{i+1}-\e \theta_{i}$ is a K\"ahler form on $X_{i}$
for  $1\gg \e> 0$. Then, we see that
$$
 p_{0*} (p_{i+1}^* \omega_{i+1} -  p_{i}^* \omega_{i})
= p_{0*} p_{i}^* (\pi_i^* \omega_{i+1}-   \omega_{i})=\e p_{0*} p_{i}^{*} \theta_{i}.
$$
The current $p_{0*} p_{i}^{*} \theta_{i}$ represents $c_{1}(p_{0*} p_{i}^{*} E_{i})=O(E)$,
which  finishes the proof.

We now consider the case where $\pi_{i} \colon X_{i} \dashrightarrow X_{i+1}$ is a flip.
Fix a K\"ahler form $\omega_{Z_i}$ (up to a rescaling) on $Z_{i}$
such that $\omega_{i+1} \geq q_{i+}^* \omega_{Z_i}$.
Furthermore, since $-K_{X_i}$ is $q_{i}$-ample,
we can take a smooth form $\eta_{i} \in c_{1}(K_{X_i})$ such that
$\omega_{i}:=q_{i}^* \omega_{Z_{i}}-\e \eta_{i}$ is a K\"ahler form on $X_{i}$ for $1\gg \e> 0$.
Then, we can easily see that
\begin{align*}
p_{0*} (p_{i+1}^* \omega_{i+1} -  p_{i}^* \omega_{i})
&= p_{0*} (p_{i+1}^* \omega_{i+1} -  p_{i}^* (q_{i}^* \omega_{Z_i} -\e \eta_{i}))\\
&= p_{0*} (p_{i+1}^* (\omega_{i+1} -   q_{i+}^* \omega_{Z_i}) +\e p_{i}^*\eta_{i}).
\end{align*}
The current $ \varepsilon p_{0*}  p_{i}^*\eta_{i}$ represents $\varepsilon c_{1} (p_{0*}  p_{i}^*K_{X_i})=O(K_X, E) $
and $p_{0*} (p_{i+1}^* (\omega_{i+1} -   q_{i+}^* \omega_{Z_i}))$ is smooth on the biholomorphic locus,
which finishes the proof.
\end{proof}

Later, we will show that $X \dashrightarrow X'$ described in Theorem \ref{3-fold-MMP} is actually an isomorphism when $-K_X$ is nef.
For this purpose,  we need the following corollary on the intersection number.
Note that when $X_i$ is smooth, some in the following proposition can be simplified
by saying that $-K_{X_i}$ is modified nef.
However, we do not use this terminology in this paper
due to the ambiguity of the definition of modified nefness on singular spaces.

\begin{prop}\label{prop-intersection}
We consider Setting \ref{setup}.
Let  $\omega_{i}$ be a K\"ahler form on $X_{i}$.
Then, we have$:$

$(1)$ There exists a positive current $T_{\e} \in c_{1}(-K_{X_{i}}) + \e \{\omega_{i}\}$
such that $T_{\e}$ is smooth on the biholomorphic locus of $X \dashrightarrow X'$.

$(2)$  For a surface $V \subset X_{i}$,  we have
$$(c_{1}(K_{X_{i}})^2 \cdot \{\omega_{i}\}) \geq 0 \text{ and }
(c_{1}(-K_{X_{i}})  \cdot c_{1}(V) \cdot \{\omega_{i}\}) \geq 0.
$$
\end{prop}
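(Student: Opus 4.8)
The plan is to prove part $(1)$ by transporting the nefness of $-K_X$ down the MMP using Lemma \ref{lemma-cohomology-compare}, and then to deduce the intersection inequalities in $(2)$ by representing all classes involved by currents that are smooth on the biholomorphic locus and applying the support lemma (Lemma \ref{support-lemma}) to discard the contributions supported on the complement, which has codimension $\geq 2$.

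For part $(1)$, since $-K_X$ is nef and $X$ is smooth, for each $\e>0$ there is a smooth form $S_\e \in c_1(-K_X)$ with $S_\e \geq -\e \omega_0$ for a fixed K\"ahler form $\omega_0$ on $X_0 = X$. I would first observe that because $X_i$ and $X$ agree on the biholomorphic locus (an open set whose complement in $X_i$ has codimension $\geq 2$), the class $c_1(-K_{X_i})$ and the various $c_1$ of exceptional divisors are controlled by $c_1(-K_X)$ up to the bookkeeping term $O(E, K_X)$ appearing in Lemma \ref{lemma-cohomology-compare}. Concretely: push $S_\e + \e\omega_0 \geq 0$ forward to $\bar X$ and then to $X_i$ via $p_{0*}p_i^*$; the resulting current represents $c_1(-K_{X_i}) + \e\{p_{0*}p_i^*\omega_0\} +$ (a combination of exceptional-divisor classes). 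By Lemma \ref{lemma-cohomology-compare}, $\{p_{0*}p_i^*\omega_0\}$ differs from $\{\omega_i\}$ (rescaled) by a class represented by a positive current smooth on the biholomorphic locus, modulo $O(E,K_X)$; absorbing everything, one gets a current in $c_1(-K_{X_i}) + \e\{\omega_i\}$ (after a harmless rescaling of $\omega_i$) that is positive and smooth on the biholomorphic locus. The one delicate point is handling the exceptional terms $O(E,K_X)$: these have negative contributions a priori, but on the biholomorphic locus they vanish, and one uses that $-K_{X_i}$ is $q_i$-ample / $\pi_i$-relatively ample at each step — exactly the mechanism already used in the proof of Lemma \ref{lemma-cohomology-compare} — to trade them against small multiples of $\omega_i$. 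This is the main obstacle: keeping the signs straight through the induction and verifying that the exceptional divisors can be absorbed using relative ampleness of $-K_{X_i}$ at the contracted/flipped step.

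For part $(2)$, let $T_\e \in c_1(-K_{X_i}) + \e\{\omega_i\}$ be the positive current from $(1)$, smooth on the biholomorphic locus $U_i \subset X_i$. For the first inequality, write $c_1(K_{X_i})^2 = (T_\e - \e\omega_i)\wedge(T_\e - \e\omega_i)$ as cohomology classes and wedge with $\{\omega_i\}$. Expanding, $(c_1(K_{X_i})^2 \cdot \{\omega_i\}) = (T_\e^2 \cdot \omega_i) - 2\e(T_\e \cdot \omega_i^2) + \e^2(\omega_i^3)$. The term $(T_\e^2 \cdot \omega_i)$ is the mass of a positive current and is $\geq 0$ — here I would be slightly careful that $T_\e$ may not be smooth on all of $X_i$, but since its singular locus lies in $X_i \setminus U_i$ of codimension $\geq 2$, the wedge product $T_\e \wedge T_\e$ is still well-defined as a positive current (no mass concentrated in too-small sets, by the support-type considerations of Lemma \ref{support-lemma}), so its pairing against the K\"ahler class $\{\omega_i\}$ is nonnegative. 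Letting $\e \to 0$ gives $(c_1(K_{X_i})^2 \cdot \{\omega_i\}) \geq 0$. For the second inequality, with $V \subset X_i$ a surface and $[V]$ its integration current, I would compute $(c_1(-K_{X_i}) \cdot c_1(V) \cdot \{\omega_i\}) = \int_{X_i} T_\e \wedge [V] \wedge \omega_i - \e\int_{X_i}\omega_i \wedge [V] \wedge \omega_i + (\text{cohomology identity cleanup})$; more directly, restrict $T_\e$ to $V$ (possible since $V$ is not contained in the singular locus of $T_\e$ — or pass to a resolution of $V$) to get a positive current on a surface, pair against the restriction of $\omega_i$, conclude nonnegativity, and let $\e\to 0$. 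The technical nuisance throughout $(2)$ is justifying the restriction/intersection of the possibly-singular positive current $T_\e$ with $V$ and with itself; this is handled exactly because $T_\e$ is smooth away from a codimension-$\geq 2$ set and has locally bounded potentials there (it comes from pushing forward a smooth form), so classical intersection theory for closed positive currents with bounded potentials applies.
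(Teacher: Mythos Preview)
Your outline for $(1)$ is essentially the paper's argument, with two small corrections. First, the transport goes via $p_{i*}p_0^{*}$ (pull back from $X$ to $\bar X$, then push down to $X_i$), not $p_{0*}p_i^{*}$. Second, the $O(E,K_X)$ bookkeeping is simpler than you indicate: on the biholomorphic locus the $O(E)$ contributions vanish (the exceptional divisors lie in its complement), so after invoking Proposition~\ref{BEG} the pushed-forward current represents $c_1(-K_{X_i}) + \e\{\omega_i\} + \e\cdot O(K_{X_i})$ on all of $X_i$; the remaining $\e\cdot O(K_{X_i})$ is absorbed \emph{algebraically}, since $c_1(-K_{X_i}) + \e c\, c_1(K_{X_i}) = (1-\e c)\,c_1(-K_{X_i})$. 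No further appeal to relative ampleness is needed.

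Part $(2)$ has a genuine gap. You want to form $T_\e \wedge T_\e$ (and $T_\e \wedge [V]$) on $X_i$, justifying this by the claim that $T_\e$ ``has locally bounded potentials'' because ``it comes from pushing forward a smooth form''. But the pushforward $p_{i*}p_0^{*}(S_\e + \e P)$ under a bimeromorphic contraction can acquire Lelong numbers along the image of the $p_i$-exceptional locus, so the potentials are \emph{not} bounded on $X_i\setminus U_i$; Lemma~\ref{support-lemma} says nothing about defining such products. Combined with the fact that $X_i$ is singular (terminal, hence smooth only in codimension $2$), the Bedford--Taylor product $T_\e^2$ is not available on $X_i$. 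The authors flag exactly this obstacle in the Remark after Corollary~\ref{push-forward-sing}: they could not define $T_\e^2$ on the singular space and explicitly avoid arguing this way.

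The paper's route for $(2)$ bypasses current products on $X_i$ altogether by pulling back to the smooth model $\bar X$. Writing $Q$ for either $c_1(V)\cdot\{\omega_i\}$ or $c_1(-K_{X_i})\cdot\{\omega_i\}$, one notes that $p_i^{*}Q$ is represented by a closed positive $(2,2)$-current on $\bar X$ (since $p_i^{*}c_1(V)$ and $p_i^{*}c_1(-K_{X_i})$ are pseudo-effective and $p_i^{*}\omega_i$ is smooth semi-positive). The projection formula together with the class identification from $(1)$ gives
\[
(c_1(-K_{X_i})\cdot Q)=\lim_{\e\to 0}\bigl(\{p_0^{*}S_\e\}\cdot p_i^{*}Q\bigr),
\]
and each term on the right is a pairing of a \emph{smooth} semi-positive $(1,1)$-form with a positive $(2,2)$-current on the smooth manifold $\bar X$, hence nonnegative. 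No Monge--Amp\`ere product on a singular space is needed.
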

\begin{proof}
By the nefness of $-K_{X}$,
there exists a smooth (semi-)positive form $S_{\e} \in c_{1}(-K_{X})+\e\{\omega_{0}\} $.
By Lemma \ref{lemma-cohomology-compare},
we can find a positive current $P \in \{ p_{0*} p_{i}^* \omega_{i} -   \omega_{0}\}+O(E, K_X)$
such that $P$ is smooth on the biholomorphic locus of $X \dashrightarrow X'$.
The current $p_{i*}p_{0}^{*} (S_{\e} + \e P)$ (defined on $X_i$) is positive
and represents
$$
p_{i*}p_{0}^{*}(c_{1}(-K_{X})
+\e p_{0*} p_{i}^* \{\omega_{i}\}+\e O(E, K_X)).
$$
The above class coincides with $c_{1}(-K_{X_{i}}) + \e  \{\omega_{i}\} + \e O(K_{X_{i}})$
on the biholomorphic locus, which is a Bott-Chern cohomology class on $X_{i}$.
Hence,  by Proposition, \ref{BEG} the positive current $p_{i*}p_{0}^{*} (S_{\e} + \e P)$ actually represents
$c_{1}(-K_{X_{i}})+\e \{\omega_{i}\} + \e O(K_{X_{i}})$.
Then, Conclusion (1) easily follows since the $O(K_{X_{i}})$-part can be absorbed into the K\"ahler class
and $p_{i*}p_{0}^{*} (S_{\e} + \e P)$ is smooth on the biholomorphic locus.

To prove Conclusion (2), we first remark that $c_{1}(V)$ is well-defined since $X_{i}$ is $\mathbb{Q}$-factorial.
Let $Q$ be a $(2,2)$-class defined by  either $c_{1}(V) \cdot \{\omega_{i}\}$
or $c_{1}(-K_{X_{i}}) \cdot \{\omega_{i}\}$.
In any case, noting that $p_{i}^{*}(-K_{X_i})$ and $p_{i}^{*} c_{1}(V)$ are pseudo-effective,
the pull-back $p_{i}^{*}Q$ can be represented by a positive $(2,2)$-current.
Hence, since $S_{\e}$ is a smooth (semi-)positive $(1,1)$-form,
the intersection number $(\{p_{0}^{*} S_{\e} \} \cdot p_{i}^{*}Q)$ is non-negative.
Then, Conclusion (2) follows from
\begin{align*}
(c_{1}(-K_{X_i}) \cdot Q)&=\lim_{\e  \to 0}( (c_{1}(-K_{X_i}) +\e \{\omega_{i}\} + \e O(K_{X_{i}}))   \cdot Q)\\
&=\lim_{\e  \to 0} (\{p_{i*}p_{0}^{*} (S_{\e} + \e P)\} \cdot Q)\\
&=\lim_{\e  \to 0} (\{p_{0}^{*} (S_{\e} + \e P)\} \cdot p_{i}^{*}Q)\\
&=\lim_{\e  \to 0} (\{p_{0}^{*} S_{\e} \} \cdot p_{i}^{*}Q) \geq 0.
\end{align*}
\end{proof}

\begin{cor} \label{push-forward-sing}
We consider Setting \ref{setup} and  the MFS $\varphi\colon  X_{N}=X' \to S$  in Theorem \ref{3-fold-MMP}.
Then, we have$:$

$(1)$ $-4c_1(K_S)-c_1(\Delta)$ is pseudo-effective,
where $\Delta$ is the discriminant divisor of the MFS $\varphi\colon  X \to S$ $($which is a $\mathbb{Q}$-conic bundle$)$.

$(2)$ $\Delta=0$, $c_{1}(K_{S})=0$, and $\kappa(S)=0$ hold;
in particular, $\varphi\colon  X' \to S$ is toroidal over $S$ and $S$ has only rational double points.
Furthermore, when $S$ is smooth,
the variety $X'$ is automatically smooth and $\varphi\colon X' \to S $ is a $($locally trivial$)$ $\PP^1$-bundle.
\end{cor}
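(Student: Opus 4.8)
The plan is to deduce both statements from the push-forward formula of Proposition \ref{push-forward-formula}, the non-negativity estimates of Proposition \ref{prop-intersection}, and the fact that $K_{S}$ is pseudo-effective (Theorem \ref{3-fold-MMP} (d)).

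For (1), I would first pass to the open locus where $\varphi$ is an honest conic bundle. Put $S_{0}:=S\setminus F$, where $F$ is the finite set $\operatorname{Sing} S\cup\varphi(\operatorname{Sing} X')$; since $S$ is a normal surface and $X'$ has isolated terminal singularities, $\codim(S\setminus S_{0})\ge 2$. Over $S_{0}$, both $X'_{0}:=\varphi^{-1}(S_{0})$ and $S_{0}$ are smooth, $\varphi_{0}:=\varphi|_{X'_{0}}$ is equi-dimensional of relative dimension $1$ (Theorem \ref{3-fold-MMP} (f)) with $\varphi_{0}$-ample $-K_{X'_{0}}$ (Theorem \ref{3-fold-MMP} (e)), so $\varphi_{0}$ is a conic bundle in the sense of Definition \ref{conic-bundle} with discriminant $\Delta\cap S_{0}$. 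Proposition \ref{push-forward-formula} then gives $\varphi_{*}(c_{1}(K_{X'})^{2})=-4c_{1}(K_{S})-c_{1}(\Delta)$ over $S_{0}$, and this identity of Bott-Chern classes propagates to $S$ because $S\setminus S_{0}$ has codimension $\ge 2$. It remains to show that $-4c_{1}(K_{S})-c_{1}(\Delta)$ is pseudo-effective; by the duality between the pseudo-effective and nef cones on a compact K\"ahler surface (applied on a resolution of $S$) it suffices to check that it pairs non-negatively with every K\"ahler class $\{\omega_{S}\}$ on $S$. By the projection formula this pairing equals $(c_{1}(K_{X'})^{2}\cdot\varphi^{*}\{\omega_{S}\})$; since $\varphi^{*}\omega_{S}+\varepsilon\,\omega_{X'}$ is a K\"ahler form on $X'$ for every $\varepsilon>0$ (here $\omega_{X'}$ is a fixed K\"ahler form on $X'$), Proposition \ref{prop-intersection} (2) yields $(c_{1}(K_{X'})^{2}\cdot(\varphi^{*}\{\omega_{S}\}+\varepsilon\{\omega_{X'}\}))\ge 0$, and letting $\varepsilon\to 0$ finishes (1).

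For (2), I would combine (1) with Theorem \ref{3-fold-MMP} (d): both $c_{1}(K_{S})$ and $-4c_{1}(K_{S})-c_{1}(\Delta)$ are pseudo-effective, hence so is $-c_{1}(\Delta)$. As $\Delta$ is an effective divisor, $c_{1}(\Delta)$ is pseudo-effective too, so $\{\omega_{S}\}\cdot c_{1}(\Delta)=0$ for every K\"ahler class, which forces $\Delta=0$ (a nonzero effective divisor meets a K\"ahler class positively). Then $-4c_{1}(K_{S})$ is pseudo-effective, so $c_{1}(K_{S})$ and $-c_{1}(K_{S})$ are both pseudo-effective and therefore $c_{1}(K_{S})$ is numerically trivial; since $S$ is klt (Theorem \ref{3-fold-MMP} (c)), abundance for surfaces (equivalently, the classification of surfaces applied to a crepant resolution of the rational double points appearing below) makes $K_{S}$ torsion, giving $c_{1}(K_{S})=0$ in $H^{1,1}_{BC}(S,\CC)$ and $\kappa(S)=0$. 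With $\Delta=0$, Lemma \ref{MP} shows $\varphi$ is toroidal at every point of $S$, so by Definition \ref{def-Qconic} (3) the base $S$ is locally $\CC^{2}/\mu_{m}$, i.e.\ has only cyclic quotient singularities of type $A_{m-1}$, which are rational double points. If $S$ is smooth then each such $m$ equals $1$, so $X'$ is locally $\PP^{1}\times\CC^{2}$; hence $X'$ is smooth and $\varphi\colon X'\to S$ is a locally trivial $\PP^{1}$-bundle.

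The step I expect to be delicate is the passage, in (1), from the smooth conic bundle $\varphi_{0}$ over $S_{0}$ to the $\mathbb{Q}$-conic bundle $\varphi$ over the possibly singular $S$: one must verify that $\varphi_{0}$ is genuinely a conic bundle, that $\varphi_{*}(c_{1}(K_{X'})^{2})$ and its restriction to $S_{0}$ are compatible, and that the resulting identity of Bott-Chern classes survives the codimension-$\ge 2$ gap, all of which rest on $\codim(S\setminus S_{0})\ge 2$ together with the Bott-Chern formalism of Subsection \ref{subsec2-2}. The remaining manipulations with pseudo-effective classes on a K\"ahler surface are routine.
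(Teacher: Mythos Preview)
Your approach to (1) differs substantially from the paper's. The paper does not argue by duality; it produces an explicit closed positive $(1,1)$-current on $S$ in the class $-4c_1(K_S)-c_1(\Delta)$. Concretely, using the $\varphi$-ampleness of $-K_{X'}$ one chooses a K\"ahler form $\omega'\in c_1(-K_{X'})+\{\varphi^*\omega_S\}$, then invokes Proposition \ref{prop-intersection} (1) (not (2)) to obtain positive currents $T_\varepsilon\in-(1+\varepsilon)c_1(K_{X'})+\varepsilon\{\varphi^*\omega_S\}$ that are smooth on $\varphi^{-1}(S_0)$. One then forms the Bedford--Taylor product $T_\varepsilon^2$ on $X'_{\reg}$, pushes it forward to a positive current on $S_0$, identifies its Bott--Chern class there via Proposition \ref{push-forward-formula}, extends it across the finite set $S\setminus S_0$ by Proposition \ref{BEG}, and finally passes to a weak limit as $\varepsilon\to 0$.

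Your intersection-number route is appealing but two steps are not justified as written. First, the duality claim on the singular surface $S$: pairing non-negatively with every K\"ahler class on $S$ is not obviously equivalent to pseudo-effectivity. Passing to a resolution $\pi\colon\tilde S\to S$ you would need $\pi^*(-4K_S-\Delta)\cdot\tilde\omega\ge 0$ for every K\"ahler class $\tilde\omega$ on $\tilde S$, and such classes are not all pullbacks from $S$; you can exploit that $\pi^*(-4K_S-\Delta)$ is orthogonal to the exceptional curves, but you still have to argue that the $\pi^*H^{1,1}(S)$-component of an arbitrary $\tilde\omega$ is a limit of K\"ahler classes on $S$. Second, the ``propagation'' of the identity $\varphi_*(c_1(K_{X'})^2)=-4c_1(K_S)-c_1(\Delta)$ from $S_0$ to $S$ is not a formality of codimension; the paper handles exactly this via the extension theorem for positive currents (Proposition \ref{BEG}), and the Remark following the corollary stresses that even defining $T_\varepsilon^2$ globally on the singular $X'$ is delicate, which is why one pushes forward to the surface before extending. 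Your deduction of (2) from (1) matches the paper's, and your direct argument in the smooth case (forcing $m=1$ in the toroidal local model) is a clean alternative to the paper's citation of \cite{ARM14}.
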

\begin{proof}
In the proof, we take Zariski open subsets $S_0 \subset S$ with $\codim(S \setminus S_0) \geq 2$.
To maintain clarity in notation, we consistently refer to these subsets as $S_0$, even though they may vary. 
Since $-K_{X'}$ is $\varphi$-ample by Theorem \ref{3-fold-MMP} (d),
we can take a K\"ahler form $\omega' \in c_{1}(-K_{X'}) + \{\varphi^{*}\omega_{S}\}$,
where $\omega_{S}$ is a fixed K\"ahler form on $S$.
By Proposition \ref{prop-intersection} $(1)$,
there exists a positive current
$$
T_{\e} \in -c_{1}(K_{X'}) + \e \{\omega'  \} =  -(1+\e) c_{1}(K_{X'}) + \e \{ \varphi^{*}\omega_{S} \}
$$
such that $T_{\e}$ is smooth on $\varphi^{-1}(S_{0})$.
Since $X'$ is smooth in codimension $2$, there exists $S_{0}$ such that
$\varphi|_{X_{0}}\colon X_{0}:=\varphi^{-1}(S_{0}) \to S_{0}$  is a conic bundle.
Note that  the Bedford-Taylor product $T_{\e}^{2}$ is defined on $X'_{\reg}$.
By Proposition \ref{push-forward-formula},
the pushforward $\varphi_{*}(T_{\e}^{2})$ defined on $S_{0}$ is a positive current
representing the following class on $S_{0}$:
\begin{align} \label{eq-com}
&\varphi_{*}\big( (-(1+\e) c_{1}(K_{X'}) + \e \{ \varphi^{*}\omega_{S}\})^{2}\big) \notag
\\ =&-(1+\e)^{2}(4c_1(K_S) + c_1(\Delta) )
- 2\e(1+\e) \varphi_{*}  c_{1}(K_{X'}) \cdot \{\omega_{S}\}  + \e^{2}\varphi_{*} \{\varphi^{*} \omega_{S}^{2}\}
\\
=&-(1+\e)^{2}(4c_1(K_S) + c_1(\Delta) )
+ 4\e(1+\e)  \cdot \{\omega_{S}  \}.\notag
\end{align}
Here, we  used that $\varphi_{*}  c_{1}(K_{X'})=-2$ and
$\varphi_{*} \{\varphi^{*} \omega_{S}^{2}\}=0$ hold on $S_{0}. $
Proposition \ref{BEG} shows that $\varphi_{*}(T_{\e}^{2})$
is actually a positive current on $S$
representing the Bott-Chern cohomology class of the right-hand side.
Since the mass measure of $\varphi_*(T_\e^2)$ is uniformly bounded,
we may assume that $\varphi_*(T_\e^2)$ has the weak limit by  weak compactness
(see \cite[(1.14), (1.23) Propositions, Chapter III]{agbook}).
Indeed, the total mass of $\varphi_*(T_\e^2)$ with respect to $\omega_S$
is the intersection number of \eqref{eq-com} with $\{\omega_S\}$,
which  is uniformly bounded in $\e \in [0,1)$. 
Note that $S$ has singularities, but we can apply the weak compactness after taking a resolution of singularities of $S$.
Then, the weak limit of  $\varphi_*(T_\e^2)$ is a positive current  representing the class
$$
\lim_{\e \to 0}-(1+\e)^{2}(4c_1(K_S) + c_1(\Delta) )
+ 4\e(1+\e)  \cdot \{\omega_{S}  \}  = -4c_1(K_S)-c_1(\Delta).
$$
This indicates that  $-4c_1(K_S)-c_1(\Delta)$ is pseudo-effective.

Theorem \ref{3-fold-MMP} (d) shows that $\Delta=0$ and $c_{1}(K_{S})=0$ hold.
The $\mathbb{Q}$-conic bundle $\varphi\colon  X' \to S$ is toroidal by Lemma \ref{MP}
and the singularities of $S$ are rational double points.
Therefore, for the minimal resolution $h \colon \bar {S} \to S$,
we have $K_{\bar {S}}=h^* K_S$, which implies that $\kappa(S)=\kappa(\bar S)=0$.
The latter statement of Conclusion (2) is a special case of \cite[Theorem 5]{ARM14}.
\end{proof}

\begin{rem}
In the proof, if $X'$ is smooth,
the Bedford-Taylor product $T_{\e}^{2}$ can be defined on $X'$
as a positive current representing  $(c_{1}(-K_{i})+\e \{\omega_{i}\})^{2}$.
This is expected to be true even when $X'$ has singularities,
which gives a more direct proof of Proposition \ref{prop-intersection},
but we could not prove this expectation.
We avoided this difficulty
by considering the  $(1,1)$-current $\varphi_{*}(T_{\e}^{2})$  (instead of $T_{\e}^{2}$).
\end{rem}

\section{Proof of the Main Results} \label{Sec4}
This section is devoted to the proof of Theorem \ref{theo-non-proj}.
Throughout this section, let $X$ be a non-projective compact K\"ahler three-fold
with nef anti-canonical bundle.
As explained in Subsection \ref{subsec1-2}, we may assume that $\dim R(X) =2$ for Theorem \ref{theo-non-proj}.
Furthermore, by replacing $X$ with a finite \'etale cover,
we may assume that $\pi_{1}(X) \cong \mathbb{Z}^{\oplus 2q}$,
where $q$ is the irregularity  of $X$.
Here we used the fact that $\pi_{1}(X)$ is almost abelian
(see \cite[Th\'eor\`eme 2]{Pau97} for the proof based on Monge-Amp\`ere equations and
see \cite[Theorem 1.4]{Pau17}, \cite[Theorem 2.2]{Cam95}
for the proof based on variations of K\"ahler-Einstein metrics). 
We consider the case of $q \not= 0$ in Subsection \ref{subsec4-2} and
the case of $q = 0$ in Subsection \ref{subsec4-3}.

\subsection{On the base of MRC fibrations}
Before starting the proof of Theorem \ref{theo-non-proj},
we determine the smooth minimal base of MRC fibrations of $X$.

\begin{prop} \label{prop-mrc}
Let $X$ be a non-projective compact K\"ahler three-fold
with nef anti-canonical bundle such that $\dim R(X)=2$,
where $R(X)$ denotes the smooth minimal base of MRC fibrations of $X$.
Then,
up to a finite \'etale cover of $X$,
the base $R(X)$ is either a torus or a  K3 surface.
In particular, the augmented irregularity of $X$ is  $2$ or $0$.
\end{prop}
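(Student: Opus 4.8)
The plan is to reduce the statement to the Enriques--Kodaira classification of the smooth minimal surface $R(X)$, feeding in two inputs prepared earlier: the Minimal Model Program of Section~\ref{Sec3}, which forces $c_1(K_{R(X)})=0$, and the fundamental group of $X$, which (as recalled at the beginning of Section~\ref{Sec4}) we may assume equals $\ZZ^{\oplus 2q}$ and which singles out the torus and the K3 cases.

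First I would run the MMP. Since $X$ is a smooth compact K\"ahler threefold with $\dim R(X)=2$, Theorem~\ref{3-fold-MMP} produces a bimeromorphic map $X\dashrightarrow X'$ and an MFS $\varphi\colon X'\to S$; as $-K_X$ is nef, Corollary~\ref{push-forward-sing}~$(2)$ applies and gives $c_1(K_S)=0$, $\kappa(S)=0$, and that $S$ has only rational double points. For the minimal resolution $h\colon\bar S\to S$ we then have $K_{\bar S}=h^*K_S$ (the rational double points being canonical), hence $c_1(K_{\bar S})=0$ and $\kappa(\bar S)=0$; in particular $\bar S$ carries no $(-1)$-curve and is already a minimal surface. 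The conic bundle $\varphi\colon X'\to S$ has rationally connected fibers over the non-uniruled surface $S$ (Theorem~\ref{3-fold-MMP}~(d)), so it is an MRC fibration of $X'$, hence of $X$ since $X\dashrightarrow X'$ is bimeromorphic; therefore $\bar S$ is bimeromorphic to $R(X)$. As two bimeromorphic non-uniruled minimal surfaces are isomorphic, $R(X)\cong\bar S$, so $R(X)$ is a minimal K\"ahler surface with numerically trivial canonical class, which by the Enriques--Kodaira classification is a complex torus, a K3 surface, an Enriques surface, or a bielliptic surface.

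Next I would use $\pi_1$. Resolving the MRC fibration gives a composition of blow-ups $\hat X\to X$, so that $\pi_1(\hat X)\cong\pi_1(X)$, together with a morphism $\hat X\to R(X)$ whose general fiber is $\PP^1$, hence rationally connected and simply connected; since a fibration with rationally connected general fibers induces an isomorphism on fundamental groups, $\pi_1(R(X))\cong\pi_1(X)\cong\ZZ^{\oplus 2q}$. Among the four surfaces above the fundamental groups are $\ZZ^{\oplus 4}$, trivial, $\ZZ/2\ZZ$, and a non-abelian group, respectively; only the torus and the K3 surface have a torsion-free abelian fundamental group, and in those cases $q=2$ and $q=0$ respectively. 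For the final assertion, any finite \'etale cover $\tilde X\to X$ has $\pi_1(\tilde X)$ a finite-index subgroup of $\ZZ^{\oplus 2q}$, hence again free abelian of rank $2q$, so $q(\tilde X)=b_1(\tilde X)/2=q$; thus the augmented irregularity of $X$ equals $q\in\{0,2\}$.

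Most of this is bookkeeping on top of Corollary~\ref{push-forward-sing}; the two points that need care are the identification $R(X)\cong\bar S$ and the fundamental-group step. The former links the a priori singular MMP base $S$ with the prescribed smooth minimal MRC base $R(X)$: one must check that $\varphi\colon X'\to S$ really is an MRC fibration and that $\bar S$ is minimal, so that uniqueness of minimal models of non-uniruled surfaces applies. The latter needs the $\pi_1$-isomorphism for the MRC fibration even though it is only meromorphic, which is handled by first resolving it to an honest morphism between smooth manifolds.
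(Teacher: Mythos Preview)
Your proof is correct and follows the same overall route as the paper: run the MMP of Theorem~\ref{3-fold-MMP}, use Corollary~\ref{push-forward-sing} to obtain $c_1(K_S)=0$ and rational double points on $S$, identify $R(X)$ with the minimal resolution $\bar S$ (your justification via uniqueness of non-uniruled minimal surfaces is in fact more explicit than the paper's), and then invoke the surface classification.

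The genuine difference is in how the Enriques and bielliptic cases are eliminated and how $q$ is pinned down. The paper observes that Enriques and bielliptic surfaces are always projective and argues that $R(X)$ must be non-projective (implicitly via $h^{2,0}$), leaving only the torus and K3 cases; it then treats $q>0$ by pulling back $1$-forms along the surjective Albanese map $S\to A(X)$ to force $R(X)$ to be a torus and hence $q=2$, and treats $q=0$ via the isomorphism $\pi_1(X)\cong\pi_1(R(X))$. You instead use $\pi_1(R(X))\cong\pi_1(X)\cong\ZZ^{\oplus 2q}$ uniformly: this excludes the Enriques ($\pi_1=\ZZ/2\ZZ$) and bielliptic (non-abelian $\pi_1$) cases and simultaneously reads off $q\in\{0,2\}$. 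Your approach is more streamlined and avoids the somewhat implicit step of deducing non-projectivity of $R(X)$; the paper's approach, on the other hand, uses the non-projectivity hypothesis directly and anticipates the Albanese-map argument that reappears in Subsection~\ref{subsec4-2}.
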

\begin{proof}
By replacing $X$ with a finite \'etale cover, we assume that
$\pi_{1}(X) \cong \mathbb{Z}^{\oplus 2q}$.
In particular, we have $q=\dim A(X)$,
where $q$  is the irregularity of $X$.

Consider the same situation as in Theorem \ref{3-fold-MMP}.
Since $X \dashrightarrow X' \to S$ in Theorem \ref{3-fold-MMP} is an  MRC fibration of $X$,
we obtain the minimal resolution $\gamma \colon R(X) \to S$
by noting that $R(X)$ is the smooth minimal base of MRC fibrations and
the base of MRC fibrations is uniquely determined up to bimeromorphic models.
By Corollary \ref{push-forward-sing},
the variety $S$ has cyclic quotient singularities of type $A_{m-1}$ (see Definition \ref{def-Qconic}),
and thus, we have
$$
c_{1}(K_{R(X)}) = c_{1}(\gamma^{*}K_{S} )=0.
$$
The classification of surfaces (e.g.,\,see \cite{BHPV}) implies that
$R(X)$ is a ($2$-dimensional) torus or a K3 surface.
Note that the possibilities of Enriques surfaces and hyperelliptic surfaces
are excluded by the non-projectivity of $R(X)$.

We consider the case of $q=\dim A(X)>0$.
The Albanese map $\alpha\colon  X \to A(X)$ is surjective by \cite[Theorem 1.4]{Pau17},
and thus, so is $\beta \colon S \to A(X)$ in Theorem \ref{3-fold-MMP}.
By pulling back $1$-forms on $A(X)$ using
$\gamma \colon R(X) \to S$ and $\beta \colon S \to A(X)$,
we can see that $R(X)$ is a  torus,
which indicates that $q \geq 2$.
In the case of $q=\dim A(X)=3$, the manifold $X$ is non-uniruled, which contradicts $\dim R(X) =2$.
Thus, we can conclude that $q=2$ and $R(X)$ is a torus.

We consider the remaining case of $q=\dim A(X)=0$.
In this case, the manifold $X$ is simply connected,
and thus,  so is $R(X)$
by $\pi_1(X) \cong \pi_1(R(X))$ (see \cite[Theorem 5.2]{Kol} and \cite[Theorem 4.1]{BC}),
which indicates that $R(X)$ is a K3 surface.
\end{proof}

\subsection{The case of $X$ being non-simply connected }\label{subsec4-2}

In this subsection, we prove Theorem \ref{theo-non-proj}
under assuming that $\pi_{1}(X) \cong \mathbb{Z}^{\oplus 2q}$, $q \not= 0$, and $\dim R(X)=2$.

\begin{theo} \label{thm-alb2}
Consider the same situation as in the beginning of Subsection \ref{subsec4-2}.
Then, up to a finite \'etale cover of $X$,
there exists a numerically flat vector bundle $\mathcal{F}$ on $A(X)$
such that $X$ is isomorphic to the projective space bundle $\mathbb{P}(\mathcal{F})$ over $A(X)$.
\end{theo}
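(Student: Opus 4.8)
The plan is to feed the Mori fibre space produced by Theorem~\ref{3-fold-MMP} into the positivity result Theorem~\ref{theo-flat} and read off the projective bundle structure. First I set up the geometry: apply Theorem~\ref{3-fold-MMP} to obtain $\pi\colon X\dashrightarrow X'$, an MFS $\varphi\colon X'\to S$, and a morphism $\beta\colon S\to A(X)$. By Corollary~\ref{push-forward-sing} the discriminant of $\varphi$ is empty, $c_1(K_S)=0$, and $S$ has only rational double points; but the minimal resolution of $S$ is the smooth minimal MRC base $R(X)$, which is a $2$-torus by Proposition~\ref{prop-mrc}, and a torus contains no $(-2)$-curve, so $S=R(X)$ is smooth. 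Hence, by the last assertion of Corollary~\ref{push-forward-sing}, $X'$ is smooth and $\varphi\colon X'\to S$ is a locally trivial $\PP^1$-bundle, i.e.\ a conic bundle with empty discriminant. Moreover $\beta$ is a surjective, hence isogenous, morphism of $2$-tori, and it is compatible with the isomorphisms $\pi_1(X)\xrightarrow{\sim}\pi_1(R(X))=\pi_1(S)$ and $\pi_1(X)\xrightarrow{\sim}\pi_1(A(X))$ (the latter because $\pi_1(X)$ is torsion-free), so $\beta_*$ is an isomorphism and therefore so is $\beta$. I identify $S=A(X)$; thus $\varphi\colon X'\to A(X)$ is the Albanese map of $X'$.

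Next I apply Theorem~\ref{theo-flat} to $\varphi\colon X'\to A(X)$ with $L:=-K_{X'}$. Condition (b) holds since $K_{A(X)}=0$; condition (d) is automatic because $A(X)$ is smooth and any line bundle on a torus carries a smooth metric of arbitrary constant curvature; condition (a) follows from Proposition~\ref{prop-intersection}\,(1), since $c_1(-K_{X'})+\e\{\omega'\}$ is represented by a positive current smooth off the non-biholomorphic locus of $X\dashrightarrow X'$, a set of codimension $\ge 2$ in $X'$ hence not dominating the surface $A(X)$, and this also shows $-K_{X'}$ is pseudo-effective, giving (c) together with $\varphi$-ampleness. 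Therefore $\mathcal V_p=\varphi_*(-pK_{X'})\otimes\bigl((p/r)\det\varphi_*(-K_{X'})\bigr)^*$ and $(\det\mathcal V_p)^*$ are weakly positively curved for $p\gg1$ with $r=3\mid p$. On the torus $A(X)$, a class such that both a representing positive current and its negative are pseudo-effective has zero mass against a Kähler class, hence is numerically trivial; so $c_1(\mathcal V_p)=0$, and Theorem~\ref{thm-flat}\,(1) shows $\mathcal V_p$ is a numerically flat vector bundle on $A(X)$.

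It remains to descend this to a rank-$2$ bundle $\mathcal F$ and to show $\pi$ is an isomorphism. By Proposition~\ref{basic-conic-bundle} and adjunction the conic bundle embeds as $X'\hookrightarrow\PP(E)$ with $E:=\varphi_*(-K_{X'})$ of rank $3$ and $-K_{X'}=\OX_{\PP(E)}(1)|_{X'}$; since the discriminant is empty this is a smooth relative conic, so $\varphi$ is a form of a $\PP^1$-bundle and the obstruction to writing it as $\PP(\mathcal F)$ for a genuine rank-$2$ bundle is a $2$-torsion Brauer class, killed by pulling $X$ back along the degree-$16$ étale cover induced by $[2]\colon A(X)\to A(X)$ (multiplication by $2$ acts as $0$ on $H^2(A(X),\ZZ/2)$); this preserves all hypotheses and conclusions so far. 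After this cover, $\varphi_*(-pK_{X'})\cong\Sym^{2p}\mathcal F\otimes(\det\mathcal F)^{\otimes(-p)}$, and numerical flatness of $\mathcal V_p\cong\Sym^{2p}\mathcal F\otimes(\det\mathcal F)^{\otimes(-p)}$ forces $\mathcal F$ to be semistable (a destabilizing sub-line-bundle would produce a positive-degree sub-line-bundle of $\mathcal V_p$) with vanishing Bogomolov discriminant (proportional, with positive ratio, to $\Delta(\mathcal V_p)=0$); hence $\mathcal F$ is projectively flat, $\Sym^2\mathcal F\otimes(\det\mathcal F)^*=\mathrm{ad}(\mathcal F)$ is Hermitian flat, and after one further étale cover absorbing the residual $2$-torsion lifting obstruction, $\mathcal F$ is, up to twist, numerically flat on $A(X)$. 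Finally, numerical flatness of $\mathcal F$ makes $-K_{X'}=2\,\OX_{\PP(\mathcal F)}(1)-\varphi^*\det\mathcal F$ nef, and every rational curve on $X'=\PP(\mathcal F)$ is a $\varphi$-fibre (its image in the abelian surface is a point), on which $-K_{X'}$ has degree $2$; so $X'$ carries no $-K_{X'}$-trivial or $-K_{X'}$-negative rational curve, whence $\pi$ admits no flip or flop, and a divisorial contraction in \eqref{eq-bir} would force the Albanese map $\alpha\colon X\to A(X)$ to have a nonzero discriminant divisor $\Delta_\alpha$, contradicting that pushing $(-K_X)^2$ forward as in Corollary~\ref{push-forward-sing} (with $-K_X$ genuinely nef) makes $-c_1(\Delta_\alpha)$ pseudo-effective while $\Delta_\alpha$ is effective. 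Thus $X\cong X'=\PP(\mathcal F)$.

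The main obstacle is the third step: extracting numerical flatness of $\mathcal F$ itself from that of the high symmetric power $\mathcal V_p=\Sym^{2p}\mathcal F\otimes(\det\mathcal F)^{-p}$, while simultaneously clearing the $2$-torsion Brauer and lifting obstructions by finite étale covers. This is exactly where the semistability and Bogomolov-inequality input, combined with the structure theory of numerically flat bundles on tori (Theorem~\ref{thm-flat}), is essential, and where one must check that the successive étale covers do not disturb the normalizations used in the earlier steps.
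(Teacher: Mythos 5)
Your skeleton (identify $S\cong A(X)$, realize $X'$ as $\PP(\mathcal F)$ with $\mathcal F$ numerically flat, then show $X\dashrightarrow X'$ is an isomorphism) is the same as the paper's, but your final step contains a genuine gap. You assert that a divisorial contraction in the last step of the MMP would force the Albanese map $\alpha\colon X\to A(X)$ to acquire a nonzero discriminant \emph{divisor}. This is false in two of the three configurations that must be excluded: if the exceptional divisor $E\subset X_{N-1}$ is contracted to a point of $X'$, or to a curve $C$ contained in a single $\varphi$-fiber, then the locus where $\alpha$ fails to be a smooth $\PP^1$-fibration lies over finitely many points of the surface $A(X)$, so it contributes nothing divisorial and your argument yields no contradiction. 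These are exactly the cases the paper must treat by separate means: contraction to a point is excluded via $c_1(K_{X'})^2=0$ and $(c_1(K_{X_{N-1}})^2\cdot\{\omega\})<0$, contradicting Proposition \ref{prop-intersection}\,(2) (Claim \ref{claim-a2}); contraction to a curve is excluded by first showing, via \cite[Proposition 1.6]{Cao13}, that $C$ must be a $\varphi$-fiber, and then producing the surface $V$ coming from the filtration $0\to L_1\to\mathcal F\to L_2\to 0$, with $K_{X'}|_V$ numerically trivial and $\bar V\cap E$ an effective curve, again contradicting Proposition \ref{prop-intersection}\,(2) (Claim \ref{claim-a3}). Moreover, even in the one case you do address (the contracted curve dominating a curve in $A(X)$), you may not apply the pushforward formula of Proposition \ref{push-forward-formula} / Corollary \ref{push-forward-sing} to $\alpha$ itself: those results require a conic bundle ($-K$ relatively ample and equidimensional), which is precisely what is not yet known for $\alpha$ at this stage; the paper avoids this by invoking Cao's smoothness-in-codimension-one result rather than a pushforward of $(-K_X)^2$.

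A secondary flaw is your descent from numerical flatness of $\mathcal V_p\cong\Sym^{2p}\mathcal F\otimes(\det\mathcal F)^{\otimes(-p)}$ to flatness of $\mathcal F$: the implication ``semistable with vanishing Bogomolov discriminant implies projectively flat'' fails for strictly semistable, non-polystable bundles (already a non-split extension of $\OX$ by $\OX$ on the torus has $\Delta=0$ and is semistable, yet $\Sym^2$ of it is not Hermitian flat), and the ``residual $2$-torsion lifting obstruction'' and ``up to twist'' steps are left unspecified. This part is repairable: numerical flatness of $\mathcal V_p$ restricted along the relative Veronese embedding $\PP(\mathcal F)\hookrightarrow\PP(\Sym^{2p}\mathcal F)$ already gives nefness of $-K_{X'}$, and once $\det\mathcal F\cong\OX_{A(X)}$ is arranged (the paper does this via \cite[Lemma 7.4]{CP91}), nefness of $-K_{X'}=\OX_{\PP(\mathcal F)}(2)$ yields numerical flatness of $\mathcal F$. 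Note that the paper itself proves nefness of $-K_{X'}$ by a different analytic route, namely regularization of the currents from Proposition \ref{prop-intersection}\,(1) together with $\varphi_*(T_\e^2)\to 0$ and the uniform decay of Lelong numbers, without passing through Theorem \ref{theo-flat} at all; your use of Theorems \ref{theo-flat} and \ref{thm-flat} is closer to the argument the paper reserves for the simply connected case.
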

\begin{proof}

We initially show that $\beta\colon S \to A(X)$ in Theorem \ref{3-fold-MMP} is actually isomorphic.
Note that $A(X)$ is a $2$-dimensional torus by Proposition \ref{prop-mrc} and
$\beta \colon  S \to A(X)$ is surjective by \cite[Theorem 1.4]{Pau17}.
Then, we have $K_{S}=\beta^{*}K_{A(X)}+E$
for some effective divisor $E$ supported in the ramified locus of $\beta \colon S \to A(X)$.
By $c_1(K_{A(X)})=0$ and $c_1(K_S)=0$ (see Corollary \ref{push-forward-sing}),
we deduce that $E=0$,
which indicates that $\beta \colon S \to A(X)$ is \'etale and $S$ is a torus.
We can see that $\beta \colon S \to A(X)$ is isomorphic by the universal property of Albanese maps.

By Corollary \ref{push-forward-sing} (2),
the MFS $\varphi\colon  X' \to S \cong A(X)$ in Theorem \ref{3-fold-MMP} is a $\mathbb{P}^{1}$-bundle;
hence, up to a finite \'etale cover of $X'$,
there exists a vector bundle $\mathcal{F}$ on $A(X)$
such that $X' \cong \mathbb{P}(\mathcal{F})$ and $\det(\mathcal{F}) =\mathcal{O}_{A(X)}$
by \cite[Lemma 7.4]{CP91} (see \cite{El82} for more information on Brauer groups).
We show that $\mathcal{F}$ can be chosen to be numerically flat.
We emphasize that the following discussion works only when $X'$ and $S$ are smooth.
By the formula
$$-K_{X'}=\mathcal{O}_{\mathbb{P}(\mathcal{F})}(2)-\varphi^{*}\det \mathcal{F}=\mathcal{O}_{\mathbb{P}(\mathcal{F})}(2),$$
it suffices to show that $-K_{X'}$ is nef.
By applying the regularization theorem \cite[Proposition 3.7]{Dem92}
to the current in Proposition \ref{prop-intersection} (1),
we obtain positive currents $T_\e \in c_1(-K_{X'})+\e \{\omega_{X'}\} $
with analytic singularities such that the singular locus of $T_{\e}$ is not dominant over $S$.
By the proof of Corollary \ref{push-forward-sing},
we have $\varphi_*(c_1(-K_{X'})^2)=0$ and $\lim_{\e \to 0} \varphi_*(T_\e^2)=0$;
hence,  the Lelong number of $T_\e$ uniformly converges to $0$ on $X'$
by \cite[Lemma 15]{Wu22b}.
Thus, we can conclude that $-K_{X'}$ is nef
by the regularization with smooth forms (see \cite[Theorem 1.1]{Dem92},
cf.\,the end of proof of \cite[Theorem 4]{Wu22b} and \cite[Corollary 6]{Wu22b}).

\smallskip

We finally show that $X \dashrightarrow X' $
is actually an isomorphism.
To achieve this, we focus on the final step $X_{N-1} \dashrightarrow X_{N}=X'$ of the MMP $($see \eqref{eq-bir}$)$ and divide the proof into three subsequent claims.

\begin{claim}\label{claim-a1}
$X_{N-1} \dashrightarrow X_{N}=X'$  cannot be a flip.
\end{claim}
\begin{proof}
Every rational curve $R \subset X'$  is contained in a fiber of
$\varphi\colon X' \cong \mathbb{P}(\mathcal{F}) \to S\cong A(X)$.
Consequently, the intersection number $(R \cdot c_{1}(-K_{X'}))=\deg (-K_{\mathbb{P}^{1}})$ is positive.
This implies that $X'$ has no $K_{X'}$-positive rational curves;
therefore $X_{N-1} \dashrightarrow X_{N}=X'$ cannot be a flip
since a flip generates a $K_{X'}$-positive rational curve.
\end{proof}

We consider the possible divisorial contraction $\pi:=\pi_{N-1} \colon X_{N-1} \to X_{N}=X'$
with the exceptional divisor $E$.

\begin{claim}\label{claim-a2}
$X_{N-1} \dashrightarrow X_{N}=X'$ cannot be a divisorial contraction contracting $E$ to the one point.
\end{claim}
\begin{proof}

Since $X_{N-1}$
has  terminal singularities,
we have
\begin{align}\label{eq-can}
-K_{X_{N-1}}=\pi^*(- K_{X_{N}})-aE \text{ \quad for some }a>0.
\end{align}
We can  confirm that the equality $c_{1}(K_{X_{N}})^2 = 0$ is satisfied.
Indeed, the condition $c_{1}(K_{X_{N}})^2 = 0$ remains invariant under finite \'etale covers,
which permits us to assume that  $X'=X_{N}=\mathbb{P}(\mathcal{F})$ 
with  a numerically flat vector bundle $\mathcal{F}$.
Then, the desired equality $c_{1}(K_{X_{N}})^2 = 0$ can be obtained
from $K_{X_N} = \mathcal{O}_{\mathbb{P}(\mathcal{F})}(-2)$ and
$$
c_1(\mathcal{O}_{\mathbb{P}(\mathcal{F})}(1))^2 =
\varphi^{*}(c_1(\mathcal{F})) c_1(\mathcal{O}_{\mathbb{P}(\mathcal{F})}(1)) - \varphi^{*}(c_2(\mathcal{F}) )= 0.
$$
By noting that $E$ is contracted to the one point,
we can see that $c_1(-K_{X_{N-1}})^2 = a^2 c_1(E)^2$ by $c_{1}(K_{X_{N}})^2 = 0$. 
Then, since $\OX_E(-E)$ is ample, we obtain
$$
(c_1(K_{X_{N-1}})^2 \cdot \{\omega_{X_{N-1}}\})=
a^2 (c_{1} (E)|_{E} \cdot \{\omega_{N-1}\}|_E)<0.
$$
This contradicts Proposition \ref{prop-intersection} (2).
\end{proof}

\begin{claim}\label{claim-a3}
$X_{N-1} \dashrightarrow X_{N}=X'$ cannot be
a divisorial contraction contracting $E$ to the curve $C:=\pi(E)$.
\end{claim}
\begin{proof}
Assume that there exists a surface $V \subset X_{N}$ such that
$K_{X_{N}}|_{V}$ is numerically trivial and $\bar {V} \cap E$  is an effective curve,
where $\bar {V} \subset X_{N-1}$ is the strict transform of $V$.
Then, by \eqref{eq-can}, we can see that
$$
(c_1(-K_{X_{N-1}}) \cdot c_1(\bar {V}) \cdot\omega_{N-1})  =
a (c_1(-E) \cdot c_1(\bar {V}) \cdot\omega_{N-1})<0.
$$
This contradicts Proposition \ref{prop-intersection} (2).

To find such a surface $V$, we show that $C$ is a fiber of $\varphi\colon  X_{N}=X' \to A(X)$,
which follows since $X \to A(X)$ is a smooth fibration
outside a Zariski closed set of $A(X)$ of codimension  $\geq 2$ by \cite[Proposition 1.6]{Cao13}.
Indeed, otherwise, the image $\varphi(C)$ is a curve in $A(X)$.
Then, a fiber of $X_{N-1} \to X' \to A(X)$ at a general point $p \in \varphi(C)$
has at least two irreducible components:
the strict transform of the fiber $F$ of $\varphi\colon  X' \to A(X)$ at $p$
and the inverse image of $F \cap C$.
This contradicts that $X \to A(X)$ is smooth in codimension $1$.
Meanwhile, by \cite[Theorem 1.18]{DPS94},
the numerically flat vector bundle $\mathcal{F}$ is constructed from a $\rm{GL}$-representation of $\pi_1(S)$.
Furthermore, since $\pi_1(S)$ is abelian,
this representation is the direct sum of $1$-dimensional representations.
Hence, there exist Hermitian flat line bundles $L_1, L_2$ on $S$
such that
$$0 \to L_1 \to \mathcal{F} \to L_2 \to 0.$$
Define the surface $V \subset X'=X_{N}$ by the image of $\PP(L_2) \subset \mathbb{P}(\mathcal{F})$ via 
$\mathbb{P}(\mathcal{F}) \to X'=X_{N}$.
Then, we can easily see that $K_{X_{N}}|_{V}$ is numerically trivial and
the intersection $\bar {V} \cap E$ is the fiber $\pi \colon X_{N-1} \to X'=X_{N}$
at the non-empty $0$-dimensional varieties $V\cap C$.
\end{proof}
This finishes the proof of Theorem \ref{theo-non-proj}
under assuming that $\pi_{1}(X) \cong \mathbb{Z}^{\oplus 2q}$, $q \not= 0$, and $\dim R(X)=2$.
\end{proof}

\begin{rem}\label{rem-cao}
The final case (where $E$ is contracted to the curve $C$)
can be actually excluded by another approach without using \cite[Proposition 1.6]{Cao13}
(see the final step of the proof of Theorem \ref{thm-pro}).
However, this approach explained in Theorem \ref{thm-pro} is quite complex, and thus
we provide a more straightforward proof here using \cite[Proposition 1.6]{Cao13}.
\end{rem}

\subsection{The case of $X$ being simply connected}\label{subsec4-3}

In this subsection, we prove Theorem \ref{theo-non-proj}
under assuming that $\pi_{1}(X)=\{\id\}$ and $\dim R(X) =2$.
Compared to Subsection \ref{subsec4-2},
a notable challenge is that $S$ may have singularities,
which prevents us from employing the same argument as in Theorem \ref{thm-alb2}.
The following example helps us to understand this difficulty.

\begin{ex}
For a $2$-dimensional torus $A$,
we consider
$$X':=(\PP^1 \times A)/\mu_2 \to S=A/\mu_2, $$
where $\mu_2:=\mathbb{Z}/2\mathbb{Z}$  acts on  $\PP^1 \times A$
by $(-1) \cdot (t, z_1,z_2)=(-t,-z_1,-z_2)$.
Both $S$ and $X'$ are simply connected and
$\varphi\colon  X' \to S$ is a $\mathbb{Q}$-conic bundle such that $-K_{X'}$ is nef.
Nevertheless, the fibration $\varphi\colon  X' \to S$ is not even locally trivial.
\end{ex}

In fact, the above example never appears as an outcome of the MMP of $X$ with nef anti-canonical bundle.
More precisely, the following theorem can be proved:
\begin{theo}\label{thm-pro}
Consider the same situation as in the beginning of Subsection \ref{subsec4-3}.
Then, the manifold $X$ is isomorphic to the product of a K3 surface and the projective line $\PP^1$.
\end{theo}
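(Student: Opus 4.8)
The plan is to follow the same outline as in Theorem \ref{thm-alb2}, but replacing the Albanese map with the MRC base $R(X)$, which is now a K3 surface by Proposition \ref{prop-mrc}. As before, run the MMP of Theorem \ref{3-fold-MMP} to obtain $X \dashrightarrow X' \to S$ with $\varphi\colon X' \to S$ a $\mathbb{Q}$-conic bundle, where by Corollary \ref{push-forward-sing} we know $\Delta = 0$, $c_1(K_S) = 0$, $\kappa(S) = 0$, the fibration is toroidal, and $S$ has only rational double points. The minimal resolution $\gamma\colon R(X) \to S$ is crepant, and since $X$ (hence $R(X)$, hence $S$) is simply connected, $S$ is in fact a K3 surface with rational double points. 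The first main task is to upgrade $\varphi\colon X' \to S$ from a $\mathbb{Q}$-conic bundle to an honest $\mathbb{P}^1$-bundle. Here the obstacle is precisely that $S$ may carry quotient singularities (the Example shows this is a genuine phenomenon for general $\mathbb{Q}$-conic bundles with nef anticanonical), so the key input must be the theory of orbifold vector bundles from Subsection \ref{subsec2-4}: $S$ is a K\"ahler orbifold, $\varphi_*(-mK_{X'/S} + L)$ is an orbifold vector bundle for suitable $\varphi$-ample $L$, and Theorem \ref{theo-flat} produces a sheaf $\mathcal{V}_p$ with $\mathcal{V}_p$ and $(\det\mathcal{V}_p)^*$ both weakly positively curved and $c_1(\mathcal{V}_p) = 0$. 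By Theorem \ref{thm-flat}(1), $\mathcal{V}_p$ is a numerically flat orbifold vector bundle on $S$.

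The second step is to trivialize this orbifold bundle. Since $S_\reg$ is a K3 surface minus finitely many points (the images of the rational double points), we have $\pi_1(S_\reg) = \{\id\}$ and $H^1(S_\reg, \mathcal{O}_S) = H^1(S,\mathcal{O}_S) = 0$ (K3 surfaces are regular, and removing codimension-$2$ points does not change $H^1$ of the structure sheaf). Hence Corollary \ref{cor-flat} applies and $\mathcal{V}_p$ is a trivial vector bundle on $S$. Tracing this back through the definition of $\mathcal{V}_p$ in Theorem \ref{theo-flat} and the standard argument of \cite{CH19} (which now runs on the orbifold $S$ without needing an ample bundle on $X'$), we conclude that $\varphi\colon X' \to S$ is a locally constant fibration: the bundle structure group reduces to a finite subgroup of $\operatorname{PGL}_2$, and since $\pi_1(S_\reg)$ is trivial the monodromy is trivial, so $\varphi\colon X' \to S$ is analytically locally trivial with fiber $\mathbb{P}^1$. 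In particular $X'$ is smooth, and by Corollary \ref{push-forward-sing}(2) combined with smoothness of $S$ being forced now, $\varphi\colon X' \to S$ is a $\mathbb{P}^1$-bundle over a smooth K3 surface $S = R(X)$, written $X' \cong \mathbb{P}(\mathcal{F})$ with $\mathcal{F}$ numerically flat of rank $2$. Since $H^1(S, \mathcal{O}_S) = 0$, the numerically flat bundle $\mathcal{F}$ has trivial determinant after a twist, and the extension filtration of Theorem \ref{thm-flat}(2) has Hermitian flat quotients; but Hermitian flat bundles on the simply connected $S$ are trivial, so $\mathcal{F} \cong \mathcal{O}_S^{\oplus 2}$ and $X' \cong S \times \mathbb{P}^1$.

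The final and most delicate step is to show that the bimeromorphic map $X \dashrightarrow X'$ is an isomorphism, so that $X \cong S \times \mathbb{P}^1$ with $S$ a K3 surface. As in Theorem \ref{thm-alb2}, analyze the last step $X_{N-1} \dashrightarrow X_N = X'$ of \eqref{eq-bir}. It cannot be a flip: every rational curve in $X' \cong S \times \mathbb{P}^1$ lies in a fiber of the projection to $S$ (since $S$ is non-uniruled with no rational curves through a general point — and in fact a K3 contains no rational curves dominating it), so $X'$ has no $K_{X'}$-positive rational curves, contradicting the existence of a flipped curve. For a divisorial contraction $\pi\colon X_{N-1} \to X'$, write $-K_{X_{N-1}} = \pi^*(-K_{X'}) - aE$ with $a > 0$. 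If $E$ is contracted to a point, then using $c_1(K_{X'})^2 = 0$ (which holds since $K_{X'} = \mathcal{O}_{\mathbb{P}(\mathcal{F})}(-2)$ with $\mathcal{F}$ numerically flat, as in Claim \ref{claim-a2}) one gets $(c_1(K_{X_{N-1}})^2 \cdot \omega_{N-1}) = a^2(c_1(E)|_E \cdot \omega_{N-1}|_E) < 0$ because $\mathcal{O}_E(-E)$ is ample, contradicting Proposition \ref{prop-intersection}(2). If $E$ is contracted to a curve $C = \pi(E)$, the main obstacle is that the easy argument of Claim \ref{claim-a3} used the Albanese map and \cite[Proposition 1.6]{Cao13}; here I expect to instead argue directly, as flagged in Remark \ref{rem-cao}: the curve $C$ must be a fiber of $\varphi\colon X' = S \times \mathbb{P}^1 \to S$ because $X \to R(X)$ is a smooth fibration in codimension $1$ (any multisection or horizontal curve would force reducible fibers of $X_{N-1} \to S$ over a divisor), and then taking $V = S \times \{\text{pt}\}$ (a section of $\varphi$ meeting $C$) gives a surface with $K_{X'}|_V$ numerically trivial and $\bar V \cap E \neq \emptyset$ an effective curve, whence $(c_1(-K_{X_{N-1}}) \cdot c_1(\bar V) \cdot \omega_{N-1}) = a(c_1(-E) \cdot c_1(\bar V) \cdot \omega_{N-1}) < 0$, again contradicting Proposition \ref{prop-intersection}(2). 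This exhausts all cases, so $X \dashrightarrow X'$ is an isomorphism and $X \cong (\text{K3 surface}) \times \mathbb{P}^1$, completing the proof.
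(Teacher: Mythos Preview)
There is a genuine gap in your second step. Your application of Corollary \ref{cor-flat} to $S$ rests on two claims that both fail in general: that $\pi_1(S_\reg)=\{\id\}$ and that $H^1(S_\reg,\OX_S)=0$. For the first, consider the Kummer situation $S=A/\mu_2$ with $A$ a $2$-torus: the minimal resolution $R(X)$ is a K3 surface and $\pi_1(S)=\{\id\}$, but $S_\reg=(A\setminus\{\text{2-torsion}\})/\mu_2$ has fundamental group containing $\ZZ^4$ as an index-$2$ subgroup. This is exactly why the paper first passes to the quasi-\'etale orbifold cover $\tau\colon S^\dagger\to S$ from \cite{Cam04b}, which kills the orbifold fundamental group; crucially $S^\dagger$ can be a torus, so the paper must split into two cases and treat Case~1 (torus cover) with a substantially different argument. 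For the second claim, the paper explicitly notes that the $2$-dimensional analogue of Lemma \ref{lem-topo} is false: for a surface with an $A_{m-1}$ point the local cohomology $H^2_{\{0\}}(\CC^2/\mu_m,\OX)$ is nonzero, so $H^1(S_\reg,\OX)\neq H^1(S,\OX)$. The paper's workaround is to pull $\mathcal{V}^\dagger_p$ back to the $3$-dimensional space $X^\dagger$, where Lemma \ref{lem-topo} does apply, and only then invoke Corollary \ref{cor-flat}.

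Because of this, your conclusion that $S$ is automatically smooth is unjustified, and everything downstream (the product structure on $X'$, and your treatment of the divisorial contraction to a curve via a section $S\times\{\mathrm{pt}\}$) collapses. In the paper's Case~1 the target $X^\dagger\cong\PP(\mathcal F)$ lives over a torus $S^\dagger$, and excluding the divisorial contraction of a surface to a curve (Claim \ref{claim-b4}) requires a much longer argument: one shows the preimage curve $C^\dagger$ is a disjoint union of elliptic curves, identifies the base-changed contraction with a genuine blow-up via Lemma \ref{divisorial_con}, and then derives a contradiction from an intersection-number computation. Your sketch does not account for this case at all.
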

\begin{proof}
By \cite[Corollary 6.7]{Cam04b},
there exists a quasi-\'etale cover $\tau \colon  S^{\dagger} \to S$
such that $S^{\dagger}$ is either a torus or a normal K3 surface
(i.e.,\,a normal surface whose minimal resolution is a K3 surface)
and $\tau \colon  S^{\dagger} \to S$ is an orbifold morphism
(i.e.,\,it can be locally described  as $\tau\colon U^\dagger/G^\dagger \to U/G$
induced by a morphism $\hat \tau\colon U^\dagger \to U$  and a group homomorphism $\rho\colon G^\dagger \to G$
such that $\hat \tau(g z)=\rho(g) \hat \tau(z)$ holds for any $g \in G^\dagger$ and $z \in U^\dagger$,
where $U^\dagger$ (resp.\,$U$) is a local smooth ramified cover
with the linear action of the finite group $G^\dagger$ (resp.\,$G$).)
Note that any normal K3 surface is simply connected.
We consider the base change:
\[
\xymatrix{
X^\dagger \ar[d]_{\varphi^\dagger} \ar[r]^{\nu} & X' \ar[d]^{\varphi} \\
S^\dagger \ar[r]_{\tau} & S
}
\]

The fibration $\varphi \colon X' \to S$
satisfies the assumptions of Theorem \ref{theo-flat}
when we set $Y_{0}:=S_{\reg}$ and $L:=-K_{X'}$.
Indeed, Conditions (a), (b), (c) are confirmed by Proposition \ref{prop-intersection},
Corollary \ref{push-forward-sing}, and Theorem \ref{3-fold-MMP}.
Furthermore, Condition (d) can be confirmed by orbifold structures as follows:
Indeed, the reflexive sheaf $\mathcal{V}_{p}$ defined by $L=-K_{X'}$ as in Theorem \ref{theo-flat}
is an orbifold vector bundle on the orbifold $S$.
By \cite[Lemma 1]{Wu23}, there exists a continuous function $\psi$ on $S$
whose pull-back on each local smooth ramified cover is smooth such that
$\omega_S+ \sqrt{-1} \partial \overline{\partial} \psi$ defines an orbifold K\"ahler form on $S$.
For a smooth orbifold metric $g$ on $\det \mathcal{V}_p$,
we take  $C \gg 1$ such that
$\sqrt{-1}\Theta_{g} +C(\omega_S+ \sqrt{-1} \partial \overline{\partial} \psi)$
is positive on each local smooth ramified cover.
Then $g e^{-C\psi}$ is a metric satisfying Condition (d).
Consequently, Theorems \ref{theo-flat} and \ref{thm-flat} show that
$\mathcal{V}_p$ is a numerical flat orbifold vector bundle on $S$.
In the same way, we deduce that the fibration $\varphi^{\dagger} \colon X^{\dagger} \to S^{^{\dagger}}$
also satisfies the assumptions of Theorem \ref{theo-flat}
when we set $Y_{0}:=S^{\dagger} \setminus \tau^{-1}(S_{\sing})$ and $L=-K_{X^{\dagger}}$,
by noting that  $\nu\colon X^{\dagger} \to X'$ is a quasi-\'etale cover and
$-K_{X^{\dagger}}$ is $\varphi^{\dagger}$-ample by the construction of
$\varphi^{\dagger} \colon X^{\dagger} \to S^{\dagger}$.
Consequently, the sheaf $\mathcal{V}^{\dagger}_p $ defined by $L=-K_{X^{\dagger}}$
is also a numerical flat orbifold bundle on $S^{\dagger}$.

We divide our situation into Case 1 and Case 2,
and respectively prove Claim \ref{claim-st1} and Claim \ref{claim-st2}.
\smallskip
\\
\quad {\bf{Case 1}}: The case where $S^{\dagger}$ is a ($2$-dimensional) torus. \\
\quad{\bf{Case 2}}: The case where $S^{\dagger}$ is a normal K3 surface.

\begin{claim}\label{claim-st1}
In Case 1, up to a finite \'etale cover of  $S^{\dagger}$,
the variety $X^\dagger$ is isomorphic to the projective space bundle $\PP(\mathcal{F})$
of a numerically flat vector bundle $\mathcal{F}$ over the torus $S^{\dagger}$.
Moreover, the vector bundle $\mathcal{F}$ admits a filtration by Hermitian flat line bundles$:$
$$0 \to L_1 \to \mathcal{F} \to L_2 \to 0.$$
\end{claim}
\begin{proof}
Since $S^{\dagger}$ is smooth,
the sheaf $\mathcal{V}^{\dagger}_p $ defined as above
is a numerical flat vector bundle on the torus $S^{\dagger}$ by \cite[Main Theorem]{Wu22b}.
Then, \cite[Proposition 2.5, Remark 2.6 (b)]{MW} shows that
$\varphi^{\dagger}\colon X^{\dagger} \to S^{\dagger}$
is locally constant.
In particular, the variety $X^{\dagger}$ is smooth and
$\varphi^{\dagger}\colon X^{\dagger} \to S^{\dagger}$
is a $\mathbb{P}^{1}$-bundle.
Then, the first conclusion follows from the same argument as in Theorem \ref{thm-alb2}.

The flat vector bundle $\mathcal{F}$ is constructed from
a $\rm{GL}$-representation of the fundamental group $\pi_{1}(S^{\dagger})$.
Thus, since $\pi_{1}(S^{\dagger})$ is abelian,
the vector bundle $\mathcal{F}$ admits the desired filtration.
\end{proof}

\begin{claim}\label{claim-st2}
In Case 2, the variety $X^\dagger$ is isomorphic to the product $S^{\dagger} \times \PP^1$.
\end{claim}
\begin{proof}

We initially show that $\pi_1(X^{\dagger}_{\reg})=\{\id\}$ by applying the Van Kampen theorem.
Note that any normal K3 surface is simply connected.
The variety $S^{\dagger}$ is the universal cover in the sense of orbifolds (see \cite[D\'efinition 5.3]{Cam04b});
hence, we  obtain $\pi_1(S^{\dagger}_\reg) \cong \pi_1(S^{\dagger})=\{\id\}$.
Since $\varphi \colon X' \to S$ is a smooth $\PP^1$-bundle on $S_\reg$ (which is preserved under the base change),
we deduce that $\pi_1((\varphi^{\dagger})^{-1}(S^{\dagger}_\reg))=\{\id\}$.
Near a singular point in $S_{\sing}$ at which $\tau \colon S^{\dagger} \to S$ is not \'etale,
both $\varphi\colon  X' \to S$ and  $\varphi^{\dagger} \colon  X^{\dagger}  \to S^{\dagger}$
can be locally described as follows:
\[
\xymatrix{
(\mathbb{P}^1 \times \mathbb{C}^2)/\mu_{m^\dagger} \ar[d]_{\varphi^\dagger} \ar[r]^{\nu} 
& (\mathbb{P}^1 \times \mathbb{C}^2)/\mu_m \ar[d]^{\varphi} \\
\mathbb{C}^2/\mu_{m^\dagger} \ar[r]_{\tau} 
& \mathbb{C}^2/\mu_m
}
\]
Here the action of $\e \in \mu_{m^{\dagger}}$
is given by $\e \cdot (t, z_1,z_2)=(\e^b t, \e z_1, \e^{-1} z_2)$ and
$\tau$ is given by $\CC^2/\mu_{m^{\dagger}} \to \CC^2/\mu_m$
induced by the inclusion $\mu_{m^{\dagger}} \to \mu_m$.
To apply the Van Kampen theorem, we regard $X^{\dagger}_\reg$ as the union of
$(\varphi^{\dagger})^{-1}(S^{\dagger}_\reg)$
and open neighborhoods $V_{i}$ of $(\varphi^{\dagger})^{-1}(S^{\dagger}_\sing)$ in $X^{\dagger}_\reg$.
Then, we can see that
\begin{itemize}
\item[$\bullet$] $V_{i}$  is homeomorphic to
$(\PP^1 \times \CC^2 \setminus \{(0,\boldsymbol{0}),(\infty,\boldsymbol{0})\})/\mu_{m\dagger}$,
\item[$\bullet$]  $(\varphi^{\dagger})^{-1}(S^{\dagger}_\reg) \cap V_{i}$
is homeomorphic to $(\PP^1 \times (\CC^2 \setminus \{\boldsymbol{0}\}) \setminus \{(0,\boldsymbol{0}),(\infty,\boldsymbol{0})\})/\mu_{m\dagger}$,
\end{itemize}
where $\boldsymbol{0}:=(0,0) \in \mathbb{C}^{2}$.
Consider the homomorphism induced by the natural inclusion:
$$
\pi_{1}(\PP^1 \times (\CC^2 \setminus \{\boldsymbol{0}\}) \setminus
\{(0,\boldsymbol{0}),(\infty,\boldsymbol{0})\}/\mu_{m\dagger})
\to \pi_{1}(\PP^1 \times \CC^2 \setminus \{(0,\boldsymbol{0}),(\infty,\boldsymbol{0})\}/\mu_{m\dagger}).
$$
Since $\PP^1 \times (\CC^2 \setminus \{\boldsymbol{0}\}) \setminus \{(0,\boldsymbol{0}),(\infty,\boldsymbol{0})\}$ and
$\PP^1 \times \CC^2 \setminus \{(0,\boldsymbol{0}),(\infty,\boldsymbol{0})\}$ are the universal covers respectively,
the above homomorphism can be regarded as the identity map of $\mu_{m\dagger}$.
Therefore, the Van Kampen theorem shows that  $\pi_1(X^{\dagger}_\reg)=\{\id\}$.

The sheaf $\mathcal{V}^{\dagger}_p $ defined by  $L:=-K_{X^{\dagger}}$
is a numerical flat orbifold vector bundle on  $S^{\dagger}$.
Since $\varphi^{\dagger}\colon X^{\dagger} \to S^{\dagger}$ is an orbifold morphism,
the sheaf $(\varphi^{\dagger *} \mathcal{V}^{\dagger}_p)^{**}$ is also
a numerically flat orbifold vector bundle.
By Lemma \ref{lem-topo} (which is proved later),
we have
$$H^1(X^{\dagger}_\reg, \OX_{X^{\dagger}_\reg})\cong H^1(X^{\dagger}, \OX_{X^{\dagger}})=0.$$
Then, Corollary \ref{cor-flat} shows that $(\varphi^{\dagger *} \mathcal{V}^{\dagger}_p)^{**}$
is a trivial vector bundle on $X^{\dagger}$.
Note that the  analogue of Lemma \ref{lem-topo} in the $2$-dimensional case is false,
which is the reason to consider the pull-back of $\mathcal{V}^{\dagger}_p$ to $X^\dagger$.
Since $\mathcal{V}^{\dagger}_p$ is locally free on $S^{\dagger}_{\reg}$,
the projection formula indicates that
$\mathcal{V}^{\dagger}_p \cong \varphi^{\dagger}_{*}((\varphi^{\dagger *} \mathcal{V}^{\dagger}_p)^{**})$
holds on $S^\dagger_{\reg}$;
hence $\mathcal{V}^{\dagger}_p$  is a trivial vector bundle on $S^\dagger_{\reg}$.
By  \cite[Proposition 2.5]{MW} and $\pi_1(S^\dagger_\reg)=\{\id\}$,
we conclude  that $\varphi^{\dagger}\colon X^{\dagger} \to S^{\dagger}$
gives the product structure $S^\dagger_\reg \times \PP^1$ over $S^\dagger_\reg$.
Precisely speaking, we need to check Condition  (2) in \cite[Proposition 2.5]{MW},
that is, the natural morphism
$\Sym^m \mathcal{V}^{\dagger}_p \to \mathcal{V}^{\dagger}_{pm}$
on $S^\dagger_\reg$  is a morphism of local systems.
In our case, this is automatically satisfied
since the reflexive hulls of $\Sym^m \mathcal{V}^{\dagger}_p$ and $\mathcal{V}^{\dagger}_{pm}$
are trivial vector bundles on $S^\dagger$ (cf.\,\cite[Remark 2.6 (b)]{MW}). 

If the meromorphic map $S^{\dagger} \times \PP^1 \dashrightarrow X^{\dagger}$
(obtained from the above product structure over $S^{\dagger}$) fails to be an isomorphism,
the fiber $\PP^1$ at a singular point of $S^\dagger$ would be contractible by \cite[Proposition 2.1.13]{Kol91}.
However, such a contraction does not exist.
Therefore, we can conclude that $X^{\dagger}=S^{\dagger} \times \PP^1$.
\end{proof}

We finally show that $\pi\colon  X \dashrightarrow X'$ is actually isomorphic.
This finishes the proof of Theorem \ref{thm-pro}.
Indeed, the MFS $\varphi \colon X' \to S$ is a conic bundle
by noting that $\varphi \colon X' \to S$ is toroidal at any point.
The fibration $\varphi\colon  X \cong X' \to S$ is a locally constant $\mathbb{P}^{1}$-bundle
by the same argument as in Theorem \ref{thm-alb2}.
Meanwhile, since $\pi_1(X) \cong \pi_1(S)$ holds
by \cite[Theorem 5.2]{Kol} and \cite[Theorem 4.1]{BC},
the base $S$ is simply connected,
which implies that  $X \cong X'$ is the product of $S$ and $\mathbb{P}^{1}$.

\medskip

To verify that  $\pi\colon  X \dashrightarrow X'$ is isomorphic,
we divide our situation into the four subsequent claims as in the proof of Theorem \ref{thm-alb2}.

\begin{claim}\label{claim-b1}
In both Case 1 and Case 2,
the final step $X_{N-1} \dashrightarrow X_{N}=X'$ of the MMP  cannot be a flip.
\end{claim}
\begin{proof}
The same strategy as in Theorem \ref{thm-alb2} works.
Let $R$ be a rational curve on $X'$.
Let $d$ be the degree of the restriction $\nu^{-1}(R) \to R$ of $\nu \colon  X^{\dagger} \to X'$.
Then, we see that
$$(c_{1}(K_{X'}) \cdot R)=\frac{1}{d} (c_{1}(K_{X^{\dagger}}) \cdot  \nu^{-1}(R)) \leq 0.$$
This implies that $\pi\colon  X_{N-1} \dashrightarrow X_{N}=X'$ cannot be a flip.
\end{proof}

\begin{claim}\label{claim-b2}
In both Case 1 and Case 2,
the final step $\pi\colon  X_{N-1} \dashrightarrow X_{N}=X'$ cannot contract a surface to the one point
\end{claim}
\begin{proof}
Let $d$ be the degree of $\nu\colon  X^{\dagger} \to X'$. 
In both Case 1 and Case 2, we have $K_{X'}^2=(1/d )\nu_* K_{X^{\dagger}}^2=0$ 
since $K_{X^{\dagger}}^2=0$ holds and $\nu\colon  X^{\dagger} \to X' $ is an orbifold morphism.
Then, by the same argument as in Claim \ref{claim-a2},
we obtain a contradiction.
\end{proof}

\begin{claim}\label{claim-b3}
In Case 2,
the final step $\pi\colon  X_{N-1} \dashrightarrow X_{N}=X'$ cannot contract a surface $E$ to a curve $C$.
\end{claim}
\begin{proof}
In Case 2, for a general $ t \in \mathbb{P}^{1}$, the surface $V:=\nu(S^{\dagger} \times \{t\}) \subset X'$  such that
$K_{X_{N}}|_{V}$ is numerically trivial and $\bar {V} \cap E$  is an effective  curve,
where $\bar {V} $ is the strict transform of $V$.
Then, by the same argument as in the first paragraph of Claim \ref{claim-a3}
we obtain a contradiction.
\end{proof}

\begin{claim}\label{claim-b4}
In Case 1,
the final step $\pi\colon  X_{N-1} \dashrightarrow X_{N}=X'$ cannot contract a surface $E$ to a curve $C$.
\end{claim}
\begin{proof}

We initially consider the case where $C$ intersects with the image $V:=\nu(\PP(L_2))$. 
In this case, we can see that $K_{X_{N}}|_{V}$ is numerically trivial  and $\bar {V} \cap E$  is an effective curve, 
where $\bar {V} \subset X_{N-1}$ is the strict transform of $V$. 
Hence, the same argument as in the first paragraph of Claim \ref{claim-a3} yields a contradiction.
Thus, we may assume that $C$ does not intersect with $\nu(\PP(L_2))$.

We will prove that after the base change by $\nu\colon X^{\dagger} \to X$,
the contraction $\pi_{N-1} \colon  X_{N-1} \to X_{N}=X'$
coincides with the blow-up of  $X_{N}=X'$ along $C$, which will lead to a contradiction.
To confirm this, we consider the following diagram:
\[
\xymatrix{
\mathbb{P}(\mathcal{F}|_{T^\dagger}) \ar[r]^{\text{hook}} \ar[d]_{\varphi^\dagger} 
& \mathbb{P}(\mathcal{F}) = X^\dagger \ar[d]_{\varphi^\dagger} \ar[r]^{\nu} 
& X' = X_N \ar[d]^{\varphi} 
& & X_{N-1} \ar[ll]_{\pi := \pi_{N-1}} \\
T^\dagger := \varphi^\dagger(C^\dagger) \ar[r]^{\text{hook}} 
& S^\dagger \ar[r]^{\tau} 
& S = S^\dagger / G & &
}
\]
where $C^{\dagger}:=\nu^{-1}(C)$. 

We now prove that $T^{\dagger}$
is the disjoint union of elliptic curves.
Consider the irreducible decomposition $T^{\dagger}=\cup_{i} T_{i}$.
By \cite[Lemma 10.8]{Uen}, there exists an abelian variety in $S^\dagger$ containing $T_{i}$.
Since $S^\dagger$ is a non-projective torus of dimension $2$,
the curve $T_{i}$ must be an elliptic curve.
If $T_{i} \cap T_{j} \not= \emptyset$ for some $i \not= j$,
there exists $g \in G$  such that $T_{j}=gT_{i}$ holds  by $\tau(T_{i})=
\varphi\circ \nu(C^{\dagger})=\tau(T_{j})$.
Then, up to a translation of $T_{i}$ and $gT_{i}$, 
we can see that $gT_{i} \to S^\dagger \to S^\dagger/T_{i} $ is an isogeny.
By \cite[Proposition 6.1, Chap I]{BL}, up to an isogeny of $S^\dagger$,
the torus $S^\dagger$ is the product of elliptic curves.
In particular $S^\dagger$ is projective, which contradicts the assumption that $S^\dagger$ is a non-projective torus.

Subsequently, we prove that $C^{\dagger}$ is also the disjoint union of elliptic curves. 
For simplicity, we assume that $T^{\dagger}$ is connected (equivalently, $T^{\dagger}$ is irreducible). 
Note that, in the general case, applying the same argument to each connected component of $T^{\dagger}$ yields the desired conclusion. 
To this end, we write the  divisor $C^{\dagger}$ on $\PP(\mathcal{F}|_{T^{\dagger}})$ 
as 
\begin{align}\label{eq-c}
C^{\dagger} = \OX_{\PP(\mathcal{F}|_{T^{\dagger}})}(a) + \varphi^{\dagger*}D
\end{align}
for some $a \in \mathbb{Z}$ and some line bundle $D$ on $T^{\dagger}$, 
and prove that $a>0$ and $D$ is numerically trivial. 
By considering the intersection number with a general fiber of
$\PP(\mathcal{F}|_{T^{\dagger}}) \to T^{\dagger}$,
we can see that $a\geq 0$ since $\OX_{\PP(\mathcal{F}|_{T^{\dagger}})}(1)$ is nef.
Meanwhile, by the exact sequence in Claim \ref{claim-st1}, 
we can see that $c_{1}(\OX_{\PP(\mathcal{F})}(1)) = c_{1}(\PP(L_{2}))$, 
where $\PP(L_{2})$ is regarded as a divisor on $\PP(\mathcal{F})$. 
Hence, by the assumption that $C \cap \nu (\PP(L_2))= \emptyset$,
we have 
\begin{align}\label{eq-inter}
  (C^{\dagger} \cdot \OX_{ \PP( \mathcal{F}|_{T^{\dagger}})}  (1))
=(C^{\dagger} \cdot \OX_{\PP(\mathcal{F})}(1) )=0.
\end{align}
By using \eqref{eq-inter} and $(\OX_{\PP(\mathcal{F}|_{T^{\dagger}})}(1)
\cdot \OX_{\PP(\mathcal{F}|_{T^{\dagger}})}(1))=0$,
we deduce that $c_{1}(D)=0$.

We consider two cases, depending on whether the following exact sequence, 
obtained from restricting the exact sequence in Claim \ref{claim-st1}, splits or not:
$$0 \to L_1|_{{T^{\dagger}}} \to \mathcal{F}|_{{T^{\dagger}}} \to L_2|_{{T^{\dagger}}} \to 0. $$
We first consider the case where this sequence splits on the elliptic curve $T^{\dagger}$. 
Take a lattice $\Lambda$ of $\mathbb{C}$ such that $T^{\dagger} \cong \mathbb{C}/\Lambda $. 
Then, since $L_1|_{{T^{\dagger}}}$, $L_2|_{{T^{\dagger}}}$, and $D$ are numerically trivial line bundles, 
we can take the corresponding unitary representations
$$\rho_1,\rho_2,\rho_D \colon \Lambda \to {\rm{U}}(1).$$
Take a section 
$$s \in H^{0}(\PP(\mathcal{F}|_{T^{\dagger}}),  \OX_{\PP(\mathcal{F}|_{T^{\dagger}})}(a) + \varphi^{\dagger*}D) \cong 
H^{0}(T^{\dagger},  \mathrm{Sym}^a(L_1|_{T^{\dagger}} \oplus L_2|_{T^{\dagger}}) \otimes D)$$
whose divisor $\rm{div}(s)$ coincides with $C^{\dagger}$. 
By pulling back $s$ to the universal cover $\mathbb{C} \times \mathbb{P}^{1}$, 
the section $s$ can be identified with an element 
$$\sum_{i,j \geq 0, i+j=a} f_{ij}(z)u^i v^j \in \mathcal{O}(\mathbb{C})[u,v]$$ 
satisfying that 
$$f_{ij}(z+\lambda)=f_{ij}(z)\rho_1(\lambda)^i \rho_{2}(\lambda)^j \rho_D(\lambda) \text{ for any } \lambda \in \Lambda,$$
where $(z, [u : v])$ is a coordinate of $\mathbb{C} \times \mathbb{P}^1$. 
The Liouville theorem shows that $f_{ij}$ are constant functions on $\mathbb{C}$. 
This indicates that $C^\dagger$ is smooth and is the disjoint union of elliptic curves. 
Indeed,  the inverse image of $C^{\dagger}$ in $\mathbb{C} \times \mathbb{P}^{1}$ 
can be written as 
$$\{(z, [u : v])\in \mathbb{C} \times \mathbb{P}^1\,|\, \sum_{i,j \geq 0, i+j=a} f_{ij}u^i v^j =0 \},$$
and $C^\dagger$  is the quotient of this inverse image.

We consider the remaining case where the above exact sequence does not split. 
In this case, by considering the extension class, we can see that $L_1|_{T^{\dagger}}=L_2|_{T^{\dagger}}$ and 
that $\PP(\mathcal{F}|_{T^{\dagger}})$ is the same  as in \cite[Example 1.7]{DPS94}. 
By \cite[Example 1.7]{DPS94}, we can see that 
$c_1(\OX_{\PP(\mathcal{F}|_{T^{\dagger}})}(a))$
contains the only positive current associated with an effective curve 
whose support is 
$\PP(L_{2}|_{T^{\dagger}}) $.
This  indicates that
$C^{\dagger} =\PP(L_{2}|_{T^{\dagger}}) $
by $c_1(C^{\dagger})=c_1( \OX_{\PP(\mathcal{F}|_{T^{\dagger}})}(a))$, 
and thus $C^{\dagger}$ is an elliptic curve.

\smallskip

Consider the normalization $X_{N-1}^\dagger$ of the fiber product $X^\dagger \times_{X'} X_{N-1}$:
\[
\xymatrix{
X_{N-1}^\dagger \ar[d]_{\pi^\dagger} \ar[r]^{\nu^\dagger} 
& X_{N-1} \ar[d]^{\pi := \pi_{N-1}} \\
X^\dagger \ar[r]^{\nu} 
& X'
}
\]
We prove that $$\pi^{\dagger}\colon  (X_{N-1}^{\dagger}, E^\dagger:=\nu^{\dagger*}E) \to (X^{\dagger}, C^\dagger)$$ satisfies
all the conditions of Lemma \ref{divisorial_con} (which is proved later)
to conclude that it coincides with the blow-up of $X^\dagger$ along  $C^{\dagger}$.
Condition (0) of Lemma \ref{divisorial_con} is satisfied since $C^{\dagger}$ and $X^{\dagger}$ are smooth.
Additionally, Condition (3) is also satisfied
since  $X_{N-1}$ has isolated singularities and $\nu\colon X^{\dagger} \to X'$ is ramified along only $X'_{\sing}$
(which are finitely many points).
As was the projective case, the contraction $\pi\colon X_{N-1} \to X'$ satisfies Conditions (1), (2)
and both $-E$ and $-K_{X_{N-1}}$ are $\pi$-ample (see  \cite[Lemma 7.8]{HP15a} for (2)
and \cite{HP15a, HP15b, HP16}).
By noting that $\nu\colon X^{\dagger} \to X'$ is ramified along only $X'_{\sing}$,
we can see that $\pi^{\dagger}\colon (X_{N-1}^{\dagger}, E^\dagger) \to (X^{\dagger}, C^\dagger)$
also satisfies Conditions (1), (2) and both $-E^\dagger=-\nu^{\dagger *} E$ and 
$-K_{X_{N-1}^\dagger}=-\nu^{\dagger *} K_{X_{N-1}}$  on $X_{N-1}^\dagger$ are $\pi^\dagger$-ample
outside $\nu^{-1}(X'_{\sing})$.
To check that $-E^\dagger=-\nu^{\dagger *} E$ is $\pi^\dagger$-nef,
we take a fiber $F$ of $X_{N-1}^\dagger \to X^\dagger$.
Since $\nu^{\dagger } (F)$ is a curve contracted by $\pi \colon X_{N-1} \to X'=X_{N}$,
we have
$$
(-E^\dagger \cdot F)=(-\nu^{\dagger *} E \cdot F)=(-E \cdot \nu^{\dagger } (F))>0
$$
which implies that $-E^\dagger$ is $\pi^\dagger$-nef.
Hence, Conditions (4), (5) are satisfied. 
Lemma \ref{divisorial_con} shows
that $\pi^{\dagger}\colon (X_{N-1}^{\dagger}, E^\dagger) \to (X^{\dagger}, C^\dagger)$ is actually
the blow-up of $X^\dagger$ along  $C^{\dagger}$.

Now, let us compute intersection numbers and derive a contradiction.
By the formula $K_{X^\dagger_{N-1}}=\pi^{\dagger *} K_{X^\dagger}+E^\dagger$,
we have $$
(K_{X^\dagger_{N-1}}^2 \cdot \omega_{})=
(\pi^{\dagger*} K_{X^\dagger}^2 \cdot \omega)+
2 (\pi^{\dagger*} K_{X^\dagger}\cdot E^\dagger \cdot \omega)
+((E^\dagger)^2 \cdot\omega), 
$$
where $\omega$ is a K\"ahler form on $X^\dagger_{N-1}$.
The left-hand side is non-negative. 
Indeed, for $T_{\e} \in c_{1}(-K_{X_{N-1}}) + \e \{\omega_{N-1}\}$ in Proposition \ref{prop-intersection} (1),
the pull-back $\nu^{\dagger*}(T_{\e} )$  represents
$c_{1}(-K_{X^{\dagger}_{N-1}}) + \nu^{\dagger*}\e \{\omega_{N-1}\}$ and
is smooth outside a Zariski closed set of codimension $\geq 2$
since $\nu^{\dagger} \colon X^{\dagger}_{N-1} \to X_{N-1}$ is quasi-\'etale.
Thus, by noting that $X^{\dagger}_{N-1}$ is smooth,
we see that the Monge-Amp\`ere operator $\nu^{\dagger*}(T_{\e} )^{2}$ is well-defined.
This implies that $
(K_{X^\dagger_{N-1}}^2 \cdot \omega) \geq 0. 
$
Meanwhile, the right-hand side is negative. 
Indeed, the first term is zero by $K_{X^\dagger}^2=0$.
Furthermore, the second term  is also zero 
since $K_{X^\dagger}|_{C^\dagger}$ is numerically trivial 
by
$$
K_{X^\dagger}|_{C^\dagger}=(K_{X^\dagger} |_{\mathbb{P} ( \mathcal{F} |_{T^{\dagger}} ) }) |_{C^\dagger}
=K_{\mathbb{P} ( \mathcal{F} |_{T^{\dagger}} ) }|_{C^\dagger}=0.$$
Since $\OX(-E^\dagger)|_{E^\dagger}=\OX_{\PP(N_{C^\dagger/X^\dagger_{}})}(1)$  holds
and $N_{C^\dagger/X^\dagger_{}}$ is numerically flat,
we can see that for $\pi^{\dagger}|_{E^\dagger} \colon E^\dagger \to C^{\dagger}$, 
\begin{align*}
((E^\dagger)^2 \cdot \omega)
&=(\OX_{\PP(N_{C^{\dagger}/X^\dagger_{}})}(-1) \cdot \omega|_{E^\dagger})
\\ &=-\int_{C^\dagger} (\pi^\dagger|_{E^\dagger})_*( \omega|_{E^\dagger})<0.
\end{align*}
This is a contradiction.
\end{proof}
This finishes the proof of Theorem \ref{thm-pro}, 
and thus completes  the proof of Theorem \ref{theo-non-proj} 
under assuming that $\pi_{1}(X)=\{\id\}$ and $\dim R(X) =2$.
\end{proof}

In the following, we give the proofs of the two lemmas used in the proof.
Lemm \ref{lem-topo} is  an easy variant of  \cite[Lemma 4]{Wu22} and
Lemma \ref{divisorial_con} is  a K\"ahler counterpart of \cite[Proposition 1.2]{Tzi03}.

\begin{lemm} \label{lem-topo}
Let $X$ be a $($not necessarily compact$)$  analytic variety of dimension $3$ and
let $E$ be a vector bundle on $X$.
Assume that $X$ has isolated cyclic quotient singularities$:$
more precisely, near a singular point,  there exists $m \in \mathbb{Z}+$
such that $X \cong \mathbb{D}/\mu_m$,
where $\mathbb{D}$ is a polydisc centered at the origin and
the action of $\mu_m$ is given by
$$\e \cdot (z_0,z_1, z_2)=(\e^{p_0}z_0,\e^{p_1}z_1, \e^{p_2}z_2) \text{ for } \e \in \mu_m
$$
for some $p_i \in \ZZ$ with isolated singular point.
Then, the morphism
$H^1(X, E)\to H^1(X_\reg, E)$
induced by the restriction morphism is isomorphic.
\end{lemm}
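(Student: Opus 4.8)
The plan is to express $X_\reg$ as the complement of the finite singular set and to control the difference between $H^1(X,E)$ and $H^1(X_\reg,E)$ by local cohomology, whose vanishing will follow from the Cohen--Macaulayness of quotient singularities.

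Concretely, I would set $Z:=X_{\sing}=\{p_1,\dots,p_k\}$, a finite set by the isolated-singularities assumption, so that $X_\reg=X\setminus Z$. The restriction morphism in the statement sits in the long exact sequence of local cohomology
$$
\cdots \to H^i_Z(X,E) \to H^i(X,E) \to H^i(X_\reg,E) \to H^{i+1}_Z(X,E) \to \cdots,
$$
so it suffices to check that $H^1_Z(X,E)=H^2_Z(X,E)=0$. Since $Z$ is a finite union of points, $H^i_Z(X,E)=\bigoplus_{j=1}^k H^i_{\{p_j\}}(X,E)$, and because $E$ is locally free near each $p_j$, say of rank $r$, each summand is $H^i_{\{p_j\}}(X,\OX_X)^{\oplus r}$; in particular it depends only on the germ $(X,p_j)$.

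Then I would compute $H^i_{\{p_j\}}(X,\OX_X)$ on the local model $\mathbb{D}/\mu_m$. As $\mu_m$ acts linearly with the origin as its only fixed point, the quotient map $\pi\colon \mathbb{D}\to \mathbb{D}/\mu_m$ is finite with $\pi^{-1}(0)=\{0\}$, and taking $\mu_m$-invariants is exact over $\CC$; hence $H^i_{\{p_j\}}(X,\OX_X)\cong\big(H^i_{\{0\}}(\mathbb{D},\OX_{\mathbb{D}})\big)^{\mu_m}$. Since $\mathbb{D}$ is smooth of dimension $3$, the local cohomology of $\OX_{\mathbb{D}}$ at $0$ vanishes in degrees $\le 2$ (its local ring has depth $3$), and therefore $H^i_{\{p_j\}}(X,E)=0$ for $i\le 2$. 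Substituting into the long exact sequence yields $H^1(X,E)\cong H^1(X_\reg,E)$, as desired. Equivalently, one may bypass the explicit invariants by recalling that a quotient singularity is Cohen--Macaulay of dimension $3$, so that $\operatorname{depth}_{p_j}\OX_X=3$, hence $\operatorname{depth}_{p_j}E=3$ since $E$ is locally free, which directly forces $H^i_{\{p_j\}}(X,E)=0$ for $i<3$.

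The argument is essentially formal once the depth (Cohen--Macaulayness) input is granted, so I do not expect a genuine obstacle; the only points needing a word of care are the existence of the local cohomology long exact sequence and the compatibility $H^i_{\{p_j\}}(X,\OX_X)\cong\big(H^i_{\{0\}}(\mathbb{D},\OX_{\mathbb{D}})\big)^{\mu_m}$ in the analytic category, both of which are standard facts about coherent cohomology and finite group quotients.
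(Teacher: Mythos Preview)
Your proposal is correct and follows essentially the same route as the paper: the paper observes that quotient singularities are Cohen--Macaulay, so $\operatorname{depth}_x E=3$ for any $x$ since $E$ is locally free, and then invokes \cite[Theorem 1.14]{ST06} to conclude the isomorphism. Your local-cohomology long exact sequence simply unpacks what that cited theorem is doing, so the two arguments coincide in substance.
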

\begin{proof}
The germ of $X$ at any point $x$ is Cohen-Macaulay.
In particular, the depth of $\OX_{X,x}$ (i.e.,\,the length of maximal regular sequences contained in the maximal ideal) is equal to the Krull dimension of $\OX_{X,x}$.
Since $E$ is locally free, the depth of $E_x$ at any point $x$ is $3$.
By \cite[Theorem 1.14]{ST06}, the morphism
$H^1(X, E)\to H^1(X_\reg, E)$  is isomorphic.
\end{proof}

\begin{lemm}
\label{divisorial_con}
Let $f \colon  (Y, E) \to  (X, C)$ be a divisorial contraction between
normal K\"ahler spaces $X$ and $Y$ of dimension $3$ such that $E$ is an irreducible surface
and $f(E)=C$ is a curve.
Assume the following conditions$:$
\begin{itemize}
\item[(0)] $f(E)=C$ is a smooth curve and $X$ is smooth near $C$.
\item[(1)] $f\colon Y \to X$ is a projective morphism.
\item[(2)] A general fiber of $f|_E \colon E \to C$ is smooth, connected,  and contained in the regular locus of $E$.
\item[(3)] The singular locus of $Y$ is not dominant over $C$.
\item[(4)] $-E$ is a $f$-nef $\QQ$-Cartier divisor  on $Y$ and $f$-ample outside finitely many points of $X$.
\item[(5)] $-K_Y$ is a $\QQ$-Cartier divisor  on $Y$ and $f$-ample outside finitely many points of $X$.
\end{itemize}
Then, the contraction $f\colon Y \to X$ coincides with the blow-up $p\colon {\rm{Bl}}_C X \to X$ of $X$ along $C$.
\end{lemm}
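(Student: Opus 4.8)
The plan is to reduce the statement to a purely local computation near a general point of $C$ and then to propagate the conclusion along $C$ using the fact that the blow-up is characterized by a universal property relative to $f$-ample divisors. First I would work over a small open neighborhood $U$ of a general point $c\in C$. By Condition (0) the pair $(X,C)$ is a smooth curve inside a smooth three-fold, so $p\colon \mathrm{Bl}_C X\to X$ is a $\PP^1$-bundle over $C$ locally, with exceptional divisor $\PP(N_{C/X})$. By Condition (3) we may shrink $U$ so that $Y$ is smooth over $U$, and by Conditions (4) and (5) both $-E$ and $-K_Y$ are $f$-ample over $U$. Since $f$ is projective (Condition (1)) and $\rho(Y/X)=1$ for a divisorial contraction, $-E$ generates the relative Picard group over $U$ up to torsion; thus $-E$ (or a multiple) is $f$-very ample over $U$ after further shrinking, and a general fiber of $f|_E$ is $\PP^1$ by Condition (2) together with adjunction on the smooth three-fold $Y$. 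This pins down $f$ over $U$ as a smooth $\PP^1$-bundle, and comparing $-E|_{\text{fiber}}=\OX_{\PP^1}(1)$ with the exceptional divisor of the blow-up identifies $Y\to X$ with $\mathrm{Bl}_C X\to X$ over $U$.

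Next I would globalize. The two birational morphisms $f\colon Y\to X$ and $p\colon \mathrm{Bl}_C X\to X$ agree over the biholomorphic locus $X\setminus C$ and, by the previous paragraph, over a dense Zariski-open subset of $C$. Hence the induced bimeromorphic map $h\colon \mathrm{Bl}_C X\dashrightarrow Y$ over $X$ is an isomorphism in codimension one. Both $-E$ (on $Y$) and the exceptional divisor $-E_0$ of the blow-up are $f$-, resp.\ $p$-ample outside finitely many points of $X$ by Condition (4) and by the explicit description of the blow-up; since $\rho(Y/X)=\rho(\mathrm{Bl}_C X/X)=1$, the relative nef cones are half-lines, and $h$ must match the $f$-ample generator with the $p$-ample generator. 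A bimeromorphic map between normal varieties projective over $X$, which is an isomorphism in codimension one and pulls back a relatively ample divisor to a relatively ample divisor, is an isomorphism (this is the relative analogue of the standard fact that such a map is the identity; it follows from $Y=\mathrm{Proj}_X\bigoplus_{m\ge 0} f_*\OX_Y(-mE)$ and the corresponding formula for $\mathrm{Bl}_C X$, the two graded $\OX_X$-algebras being identified via $h$ away from codimension two and hence everywhere by normality and $S_2$-ness). This gives $Y\cong\mathrm{Bl}_C X$ over $X$.

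For the remaining finitely many bad points of $X$ over $C$ — where $Y$ may be singular (Condition (3)) or where $-E$, $-K_Y$ fail to be $f$-ample — I would argue that the identification already obtained extends across them. Indeed $Y$ and $\mathrm{Bl}_C X$ are both normal, both projective over $X$, and isomorphic over the complement of a finite subset of $C$, hence over the complement of a set of codimension $\ge 2$ in each; the isomorphism extends by Zariski's main theorem together with the fact that $\mathrm{Bl}_C X$ is smooth (so normal) along the whole exceptional divisor. Concretely, the graded algebra computation of the previous paragraph is insensitive to removing a codimension-two locus, so $\bigoplus_m f_*\OX_Y(-mE)\cong\bigoplus_m p_*\OX_{\mathrm{Bl}_C X}(-mE_0)$ as $\OX_X$-algebras, and taking $\mathrm{Proj}_X$ yields the global isomorphism.

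The main obstacle I anticipate is the globalization step across the finitely many special points of $X$: one must be careful that the relative graded algebra $\bigoplus_m f_*\OX_Y(-mE)$ is genuinely finitely generated and that $Y$ really is its relative $\mathrm{Proj}$, which uses that $f$ is a $\QQ$-factorial divisorial contraction with $\rho(Y/X)=1$ and Condition (4); this is exactly the point where the $\QQ$-Cartier hypotheses on $E$ and $K_Y$ and the projectivity of $f$ are indispensable. The local computation near a general point of $C$ is routine once Conditions (0)–(2) are in hand, and the comparison of nef cones is formal; it is the passage to the possibly-singular fibers of $f$ over the finitely many exceptional points of $C$ that requires the full strength of the hypotheses, and I would spend the bulk of the write-up there.
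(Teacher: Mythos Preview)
Your two–step outline (identify $f$ with the blow–up over a dense open of $C$, then propagate to all of $C$ via a small bimeromorphic map) is exactly the structure of the paper's proof, but your globalization step has a genuine gap.

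The problem is your repeated use of $\rho(Y/X)=1$. The lemma is not stated for an extremal contraction; in the paper it is applied to the base change $\pi^{\dagger}\colon X_{N-1}^{\dagger}\to X^{\dagger}$ of an MMP contraction by a quasi-\'etale cover, and no relative Picard rank hypothesis is checked (or true in general). Without $\rho(Y/X)=1$ you cannot promote ``$-E$ is $f$-nef and nontrivial'' to ``$-E$ is $f$-ample'', so your identity $Y=\mathrm{Proj}_X\bigoplus_{m}f_*\OX_Y(-mE)$ is unavailable, and the claim that $h$ ``pulls back a relatively ample divisor to a relatively ample divisor'' is not what the hypotheses give you: Condition~(4) only says $-E$ is $f$-\emph{nef}. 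Your $S_2$ extension of the graded algebras is also not justified, since $f_*\OX_Y(-mE)$ need not be reflexive.

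The paper repairs this by invoking \cite[Lemma 6.39]{KM98} directly: once $Y$ and $\mathrm{Bl}_C X$ are isomorphic in codimension one, take the $p$-ample divisor $-E_0$ on $\mathrm{Bl}_C X$; its strict transform on $Y$ is $-E$, which is $\QQ$-Cartier and $f$-nef by Condition~(4). That lemma then forces the small map to be an isomorphism, with no Picard–rank or Proj bookkeeping required. This is precisely where the ``$f$-nef'' half of Condition~(4)---as opposed to the ``$f$-ample outside finitely many points'' half you emphasize---is actually consumed.

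For the local step your adjunction sketch is workable but loose; the paper's route is cleaner and makes the role of Condition~(5) transparent: over the open set where $Y$ is smooth and $-E$, $-K_Y$ are $f$-ample one writes $\OX_E(-dE)=K_E+(-K_Y-(d+1)E)|_E$ and applies relative Kodaira vanishing to get $R^if_*\OX_E(-dE)=0$ for $i>0$, $d\geq 0$, whence $f_*\OX_Y(-dE)=\mathcal{I}_C^{\,d}$ by \cite[Lemma (3.32)]{Mor82}. Taking $\mathrm{Proj}_X$ identifies $Y$ with $\mathrm{Bl}_C X$ on that open set without any separate $\PP^1$-bundle or normal-bundle comparison.
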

\begin{proof}
The problem is local in $X$.  Thus, we may assume that $X$ is smooth.
We first reduce the problem to show that $f\colon Y \to X$ coincides with
$p\colon {\rm{Bl}}_C X \to X$ outside finitely many points of $X$.
If $f\colon Y \to X$ coincides with the blow-up outside finitely many points of $X$,
then we can find
a bimeromorphic map
$$\text{
$\pi\colon Y \dashrightarrow {\rm{Bl}}_C X$ over $C $ that is isomorphic in codimension $1$
}
$$
by noting that $f\colon E \to C$ is an equidimensional fibration.
We conclude that $\pi\colon Y \dashrightarrow {\rm{Bl}}_C X$  is actually an isomorphism
by checking the assumptions of \cite[Remark 6.37, Lemma 6.39]{KM98} are satisfied.
For any $f$-ample Cartier divisor $A$ on $Y$,
the Weil divisor $\pi_{*}A$ corresponding to  $A$ via $\pi\colon Y \dashrightarrow {\rm{Bl}}_C X $
is a Cartier divisor on ${\rm{Bl}}_C X$ (since ${\rm{Bl}}_C X$ is smooth).
For the $p$-ample exceptional divisor $-B$ on ${\rm{Bl}}_C X$,
the Weil divisor $\pi^{-1}_{*}B$  is
$-E$, which is $f$-nef and $\QQ$-Cartier by assumption.
Thus, \cite[Lemma 6.39]{KM98}  shows that  $\pi\colon Y \dashrightarrow {\rm{Bl}}_C X$ is an isomorphism.

We finally show that $f\colon Y \to X$ coincides with the blow-up of $C$ outside finitely many points of $X$.
By Condition (4),
it is sufficient to show that for any $d \geq 0$,
$$f_*\OX(-dE)=\mathcal{I}_C^d$$
over the Zariski open set where $-E$ is $f$-ample.
We may assume that $Y$ is smooth by  Condition (3).
Furthermore, we may assume that
the restriction $f|_E\colon E \to C$ (on a non-empty Zariski open set of $E$)
is a smooth morphism between smooth spaces.
Since $\OX_E(-dE)=K_{E}-K_Y|_E-(d+1)E|_E$ holds and $-K_Y$ is $f$-ample,
the relative Kodaira vanishing theorem shows that
$$
R^if_*(\OX_E(-dE))=0   \text{ for any }i>0, d \geq 0,
$$
which finishes the proof by \cite[Lemma (3.32)]{Mor82}.
\end{proof}

We finally check that Theorem \ref{theo-main} follows from Theorem \ref{theo-non-proj}
by using almost the same arguments as in \cite[Theorem 1.4]{CH19}.

\begin{prop}\label{prop-main}
Theorem \ref{theo-main} follows from Theorem \ref{theo-non-proj}.
\end{prop}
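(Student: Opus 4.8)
The plan is to argue by cases according to whether $X$ is projective and, when it is not, according to $\dim R(X)$, following the reductions recorded in Subsection~\ref{subsec1-2}. If $X$ is projective, Conjecture~\ref{conj-main} for $X$ is precisely Cao--H\"oring's theorem \cite{Cao19, CH19}, so there is nothing to prove. If $X$ is non-projective, those reductions show $\dim R(X)\in\{2,3\}$: indeed $\dim R(X)=0$ makes $X$ rationally connected, hence projective, and $\dim R(X)=1$ forces $h^{2}(X,\OX_X)=h^{0}(X,\Omega_X^{2})=0$, hence $X$ projective, each contradicting non-projectivity. When $\dim R(X)=3$ the manifold $X$ is non-uniruled, so $K_X$ is pseudo-effective by \cite[Corollary 1.2]{Bru06}, which together with the nefness of $-K_X$ gives $c_1(X)=0$; then $\varphi=\mathrm{id}\colon X\to X$ is a locally constant fibration with $c_1=0$ and rationally connected (point) fibres, so Conjecture~\ref{conj-main} holds for $X$.

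The substantive case is $\dim R(X)=2$, where I would invoke Theorem~\ref{theo-non-proj} to produce a finite \'etale cover $\widetilde{X}\to X$ that is one of the three listed types. The type with $c_1=0$ cannot occur here: pullback by a finite cover is injective on $H^{2}(-,\QQ)$, so it would force $c_1(X)=0$, hence $X$ non-uniruled and $\dim R(X)=3$, a contradiction. Thus $\widetilde{X}$ is either a product $S\times\PP^1$ with $S$ a K3 surface, or a projective bundle $\PP(\mathcal{F})$ of a numerically flat rank-$2$ bundle over a $2$-dimensional torus $T$. In both cases $\widetilde{X}$ carries a locally constant $\PP^1$-fibration $\psi\colon\widetilde{X}\to B$ (the projection to $S$, respectively to $T$) with $B$ a compact K\"ahler surface satisfying $c_1(B)=0$; this $\psi$ is the MRC fibration of $\widetilde{X}$.

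It then remains to descend $\psi$ to $X$. First I would replace $\widetilde{X}$ by the Galois closure of $\widetilde{X}\to X$, which stays in the same two types: $S\times\PP^1$ is simply connected, and every connected finite \'etale cover of $\PP(\mathcal{F})\to T$ is of the form $\PP(\mathcal{F}|_{T'})\to T'$ for an isogeny $T'\to T$, since $\pi_1(\PP(\mathcal{F}))\cong\pi_1(T)$ and numerical flatness is preserved by pullback. So I may assume $\widetilde{X}\to X$ is Galois with group $G$ acting freely on $\widetilde{X}$. Since the MRC fibration is canonical, $G$ acts on $\widetilde{X}$ compatibly with an action on $B$, making $\psi$ equivariant. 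A non-trivial finite-order automorphism of $\PP^1$ has a fixed point, so if some $g\in G\smallsetminus\{\mathrm{id}\}$ fixed a point of $B$ it would fix a point on the corresponding fibre $\cong\PP^1$, contradicting the freeness of the $G$-action on $\widetilde{X}$; hence $G$ acts freely on $B$. Therefore $Y:=B/G$ is a smooth compact K\"ahler surface with $c_1(Y)=0$, the induced map $\varphi\colon X=\widetilde{X}/G\to Y=B/G$ is a fibration whose fibres are $\PP^1$ (in particular rationally connected), and the base change of $\varphi$ along the finite \'etale cover $B\to Y$ recovers $\psi\colon\widetilde{X}\to B$ under the identification $\widetilde{X}\cong X\times_Y B$. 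Since $\psi$ is locally constant, the descent property of locally constant fibrations (see \cite{MW}) shows that $\varphi$ is locally constant as well, so $\varphi\colon X\to Y$ witnesses Conjecture~\ref{conj-main} for $X$.

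The step I expect to be the main obstacle is this last descent: one must check that a $G$-equivariant locally constant structure on $\psi\colon\widetilde{X}\to B$ yields genuinely locally constant---not merely locally trivial---transition data for $\varphi$ over $Y$, for which I would lean on the formalism of locally constant fibrations developed in \cite{MW}. A secondary point requiring care is verifying that $G$ acts biholomorphically on the smooth surface $B$ itself, rather than only on the MRC base up to bimeromorphism; this follows from the functoriality of the MRC fibration together with the explicit structure of $\widetilde{X}$ in the two remaining cases.
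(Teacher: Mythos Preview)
Your reduction to the non-projective case with $\dim R(X)=2$ matches the paper (and Subsection~\ref{subsec1-2}), and your Galois descent construction of the smooth $\PP^1$-fibration $\varphi\colon X\to Y$ with $c_1(Y)=0$ is correct and gives a genuine alternative to the paper's route. The paper instead passes through foliations: it takes the saturated integrable subsheaf $\mathcal{F}\subset T_X$ whose general leaf is an MRC fibre, observes that $\mathcal{F}$ is regular because it is so on the \'etale cover $\widetilde X$ furnished by Theorem~\ref{theo-non-proj}, and then invokes \cite[Corollary 2.11]{Hor07} to obtain a smooth morphism $\varphi\colon X\to Y$ with $T_{X/Y}=\mathcal{F}$; Fischer--Grauert and Proposition~\ref{push-forward-formula} then yield the locally trivial $\PP^1$-bundle with $c_1(Y)=0$. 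Your descent argument reaches the same intermediate conclusion by a more elementary path (no foliation theory), at the cost of the extra bookkeeping with Galois closures and the fixed-point lemma on $\PP^1$.

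The gap is in your final step. There is no ``descent property of locally constant fibrations'' in \cite{MW} of the kind you invoke, and the issue is real: $G$-equivariance of $\psi$ gives, for each $g\in G$, an isomorphism $\widetilde X_b\xrightarrow{\sim}\widetilde X_{gb}$ of fibres, but after trivialising via the locally constant structure on $\widetilde X\to B$ this becomes a holomorphic family of elements of $\mathrm{PGL}_2$ parametrised by the universal cover $\widetilde B$, and there is no a priori reason this family is constant (in the torus case $\widetilde B=\CC^2$ is non-compact, so the ``compact source, affine target'' argument that works for the K3 case does not apply). You correctly flag this as the main obstacle, but you do not resolve it.

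The paper sidesteps this entirely. Once one has the smooth $\PP^1$-fibration $\varphi\colon X\to Y$ with $c_1(Y)=0$ (which you also obtain), one notes that $\varphi$ is a Mori fibre space, so $-K_X$ is $\varphi$-ample; then Theorem~\ref{theo-flat} applied directly to $\varphi$ on $X$ shows that the direct images $\varphi_*(pL)$ are numerically flat for $L=-K_X$ and $p\gg 0$ (using \cite[Main Theorem]{Wu22b} since $Y$ is smooth), and \cite[Proposition 2.5, Remark 2.6\,(b)]{MW} then gives that $\varphi$ is locally constant. This works on $X$ itself and needs no descent. If you want to repair your argument, this is the cleanest patch: keep your Galois construction of $\varphi\colon X\to Y$, then finish with Theorem~\ref{theo-flat} rather than attempting to descend the locally constant structure.
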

\begin{proof}
Let $X$ be a non-projective compact K\"ahler three-fold with nef anti-canonical bundle.
We first show that $X$ admits a locally trivial MRC fibration $X \to Y$ onto a smooth surface with $c_1(Y)=0$.
The manifold $X$ admits a finite \'etale cover $X'$, one of the lists in Theorem \ref{theo-non-proj}.
If $c_{1}(X')=0$, then $c_{1}(X)=0$; hence we may assume that $X'$ admits a non-trivial locally constant MRC fibration.
Let $\mathcal{F}\subset T_X$ be the (unique) saturated integrable subsheaf such that
the $\mathcal{F}$-leaf through a very general point $x \in X$ is a fiber of the MRC-fibration.
By Theorem \ref{theo-non-proj}, the sheaf $\mathcal{F}$ is a regular foliation that is invariant under passing to finite \'etale covers.
Hence, by  \cite[Corollary 2.11]{Hor07},
there exists a smooth morphism $\varphi\colon  X \to Y$ such that $T_{X/Y}=\mathcal{F}$.
Since the general fiber is $\PP^1$, the fibration $\varphi \colon  X \to Y$ is locally trivial by the Firscher-Grauert theorem.
By Proposition \ref{push-forward-formula}, the line bundle $-K_Y$ is pseudo-effective.
Since $Y$ is not uniruled, we have $c_1(Y)=0$.

The fibration $\varphi\colon X \to Y$ is the MFS of $X$; hence $-K_{X}$ is $\varphi$-ample line bundle.
By Theorem \ref{theo-flat}, since $Y$ is smooth, we can find a $\varphi$-ample line bundle $B$ such that
$\varphi_{*}(pB)$ is numerically flat for $1 \ll p  \in \mathbb{Z}_{+}$.
Note that the numerically flatness follows from \cite[Main Theorem]{Wu22b}. 
Then, we see that  $\varphi \colon  X \to Y$  is locally constant by \cite[Proposition 2.5]{MW}.
Since $Y$ is a compact K\"ahler,  Condition (2) in \cite[Proposition 2.5]{MW} is automatically satisfied
(see \cite[Remark 2.6 (b)]{MW}). 
\end{proof}





\newpage

\end{document}